\documentclass[12pt,reqno]{amsart}
\usepackage{}
\usepackage{amsmath}
\usepackage{mathrsfs}
\usepackage{amssymb}
\usepackage{amsthm}
\usepackage{mathrsfs}
\usepackage[centertags]{amsmath}
\usepackage{amsfonts}
\usepackage[numbers,sort&compress]{natbib}
\usepackage{color}
\usepackage{extarrows}
\usepackage{fullpage}
\usepackage[colorlinks=true,  linkcolor=blue, citecolor=blue]{hyperref}
\input amssym.def
\input amssym.tex

\date{}

\newtheorem{Theorem}{Theorem}[section]

\newtheorem{Lemma}{Lemma}[section]
\newtheorem{Remark}{Remark}[section]

\newcommand\R{\mbox{\bf R}}
\newcommand\T{\mbox{\bf T}}

\newcommand\z{\mbox{\bf z}}
\newcommand\SR{\mbox{\scriptsize\bf R}}

\newcommand{\definition}{{\lower .5ex
  \hbox{$\>\>\stackrel{\triangle}{=}\>\>$} }}
\newcommand\supp{\mathop{\rm supp}}



\allowdisplaybreaks
\begin{document}

\baselineskip=22pt
\thispagestyle{empty}

\begin{center}
{\Large \bf  Spatial decay and nonlinear smoothing of  the  generalized  Ostrovsky equation}\\[1ex]

{\quad Xiangqian Yan\footnote{Email: yanxiangqian213@126.com}$^a$, Wei Yan\footnote{Email: 011133@htu.edu.cn}$^b$,
 Meihua Yang\footnote{Email: yangmeih@hust.edu.cn}$^{d*}$}\\[1ex]

{$^a$School of Mathematics,
 South China University of Technology,}\\
 {Guangzhou, Guangdong 510640, P. R. China}\\[1ex]

{$^b$College of Mathematics and Statistics, Henan Normal University,}\\
{Xinxiang, Henan 453007,   China}\\[1ex]

{$^c$School of Mathematics and Statistics, Huazhong University of Science and Technology, }\\
{Wuhan, Hubei 430074,  China}\\[1ex]

\end{center}

\bigskip
\bigskip

\noindent{\bf Abstract.}  This paper is devoted to studying the  generalized
Ostrovsky equation
\begin{eqnarray*}
      u_{t}-\beta\partial_{x}^{3}u-\gamma\partial_{x}^{-1}u+\frac{1}{k+1}(u^{k+1})_{x}=0,k\geq5
    \end{eqnarray*}
    with $\beta<0,\gamma>0$. Firstly, by using  the  density theorem in the mixed Lebesgue spaces,
    we prove that $X_{s,b}\hookrightarrow C(\R;H^{s}(\R))\hookrightarrow C(\R;L_{x}^{\infty})$
    with $s>1/2,b>1/2.$
    Secondly, we present a new proof of the convergence problem of linear
     Ostrovsky equation, which is slightly different from the proof of Theorem 1.1
      (Convergence problem of Ostrovsky equation with rough data and random data,
      Indiana Univ. Math. J. 71(2022), 1897-1921.)
Thirdly, we investigate the pointwise convergence problem  of the generalized
Ostrovsky equation. Fourthly, for the solution  $u$ to the Cauchy  problem for the  generalized
Ostrovsky equation, we prove that $u=u_{1}+u_{2},t\in[-\delta,\delta]$, and $u_{2}$ possesses
better regularity than $u$, where $u_{1}$ is the linear part of $u$ and $u_{2}$ is the
nonlinear integral part. Fifthly,
we investigate the nonlinear smoothing and the uniform convergence problem
of the generalized
Ostrovsky equation. Finally,  when  data $f$ belongs to $H^{s}(\R)(s>\frac{1}{2}-\frac{2}{k+1},k\geq6)$ and
$\lim\limits_{|x|\rightarrow{\infty}}f=0$ and
$\mathscr{F}_{x}(U(t)f)\in L^{1}(\R),$ for $t\in [-\delta,\delta],$
we prove that $\lim\limits_{|x|\rightarrow{\infty}}u=0$.
The key ingredients are high-low frequency technique,  maximal  function estimates related to
 low frequency and  some Strichartz estimates which
can be proved with the aid of  the Stein complex interpolation Theorem.

\noindent {\bf Keywords}: Generalized Ostrovsky equation; The  density theorem in the mixed Lebesgue spaces;
     Nonlinear smoothing; Spatial decay

\medskip
\noindent {\bf Corresponding Author:} Meihua Yang

\medskip
\noindent {\bf Email Address:} yangmeih@hust.edu.cn

\bigskip
\noindent {\bf MSC2020-Mathematics Subject Classification}:  35G25
\bigskip

\leftskip 0 true cm \rightskip 0 true cm

\newpage{}

\begin{center}
{\Large \bf  Spatial decay and nonlinear smoothing of  the  generalized  Ostrovsky equation}\\[1ex]

{\quad Xiangqian Yan\footnote{Email: yanxiangqian213@126.com}$^a$, Wei Yan\footnote{Email: 011133@htu.edu.cn}$^b$,
 Meihua Yang\footnote{Email: yangmeih@hust.edu.cn}$^{d*}$}\\[1ex]

{$^a$School of Mathematics,
 South China University of Technology,}\\
 {Guangzhou, Guangdong 510640, P. R. China}\\[1ex]

{$^b$College of Mathematics and Statistics, Henan Normal University,}\\
{Xinxiang, Henan 453007,   China}\\[1ex]

{$^c$School of Mathematics and Statistics, Huazhong University of Science and Technology, }\\
{Wuhan, Hubei 430074,  China}\\[1ex]

{$^d$School of Mathematics and Statistics, Huazhong University of Science and Technology, }\\
{Wuhan, Hubei 430074,  China}\\[1ex]

\end{center}

\noindent{\bf Abstract.}   This paper is devoted to studying the  generalized
Ostrovsky equation
\begin{eqnarray*}
      u_{t}-\beta\partial_{x}^{3}u-\gamma\partial_{x}^{-1}u+\frac{1}{k+1}(u^{k+1})_{x}=0,k\geq5
    \end{eqnarray*}
    with $\beta<0,\gamma>0$. Firstly, by using  the  density theorem in the mixed Lebesgue spaces,
    we prove that $X_{s,b}\hookrightarrow C(\R;H^{s}(\R))\hookrightarrow C(\R;L_{x}^{\infty})$
    with $s>1/2,b>1/2.$
    Secondly, we present a new proof of the convergence problem of linear
     Ostrovsky equation, which is slightly different from the proof of Theorem 1.1
      (Convergence problem of Ostrovsky equation with rough data and random data,
      Indiana Univ. Math. J. 71(2022), 1897-1921.)
Thirdly, we investigate the pointwise convergence problem  of the generalized
Ostrovsky equation. Fourthly, for the solution  $u$ to the Cauchy  problem for the  generalized
Ostrovsky equation, we prove that $u=u_{1}+u_{2},t\in[-\delta,\delta]$, and $u_{2}$ possesses
better regularity than $u$, where $u_{1}$ is the linear part of $u$ and $u_{2}$ is the
nonlinear integral part. Fifthly,
we investigate the nonlinear smoothing and the uniform convergence problem
of the generalized
Ostrovsky equation. Finally,  when  data $f$ belongs to $H^{s}(\R)(s>\frac{1}{2}-\frac{2}{k+1},k\geq6)$ and
$\lim\limits_{|x|\rightarrow{\infty}}f=0$ and
$\mathscr{F}_{x}(U(t)f)\in L^{1}(\R),$ for $t\in [-\delta,\delta],$
we prove that $\lim\limits_{|x|\rightarrow{\infty}}u=0$.
The key ingredients are high-low frequency technique,  maximal  function estimates related to
 low frequency and  some Strichartz estimates which
can be proved with the aid of  the Stein complex interpolation Theorem.
\leftskip 0 true cm \rightskip 0 true cm

\newpage

\baselineskip=20pt

\bigskip
\bigskip

\tableofcontents

\section{Introduction}\label{sec1}

\setcounter{Theorem}{0} \setcounter{Lemma}{0}

\setcounter{section}{1}

\subsection{Background of pointwise convergence of linear dispersive equations}

\indent In this paper, we investigate the following generalized Ostrovsky equation
\begin{eqnarray}
&&u_{t}-\beta\partial_{x}^{3}u-\gamma \partial_{x}^{-1}u+\frac{1}{k+1}(u^{k+1})_{x}=0,\label{1.01}\\
&&u(x,0)=f(x),\label{1.02}
 \end{eqnarray}
where $\beta<0,\gamma>0$ and
 \begin{eqnarray*}
&&\partial_{x}^{-1}u(x)=\frac{1}{2}\left(\int_{-\infty}^{x}u(y)dy-\int_{x}^{\infty}u(y)dy\right).
 \end{eqnarray*}

\indent Equation \eqref{1.01} was introduced by Levandosky and Liu in \cite{SL}, and the stability
 of solitary wave solutions was investigated. Recently, Yan and Yan \cite{YY} proved that (\ref{1.01}) is locally well-posed in $H^{s}(\R)(s>\frac{1}{2}-\frac{2}{k})$
with $k\geq5,\beta\gamma<0.$ 

Certain equations of this class have a direct relation to physical systems. In particular, when $k=2$,  (\ref{1.01}) was Ostrovsky  equation
\begin{eqnarray}
      u_{t}-\beta\partial_{x}^{3}u-\gamma\partial_{x}^{-1}u+\frac{1}{2}(u^{2})_{x}=0,\gamma>0,\label{1.03}
    \end{eqnarray}
which was introduced by Ostrovsky in \cite{O}
as a model for weakly nonlinear long waves, by taking into account of the Coriolis force, to describe
 the propagation
of surface waves in the ocean in a rotating frame of reference. When $\beta>0$,  (\ref{1.03}) is the
 Ostrovsky equation with positive dispersion.
When $\beta<0$,  (\ref{1.03}) is the Ostrovsky equation with negative dispersion.

By using techniques
developed in \cite{KPV} and conservation laws, Linares and Milan\'{e}s \cite{LM} proved that the Cauchy problem for the Ostrovsky equation \eqref{1.03}
is locally well-posed in $X_{s}=\{f\in H^{s}(\R): \partial_{x}^{-1}f\in L^{2}(\R)\}$ with $s>\frac{3}{4}$,
and globally well-posed in $X_{1}$ when $\beta\cdot\gamma>0$.
By  employing the Fourier restriction norm method introduced in
 \cite{Bourgain-S,Bourgain-GAFA93}, Huo and Jia \cite{HJ}
proved local well-posedness of \eqref{1.03} in $\tilde{H}^{s}(\R)$, defined via the norm $\|u\|_{\tilde{H}^{s}(\R)}
=\|u\|_{H^{s}(\SR)}+\left\|\mathscr{F}_{x}^{-1}
\left(\frac{\mathscr{F}_{x}u(\xi)}{\xi}\right)\right\|_{H^{s}(\SR)}$
for $s\geq -\frac{1}{8}$ and globally well-posed in
  $\tilde{H}^{0}(\R)$. Here
 $\mathscr{F}_{x}, \mathscr{F}_{x}^{-1}$ denote the Fourier transform and its inverse in the spatial variable, respectively.
   Gui and Liu \cite{GL} proved that
 the Cauchy problem for \eqref{1.03} with positive dispersion
is locally well-posed in $H^{s}(\R)(s\geq -\frac{7}{12})$ and
  globally well-posed in $L^{2}(\R)$.
Isaza and Mej\'{i}a \cite{IM,IM2009} proved that the  Cauchy
  problem  for  \eqref{1.03} is
locally well-posed in $H^{s}(\R)(s>-\frac{3}{4})$, and the
 ill-posedness in $H^{s}(\R)$ for $s<-\frac{3}{4}$.
 Tsugawa \cite{T} established the local well-posedness  of  the Cauchy problem for \eqref{1.03}
   in some anisotropic Sobolev space
  $H^{s,a}(\R)$ with $s>-\frac{a}{2}-\frac{3}{4}$ and $0\leq a\leq 1$.
  By using Besov-type spaces and some useful algebraic identities, Li and
   his coauthors \cite{LHY,YLHD} proved that
   the  Cauchy  problem for  (\ref{1.03}) is locally well-posed
    in $H^{-\frac{3}{4}}(\R)$. 

Now we recall the research history of pointwise convergence problem.
Carleson \cite{Carleson} firstly investigated the pointwise convergence problem of  one
dimensional Schr\"odinger equation in $H^{s}(\R)(s\geq 1/4)$.
 Some  authors have investigated  the pointwise
 convergence problem of the  Schr\"odinger equation  in   dimensions $n\geq2$
  \cite{Bourgain1992,Bourgain2016,CLV,Cowling,DK,DG,Du,DGZ,DZ,GS2008,Lee,
LR2015,LR2017,LR2019, MVV,RVV,RV,Shao,S,Vega}.  Dahlberg and Kenig \cite{DK}
 showed that $s\geq \frac{1}{4}$ is
the necessary condition in $H^{s}(\R^{n})$ for the pointwise
convergence problem of the Schr\"odinger equation in any dimension. Sj\"olin \cite{S}
and Vega \cite{Vega} independently showed that the pointwise
 convergence problem of the  Schr\"odinger equation  holds in $H^{s}(\R^{n})(s>\frac{1}{2})$ in  all  dimensions.
The sufficient condition  for the pointwise
 convergence problem of the  Schr\"odinger equation was improved by Bourgain \cite{Bourgain1995},
  Moyua-Vargas-Vega \cite{MVV}, Tao-Vargas \cite{TV} and Lee
\cite{Lee}. In 2013, Bourgain\cite{Bourgain2013}  showed that the pointwise
 convergence problem of the  Schr\"odinger equation  holds in $H^{s}(\R^{n})(s>\frac{1}{2}-\frac{1}{4n})$.
Improved counterexamples are presented by Luc$\grave{a}$-Rogers \cite{LR2015,LR2017} and
 Demeter-Guo \cite{DG}. In 2016, Bourgain \cite{Bourgain2016}
  presented the counterexample  to show
 that the pointwise convergence problem requires  $s\geq\frac{n}{2(n+1)}$. In 2019, Luc$\grave{a}$
  and  Rogers \cite{LR2019} presented a different example  with ergodic arguments, which shows that
 the pointwise convergence problem of Schr\"odinger
 equation requires  $s\geq\frac{n}{2(n+1)}$.
Du et al. \cite{DGL} proved
 that
 the pointwise convergence problem
  of two dimensional
  Schr\"odinger
 equation in $H^{s}(\R^2)(s>\frac{1}{3})$ can hold with the aid of decoupling and polynomial partitioning.
 Du and Zhang \cite{DZ}
  proved that     the
  pointwise convergence problem of $n$ dimensional Schr\"odinger
 equation in $H^{s}(\R^n)(s>\frac{n}{2(n+1)},n\geq3)$ can hold.
 Miao et al. \cite{MYZ2015}  established an improved maximal inequality for $2D$
 fractional order Schr\"odinger
operators and Maio et al. \cite{MZZ2015}  established  the maximal estimates for Schr\"odinger
 equation with inverse-square potential. Moreover,
 some authors have studied the  pointwise
 convergence  of the   Schr\"odinger equation
  on the torus $\mathrm{\T}^{n}$ \cite{MV,CLS,EL}. Wang and Zhang \cite{WZ} studied the  pointwise
   convergence of solutions to the Schr\"odinger equation on manifolds. Recently, Yan et al. \cite{YZY}
    studied the  pointwise convergence of solutions to the
 Schr\"odinger equation in Fourier-Lebesgue space with rough data and random data.
 Kenig et al.
   have investigated the pointwise convergence problem of KdV equation in
    $H^{s}(\R)(s\geq \frac{1}{4})$ \cite{KPV1991,KPVCPAM}.
 Associated to the wave equation, Rogers and Villarroya \cite{RV} have proved that
$\frac{1}{2}\left[e^{it\sqrt{-\Delta}}+e^{-it\sqrt{-\Delta}}\right]
f\longrightarrow f$ with $f\in H^{s}(\R^{n})$
if and only if $s>\frac{n}{n+1}$. Recently,  Yan et al. \cite{YZDY} proved
 that $s\geq\frac{1}{4}$ is
the necessary and sufficient condition for the pointwise convergence problem
 of the linear Ostrovsky equation.

\subsection{Background of  pointwise convergence of nonlinear dispersive equations}
Compaan et al. \cite{CLS} studied the pointwise convergence of nonlinear
 Schr\"odinger equation
 with the aid of the Fourier restriction norm method. By using  the  dyadic
   mixed Lebesgue spaces, Linares and  Ramos \cite{LR}
studied the pointwise convergence of generalized Zakharov-Kuznetsov equation.

\subsection{Background of nonlinear smoothing and  uniform convergence of nonlinear dispersive equations}
Now we explain the nonlinear smoothing of nonlinear dispersive equations.
The nonlinear
 part of the solution in the integral  form
is  more regular  than the linear part.
For the nonlinear smoothing of nonlinear dispersive equations, we refer the readers
 to \cite{BS,Correia2021,CLS,CS2020,LS,ET-MRL,ET-IMRN,ET-APDE, YWY, YLHHY}.

\subsection{Motivation}

Tao    has  proved that   $X_{s,b}\hookrightarrow L^{\infty}(\R;H^{s}(\R))$
 with $s\in \R, b>\frac{1}{2}$
 in  Corollary 2.10 of \cite{Tao}.     It is obviously proved that $H^{s}(\R)\hookrightarrow L^{\infty}(\R)$  with $s>1/2.$

{\bf Question 1:     Whether it is possible to prove that
$X_{s,b}(\R^{2})\hookrightarrow C(\R;H^{s}(\R))\hookrightarrow C(\R; L^{\infty}(\R))$
 with $s>1/2,b>1/2$?}

By using high-low frequency technique and the maximal function estimates,
Yan et al. \cite{YZDY} proved that
\begin{eqnarray}
\lim\limits_{t\rightarrow0}U(t)f=f\label{1.04}
\end{eqnarray}
for almost everywhere $x\in \R.$

{\bf Question 2:     Whether it is possible to prove (\ref{1.04})
for almost everywhere $x\in \R $  with a  slightly different method?}

By using truncated frequency technique and Fourier restriction norm method,
Compaan et al. \cite{CLS} established the pointwise convergence of cubic Schr\"odinger equation.

{\bf Question 3:     Whether it is possible to establish
 the pointwise convergence of (\ref{1.01})-(\ref{1.02})
  with a  slightly different method?}

By using the Fourier restriction norm method,
Compaan et al. \cite{CLS} studied the nonlinear smoothing of
 cubic Schr\"odinger equation.

{\bf Question 4:     Whether it is possible to establish nonlinear
 smoothing of (\ref{1.01})-(\ref{1.02})?}

By using the Fourier restriction norm method, 
Compaan et al. \cite{CLS} studied the uniform convergence of cubic
 Schr\"odinger equation. More precisely,
Compaan proved that
\begin{eqnarray}
\lim\limits_{t\longrightarrow 0}\left\|u-e^{it \partial_{x}^{2}}u_{0}\right\|_{L_{x}^{\infty}}=0,\label{1.05}
\end{eqnarray}
where $u$ is the solution to cubic cubic Schr\"odinger equation.

{\bf Question 5:     Whether it is possible to establish uniform
 convergence of (\ref{1.01})-(\ref{1.02})?}

When data $\lim \limits_{|x|\longrightarrow\infty}u_{0}=0,$ if $\mathscr{F}_{x}u_{0}(\xi)\in L^{1}$,
then it is easily checked that  $\lim\limits_{|x|\longrightarrow +\infty}U(t)u_{0}=0,$
for the detail, we refer the readers to (\ref{9.011}).

{\bf Question 6:     Whether it is possible to prove
  $\lim \limits_{|x|\longrightarrow\infty}u=0,$  where $u$ is the
   solution to (\ref{1.01})-(\ref{1.02})?}

\subsection{Main Contents}

   In this paper,   we plan to study the Cauchy  problem for
     (\ref{1.01})-(\ref{1.02}) with $\beta<0,\gamma>0$.
Firstly, by using  the  density theorem in the mixed Lebesgue spaces,
 we prove that $X_{s,b}\hookrightarrow C(\R;H^{s}(\R))\hookrightarrow C(\R;L_{x}^{\infty})$
    with $s>1/2,b>1/2.$
Secondly, we present a new proof of the pointwise convergence problem
 of linear Ostrovsky equation.
Thirdly, by using  the idea slightly different from \cite{CLS} and
 using the high-low frequency technique,
 we investigate the pointwise convergence problem of the generalized
Ostrovsky equation (\ref{1.01})-(\ref{1.02}). Fourthly, following the idea
 of \cite{LS,CLS} and using
 the high-low frequency technique, for the solution  $u=u_{1}+u_{2},
 t\in[-\delta,\delta]$ of (\ref{1.01})-(\ref{1.02}),
  we prove that $u_{2}$ possesses  better regularity than $u$, where
  $u_{1}$ is the linear part of $u$ and $u_{2}$ is the
nonlinear integral part.  Fifthly,
we investigate the nonlinear smoothing and the uniform convergence problem
of the generalized
Ostrovsky equation. Finally,  when  data $f$ belongs
 to $H^{s}(\R)(s>\frac{1}{2}-\frac{2}{k+1})$ and
$\lim\limits_{|x|\rightarrow{\infty}}f=0$ and
$\mathscr{F}_{x}(U(t)f)\in L^{1}(\R),$ for $t\in [-\delta,\delta],$
we prove that $\lim\limits_{|x|\rightarrow{\infty}}u=0$.

The main difficulty is that the phase function possesses the zero singular point.

Without loss of generality, we assume that $\beta=-1$ and $\gamma=1$.

We give some notations before presenting the main results. Denote
 by $|A|$ the Lebesgue measure of $A.$
$0<\epsilon\ll1$ means that $\epsilon>0$ sufficiently approaches the zero.
$a\sim b$ means that there exists two positive constants $C_{1},C_{2}$
such that $C_{1}|a|\leq |b|\leq C_{2}|a|.$ We denote by $\mathscr{S}$ the rapidly
decreasing  function spaces and by $\mathscr{S}^{\prime}$ the slowly increasing function spaces.
We define $\langle\cdot\rangle=1+|\cdot|. $ Let $\Psi \in C^{\infty}(\R)$
satisfy $0\leq\Psi \leq1$,$supp\Psi\in[-2,2]$ and  $\Psi(t)=1$ for $|t|\leq 1$,     $\Psi(t)=0$ for $|t|>2.$
 We define $\Psi_{\delta}(t)=\Psi(\frac{t}{\delta}).$
\begin{align*}
&\mathscr{F}_{x}f(\xi)=\frac{1}{\sqrt{2\pi}}\int_{\SR}e^{-ix \xi}f(x)dx,
~\mathscr{F}_{x}^{-1}f(\xi)=\frac{1}{\sqrt{2\pi}}\int_{\SR}e^{ix \xi}f(x)dx,\\
&\mathscr{F}u(\xi,\tau)=\frac{1}{2\pi}\int_{\SR^{2}}e^{-ix\xi-it\tau}u(x,t)dxdt,
~\mathscr{F}^{-1}u(\xi,\tau)=\frac{1}{2\pi}\int_{\SR^{2}}e^{ix\xi+it\tau}u(x,t)dxdt,\\
&D_{x}^{\alpha}f=\frac{1}{\sqrt{2\pi}}\int_{\SR} |\xi|^{\alpha}e^{ix\xi}
\mathscr{F}_{x}f(\xi)d\xi,~J_{x}^{\alpha}f=\frac{1}{\sqrt{2\pi}}
\int_{\SR} \langle\xi\rangle^{\alpha}e^{ix\xi}
\mathscr{F}_{x}f(\xi)d\xi,\\
&U(t)f=\frac{1}{\sqrt{2\pi}}\int_{\SR} e^{ix\xi+it(\xi^{3}-\xi^{-1})}
\mathscr{F}_{x}f(\xi)d\xi,\\
&P_{ N}u=\frac{1}{\sqrt{2\pi}}\int_{|\xi|\leq N}e^{ix\xi}\mathscr{F}_{x}
u(\xi)d\xi,P^{ N}u=\frac{1}{\sqrt{2\pi}}\int_{|\xi|> N}e^{ix\xi}\mathscr{F}_{x}u(\xi)d\xi,\\
&\|f\|_{L_{xt}^{p}}=\|f\|_{L_{x}^{p}L_{t}^{p}},~\|f\|_{L_{t}^{p}L_{x}^{q}}=\left(\int_{\SR}
\left(\int_{\SR}|f(x,t)|^{q}dx\right)^{\frac{p}{q}}dt\right)^{\frac{1}{p}},\\
&H^{s}(\R)=\left\{f\in \mathscr{S}^{\prime}(\R):\|f \|_{H^{s}(\SR)}=
 \|\langle\xi\rangle ^{s}\mathscr{F}_{x}{f}\|_{L_{\xi}^{2}(\SR)}<\infty\right\}.
\end{align*}

 The space $X_{s,b}(\R^{2})$ is defined as follows:
\begin{eqnarray*}
X_{s,b}(\R^{2})=\left\{u\in \mathscr{S}^{\prime}(\R^{2}):\|u\|_{X_{s,b}}
=\left[\int_{\SR^{2}}\langle \xi\rangle^{2s}\langle \sigma\rangle^{2b}
|\mathscr{F}u(\xi,\tau)|^{2}d\xi d\tau\right]^{\frac{1}{2}}<\infty\right\}.
\end{eqnarray*}
Here, $\langle \sigma\rangle =1+|\tau-\phi(\xi)|$ and $\phi(\xi)=\xi^{3}-\xi^{-1}$.

The space $\hat{L}^{\infty}(\R)$ is defined as follows
\begin{eqnarray*}
&&\hat{L}^{\infty}(\R)=\{u\in\mathcal{S}^{\prime}(\R):
\|u(x)\|_{\hat{L}_{x}^{\infty}}=\|\mathscr{F}_{x}u(\xi)\|_{L_{\xi}^{1}}<\infty\}.
\end{eqnarray*}

\subsection{Main results}

The main results of this paper are as follows:

\begin{Theorem} \label{Theorem1}
Let  $b>\frac{1}{2},s>1/2$. Then,  we have
\begin{eqnarray}
&& X_{s,b}(\R^{2})\hookrightarrow C(\R;L_{x}^{\infty}).\label{1.06}
\end{eqnarray}
\end{Theorem}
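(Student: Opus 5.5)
The plan is to prove the chain $X_{s,b}\hookrightarrow C(\R;H^{s}(\R))\hookrightarrow C(\R;L_{x}^{\infty})$, each embedding with a quantitative bound. The second link follows from the Sobolev embedding $H^{s}(\R)\hookrightarrow L^{\infty}(\R)$ for $s>1/2$. Indeed, by Cauchy--Schwarz,
\begin{eqnarray*}
\|g\|_{L_{x}^{\infty}}\leq \|\mathscr{F}_{x}g\|_{L_{\xi}^{1}}\leq \|\langle\xi\rangle^{-s}\|_{L_{\xi}^{2}}\|g\|_{H^{s}}\leq C\|g\|_{H^{s}},
\end{eqnarray*}
which is finite precisely because $2s>1$. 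Since this map is linear and bounded, continuity in time with values in $H^{s}$ transfers to continuity in time with values in $L_{x}^{\infty}$. So everything reduces to showing that $u\in X_{s,b}$ has a representative in $C(\R;H^{s}(\R))$, together with $\sup_{t}\|u(t)\|_{H^{s}}\leq C\|u\|_{X_{s,b}}$.

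For the first link I would use the standard transference argument. Set $F(\xi,\tau)=\mathscr{F}u(\xi,\tau)$ and change variables $\tau=\lambda+\phi(\xi)$; this is legitimate for every $\xi\neq0$, and the singular point $\xi=0$ is a null set. Then
\begin{eqnarray*}
u(x,t)=\frac{1}{\sqrt{2\pi}}\int_{\SR}e^{it\lambda}\,\big(U(t)g_{\lambda}\big)(x)\,d\lambda,\qquad \mathscr{F}_{x}g_{\lambda}(\xi)=\frac{1}{\sqrt{2\pi}}F(\xi,\lambda+\phi(\xi)).
\end{eqnarray*}
Since $U(t)$ is unitary on $H^{s}$, Minkowski and Cauchy--Schwarz in $\lambda$ give
\begin{eqnarray*}
\sup_{t}\|u(t)\|_{H^{s}}\leq C\int_{\SR}\|g_{\lambda}\|_{H^{s}}\,d\lambda\leq C\|\langle\lambda\rangle^{-b}\|_{L_{\lambda}^{2}}\|u\|_{X_{s,b}},
\end{eqnarray*}
and the middle norm is finite because $b>1/2$. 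This is Tao's $L^{\infty}_{t}H^{s}$ bound, which I may take as known.

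Continuity is where the density theorem in mixed Lebesgue spaces enters. First take $u$ with $\mathscr{F}u\in C_{c}^{\infty}(\R^{2}\setminus\{\xi=0\})$. Such functions are dense in $X_{s,b}$, since $X_{s,b}$ is a weighted $L^{2}$ space in $(\xi,\tau)$ and $\{\xi=0\}$ has measure zero. For such $u$, the map $t\mapsto u(t)$ is continuous, indeed smooth, into $H^{s}$ by dominated convergence. For general $u$, choose $u_{n}$ of this form with $u_{n}\to u$ in $X_{s,b}$. The bound above shows $u_{n}$ is Cauchy in $L^{\infty}(\R;H^{s})$, so it converges uniformly in $t$ to some $v\in C(\R;H^{s})$. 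One then checks that $v=u$ as distributions: $u_{n}\to u$ in $\mathscr{S}'(\R^{2})$, and uniform $H^{s}$ convergence also implies distributional convergence. Composing with the first paragraph yields \eqref{1.06} with $\sup_{t}\|u(t)\|_{L_{x}^{\infty}}\leq C\|u\|_{X_{s,b}}$.

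The main obstacle is this identification step, namely making precise in what sense $u\in\mathscr{S}'(\R^{2})$ "equals" a continuous $H^{s}$-valued curve. It comes down to matching the limit in $L^{\infty}_{t}H^{s}_{x}$ with the limit in $L^{2}_{t,x}$-type topologies. I would argue via a.e. equality in $L^{2}_{loc}(\R_{t};H^{s})$, which is where the mixed-norm density is used. The phase singularity at $\xi=0$ matters only in the change of variables, and it is harmless there because it affects a measure-zero set.
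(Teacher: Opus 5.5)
Your proof is correct, but you reach the continuity in $H^{s}$ by a different route from the paper. The second link, via $H^{s}\hookrightarrow L^{\infty}$ for $s>1/2$, is the same in both.

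The paper (Lemma 3.6) works directly with the given $u$. It sets $v=U(-t)u$ and bounds $\|u(t_1)-u(t_2)\|_{H^{s}}$ by $\|(U(t_1)-U(t_2))v(t_1)\|_{H^{s}}+\|v(t_1)-v(t_2)\|_{H^{s}}$. It splits the first term into the ranges $|\xi|\le\epsilon^{4}$, $\epsilon^{4}\le|\xi|\le 1$, $1\le|\xi|\le M$, $|\xi|\ge M$, and the second into $|\tau|\le M$, $|\tau|\ge M$. On the middle ranges it uses $|e^{it_1\phi(\xi)}-e^{it_2\phi(\xi)}|\le|t_1-t_2|\,|\phi(\xi)|$. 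This is why the singularity of $\phi(\xi)=\xi^{3}-\xi^{-1}$ at $\xi=0$ forces the extra cutoff at $\epsilon^{4}$. On the extreme ranges it uses smallness of $\langle\xi\rangle^{s}\mathscr{F}v$ in $L^{2}_{\xi}L^{1}_{\tau}$ near $\xi=0$ and $|\xi|=\infty$, which it takes from the mixed-norm density theorem (Lemma 3.3).

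You instead use the soft argument of a dense class of continuous curves plus a uniform bound. Tao's $L^{\infty}_{t}H^{s}$ estimate applied to $u_n-u_m$ makes the approximants Cauchy in the sup norm, so the limit is continuous. You then identify the limit with $u$ through convergence in $\mathscr{S}'(\R^{2})$. That step is legitimate, since $|\langle u,\psi\rangle|\le C\|u\|_{X_{s,b}}\|\langle\xi\rangle^{-s}\mathscr{F}\psi\|_{L^{2}}$ because $\langle\tau-\phi(\xi)\rangle^{-b}\le 1$.

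Your route is shorter and needs neither Lemma 3.3 nor any frequency splitting. It handles the phase singularity simply by discarding the null set $\{\xi=0\}$ in the density step. It also makes explicit in what sense $u$ has a continuous representative, which the paper leaves implicit. The paper's route avoids approximation and yields an explicit modulus of continuity in terms of the tails of $\mathscr{F}u$.

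Two small corrections to your own commentary. First, your argument does not use the density theorem in mixed Lebesgue spaces. It uses only density of functions with $\mathscr{F}u\in C_c^{\infty}(\R^{2}\setminus\{\xi=0\})$ in a weighted $L^{2}$ space. Second, the identification step you flag as the main obstacle is already settled by your $\mathscr{S}'$ argument, so the detour through $L^{2}_{\rm loc}(\R_t;H^{s})$ is unnecessary.

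Even more directly, continuity follows from dominated convergence in the $H^{s}$-valued integral $u(t)=\int e^{it\lambda}U(t)g_{\lambda}\,d\lambda$. This uses strong continuity of $U(t)$ on $H^{s}$ and $\|g_{\lambda}\|_{H^{s}}\in L^{1}_{\lambda}$.
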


\begin{Theorem} \label{Theorem2}
(Almost everywhere convergence of solution to the linear Ostrovsky equation) Let
  $f\in H^{s}(\R)(s\geq\frac{1}{4})$. Then,   we have
\begin{eqnarray*}
U(t)f\longrightarrow f
\end{eqnarray*}
as $t\longrightarrow 0$ for a.e. $x\in \R$, where $U(t)f$ is the solution
 to the linear Ostrovsky equation.
\end{Theorem}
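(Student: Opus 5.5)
The plan is the standard route: a maximal function estimate, followed by a density argument. The paper advertises a ``slightly different'' proof based on a high--low frequency splitting, so I will organize it that way. Fix $f\in H^{s}(\R)$ with $s\geq\frac14$ and split $f=P_{N}f+P^{N}f$. The target is the maximal inequality
\begin{eqnarray*}
\left\|\sup_{|t|\leq 1}|U(t)g|\right\|_{L^{2}_{x}(B)}\leq C_{B}\|g\|_{H^{1/4}(\SR)}
\end{eqnarray*}
for every bounded interval $B$. I will prove it separately for low and high frequencies, since the phase $\phi(\xi)=\xi^{3}-\xi^{-1}$ behaves like the KdV phase at high frequency and is singular at $\xi=0$.

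\emph{High frequency part.} For $|\xi|\geq 1$ we have $\phi''(\xi)=6\xi-2\xi^{-3}$, so $|\phi''(\xi)|\sim|\xi|$ there, at least after excluding a bounded neighbourhood of the zeros $\xi^{4}=1/3$, which lie inside $|\xi|\le1$. I will invoke the Kenig--Ponce--Vega maximal estimate for oscillatory integrals with phase satisfying $|\phi''|\gtrsim|\xi|$ (the KdV-type result of \cite{KPV1991,KPVCPAM}, as adapted in \cite{YZDY}). This gives $\|\sup_{|t|\le1}|U(t)P^{1}g|\|_{L^{2}(B)}\lesssim\|D_x^{1/4}P^{1}g\|_{L^2}$. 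It is proved by a $TT^{*}$ argument reducing to the kernel bound
\begin{eqnarray*}
\left|\int_{|\xi|\geq1}e^{ix\xi+it\phi(\xi)}|\xi|^{-1/2}\,d\xi\right|\lesssim|x|^{-1/2},
\end{eqnarray*}
uniformly in $t$. This follows from van der Corput's lemma with the second derivative, after a dyadic decomposition, with the usual stationary-phase case analysis.

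\emph{Low frequency part.} For $|\xi|\leq N$ the singular term $t\xi^{-1}$ prevents any smoothness in $t$ of the symbol near $\xi=0$. Still, trivially,
\begin{eqnarray*}
\sup_{t}|U(t)P_{N}g(x)|\leq \frac{1}{\sqrt{2\pi}}\int_{|\xi|\le N}|\mathscr{F}_xg(\xi)|\,d\xi\leq C N^{1/2}\|g\|_{L^{2}},
\end{eqnarray*}
which yields the maximal bound on $B$ with constant $|B|^{1/2}N^{1/2}$.

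\emph{Continuity for nice data.} For each fixed $x$, $U(t)P_{N}f(x)\to P_{N}f(x)$ as $t\to0$. This follows by dominated convergence, since $|e^{it\phi(\xi)}-1|\le2$ and $\mathscr{F}_xf\in L^{1}(|\xi|\le N)$; here the singularity at $0$ is harmless because the integrand is only bounded, not differentiated. So the low frequencies converge everywhere, with no density argument needed.

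\emph{Density on high frequencies.} For the high part, approximate $P^{N}f$ in $H^{1/4}$ by Schwartz functions $h$ whose Fourier transform is supported in $1\le|\xi|\le M$, for which convergence is again pointwise. Then the standard estimate
\begin{eqnarray*}
\left|\{x\in B:\limsup_{t\to0}|U(t)f-f|>\lambda\}\right|\lesssim\lambda^{-2}\|P^{N}f-h\|_{H^{1/4}}^{2}
\end{eqnarray*}
follows from the high-frequency maximal estimate together with Chebyshev's inequality. Letting $h\to P^{N}f$ gives measure zero for every $\lambda$ and every $B$, which proves the theorem.

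\emph{Main obstacle.} The main difficulty is the uniform oscillatory kernel bound for $|\xi|\geq1$ when the perturbation $-t\xi^{-1}$ is present. Near $|\xi|\sim1$, $\phi''$ is not exactly comparable to $|\xi|$. I would handle this by moving the cutoff to $|\xi|\ge2$, where $|\phi''|\ge 6|\xi|-\frac14\gtrsim|\xi|$, and absorbing $1\le|\xi|\le2$ into the low part, where the trivial bound applies.
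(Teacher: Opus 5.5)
Your proof is correct. Its skeleton is the paper's: a high/low frequency split, a Kenig--Ponce--Vega type maximal estimate on high frequencies, Chebyshev on a bounded interval, and a density argument. The differences lie in how the two pieces are treated.

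\emph{The paper's route.} It first writes $f=g+h$ with $g$ rapidly decreasing and $\|h\|_{H^{s}}$ small. It then controls \emph{both} pieces of $h$ by maximal estimates. The low part $P_{8}h$ goes through the $L^{\infty}_{xt}$ bound \eqref{2.026}, proved via $X_{s,b}$ and \eqref{2.010} in Lemma \ref{Lemma2.7}. The high part $P^{8}h$ goes through the global $L^{4}_{x}L^{\infty}_{t}$ estimate of Lemma \ref{Lemma5.1}, which the paper imports from the literature.

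\emph{Your route.} You split $f$ itself and observe that $U(t)P_{N}f\to P_{N}f$ at \emph{every} $x$ by dominated convergence. This works because $\chi_{\{|\xi|\le N\}}\mathscr{F}_{x}f\in L^{1}$, and the singularity of $\phi$ at $\xi=0$ affects only a null set. As a result no low-frequency maximal estimate is needed at all. In fact your bound $CN^{1/2}\|g\|_{L^{2}}$ is never used, and density is needed only on the high part. For that part you derive a local $L^{2}(B)$ bound from the uniform $|x|^{-1/2}$ kernel estimate via $TT^{*}$ and Schur's test. Running the same $TT^{*}$ argument with Hardy--Littlewood--Sobolev instead of Schur gives exactly the paper's global Lemma \ref{Lemma5.1}.

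\emph{Comparison.} Your version is more elementary and self-contained, and your bookkeeping ($\limsup$, the $\lambda^{-2}$ factor, the countable union over intervals) is cleaner. The paper's version is shorter because it cites the maximal estimates.

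One small remark: your concern about $|\xi|\sim 1$ is unnecessary. For all $|\xi|\ge 1$ one has $|\phi''(\xi)|=6|\xi|-2|\xi|^{-3}\ge 4|\xi|$, so the cutoff at $1$ already works and there is no need to move it to $2$.
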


\begin{Remark}  In order to deal with the singular point of the phase functions
 for linear Ostrovsky equation,
we use the high-low frequency  technique, with the aid of the density theorem \cite{Du}:
 $\forall \epsilon>0,$ $f\in H^{s}(\R)$, there exists a decreasing function
  $g$ and $h\in H^{s}(\R)$
with $\|h\|_{H^{s}}<\epsilon$. By using the triangle inequality, we have
\begin{eqnarray}
&&\lim\limits_{t\longrightarrow0}\left|U(t)f-f\right|\leq
\lim\limits_{t\longrightarrow0}\left|U(t)g-g\right|
+\lim\limits_{t\longrightarrow0}\left|U(t)h-h\right|.\nonumber
\end{eqnarray}

Note that, for $x\in \R,$ there exists a $y_{0}\in \R$ such that
$x\in [y_{0}-\frac{1}{2}, y_{0}+\frac{1}{2}]:=B_{1}.$  In this paper,
by using (\ref{2.01}) and \eqref{2.026} (Maximal function estimates related to
 $X_{s,b}$ related to low frequency)
  as well as $H^{s}(\R)\hookrightarrow L^{4}(\R)(s\geq\frac{1}{4}),$ we have
\begin{eqnarray*}
&&\left|\left\{x\in B_{1}: \lim\limits_{t\longrightarrow0}\left|U(t)h-h\right|>\frac{\alpha}{2}\right\}\right|
\nonumber\\&&\leq \left|\left\{x\in B_{1}: \sup\limits_{0<t<1}
\left|P^{8}h\right|>\frac{\alpha}{6}\right\}\right|\nonumber\\&&\qquad\qquad +
\left|\left\{x\in B_{1}: \sup\limits_{0<t<1}\left|P_{8}\Psi(t)h\right|>\frac{\alpha}{6}\right\}\right|
+\left|\left\{x\in B_{1}:\left|h\right|>\frac{\alpha}{6}\right\}\right|\nonumber\\
&&\qquad\leq \int_{\left\{x\in B_{1}: \sup\limits_{0<t<1}\left|P^{8}h\right|>\frac{\alpha}{6}\right\}}
\left|\frac{ \sup\limits_{0<t<1}\left|P^{8}h\right|}{\frac{\alpha}{6}}\right|^{4}dx+
\int_{\left\{x\in B_{1}: \sup\limits_{0<t<1}\left|P_{8}\Psi(t)h\right|>\frac{\alpha}{6}\right\}}
\left|\frac{ \sup\limits_{0<t<1}\left|P_{8}\Psi(t)h\right|}{\frac{\alpha}{6}}\right|dx\nonumber\\
&&\qquad\qquad+C\|h\|_{L_{x}^{4}}\nonumber\\
&&\leq C\left\|U(t)P^{8}h\right\|_{L_{x}^{4}L_{t}^{\infty}}+C \left|B_{1}\right|
\left\|U(t)P_{8}\Psi(t)h\right\|_{L_{xt}^{\infty}}+\|h\|_{H^{s}}\nonumber\\
&&\leq C\|h\|_{H^{s}}+C\|\Psi(t)U(t)h\|_{X_{0,b}}+C\|h\|_{H^{s}}\nonumber\\
&&\leq C\|h\|_{H^{s}}+C\|h\|_{L^{2}}+\|h\|_{H^{s}}\leq C\|h\|_{H^{s}}\leq \epsilon.
\end{eqnarray*}
Thanks to the high-low frequency estimates for decreasing function
 $g$ (see Lemmas 2.3, 2.4 in \cite{YZDY}),  we also have
\begin{eqnarray}
\left|U(t)g-g\right|\leq C\left[|t|+\epsilon+\frac{|t|}{\epsilon}\right].\label{1.07}
\end{eqnarray}
Hence, for any $\alpha>0$ (fixed), we proved that $\left|{\rm E_{\alpha}}\right|=0,$ where
\begin{eqnarray}
&&{\rm E_{\alpha}}=\left\{x\in B_{1}: \lim\limits_{t\longrightarrow0}\left|U(t)f-f\right|>\alpha\right\}.\nonumber
\end{eqnarray}
This completes the proof of Theorem 1.2.

The proof in this paper is slightly different from that in Yan et al. \cite{YZDY}.
The authors in \cite{YZDY} obtained for $\forall \alpha>0,$  \begin{eqnarray}
\left|\left\{x\in \R: \lim\limits_{t\longrightarrow0}\left|U(t)f-f\right|>\alpha\right\}\right|=0.\label{1.08}
\end{eqnarray}

From maximal function estimates for $h$ (see Lemma 2.1 in \cite{YZDY}), with
 the Sobolev embeddings Theorem, they establised
\begin{eqnarray}
\|P^{8}h\|_{L_{x}^{4}}\leq C\|h\|_{H^{\frac{1}{4}}},\label{1.09}
\end{eqnarray}
and
\begin{eqnarray}
\left\|U(t)P^{8}h\right\|_{L_{x}^{4}L_{t}^{\infty}}\leq C\|h\|_{H^{\frac{1}{4}}},
\left|U(t)P_{8}h-P_{8}h\right|\leq2\epsilon.\label{1.010}
\end{eqnarray}
With \eqref{1.07}, they obtained
\begin{eqnarray}
&&\left|\left\{x\in \R: \lim\limits_{t\longrightarrow0}\left|U(t)f-f\right|>\alpha\right\}\right|\nonumber\\&&
\leq \left|\left\{x\in \R: \lim\limits_{t\longrightarrow0}\left|U(t)
g-g\right|>\frac{\alpha}{2}\right\}\right|+\left|\left\{x\in \R: \lim\limits_{t\longrightarrow0}\left|U(t)h
-h\right|>\frac{\alpha}{2}\right\}\right|\nonumber\\
&&\leq C\alpha^{-4}\epsilon^{4}.\label{1.011}
\end{eqnarray}
\end{Remark}

\begin{Theorem} \label{Theorem3}
(Pointwise convergence of (\ref{1.01})-(\ref{1.02})) Let  $k\geq5$ and
$f\in H^{s}(\R)(s>\max\{\frac{1}{2}-\frac{2}{k},\frac{1}{4}\})$. Then,   we have
\begin{eqnarray*}
u(x,t)\longrightarrow f
\end{eqnarray*}
as $t\longrightarrow 0$ for a.e. $x\in \R$.
\end{Theorem}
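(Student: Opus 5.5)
We split the solution into its linear and nonlinear parts. By the local well-posedness result of Yan and Yan \cite{YY}, for $f\in H^{s}(\R)$ with $s>\frac{1}{2}-\frac{2}{k}$ there are $\delta>0$ and a solution $u$ on $[-\delta,\delta]$. It lies in a Bourgain-type space $X_{s,b}$ with $b>\frac12$ (after multiplication by $\Psi_{\delta}(t)$), and satisfies the Duhamel formula
\begin{eqnarray*}
u(t)=U(t)f-\frac{1}{k+1}\int_{0}^{t}U(t-t^{\prime})\partial_{x}(u^{k+1})(t^{\prime})dt^{\prime}=:u_{1}(t)+u_{2}(t).
\end{eqnarray*}
Since $s\geq\frac14$, Theorem \ref{Theorem2} gives $u_{1}(x,t)=U(t)f\to f$ for a.e. $x$. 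It therefore suffices to show that $u_{2}(x,t)\to 0$ as $t\to0$ for a.e. $x$.

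For $u_{2}$, we aim to show $\Psi_{\delta}(t)u_{2}\in X_{s_{1},b}$ for some $s_{1}>\frac12$ and some $b>\frac12$. Theorem \ref{Theorem1} then gives $u_{2}\in C([-\delta,\delta];L_{x}^{\infty})$, so that
\begin{eqnarray*}
\sup_{x}|u_{2}(x,t)-u_{2}(x,0)|\to0 \quad (t\to0).
\end{eqnarray*}
Since $u_{2}(\cdot,0)=0$, this yields $u_{2}\to0$ for every $x$, which is even uniform convergence. By the standard linear estimate $\|\Psi_{\delta}\int_{0}^{t}U(t-t')F\,dt'\|_{X_{s_{1},b}}\leq C\|F\|_{X_{s_{1},b-1}}$, the gain reduces to a smoothing multilinear estimate
\begin{eqnarray*}
\|\partial_{x}(u^{k+1})\|_{X_{s_{1},b-1}}\leq C\|u\|_{X_{s,b}}^{k+1},\qquad s_{1}=s+\epsilon_{0}>\tfrac12 .
\end{eqnarray*}

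This multilinear estimate is the main obstacle. To prove it, we argue by duality and split each factor into high and low frequencies with $P^{8}$ and $P_{8}$, since the phase $\xi^{3}-\xi^{-1}$ is singular at $\xi=0$.
\begin{itemize}
\item On high frequencies, we use the KdV-type tools available there: the Kato smoothing estimate $\|D_{x}U(t)f\|_{L_{x}^{\infty}L_{t}^{2}}$, the maximal function estimate $\|U(t)P^{8}f\|_{L_{x}^{4}L_{t}^{\infty}}\lesssim\|f\|_{H^{1/4}}$, and Strichartz estimates, all transferred to $X_{s,b}$.
\item On low frequencies, we use the low-frequency maximal estimate, referred to as \eqref{2.026}, and the trivial bound $\|P_{8}v\|_{L^{\infty}_{xt}}\lesssim\|v\|_{X_{0,b}}$.
\end{itemize}
The derivative is placed on the two highest-frequency factors through the smoothing norm, and the remaining $k-1\geq4$ factors are put in $L_{x}^{4}L_{t}^{\infty}$ or $L^{\infty}$. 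The room between $s$ and $\frac12-\frac2k$ is what should leave an $\epsilon_{0}$ of extra regularity. This is the same mechanism as in the well-posedness proof of \cite{YY}, keeping track of the surplus.

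If a full gain to $s_{1}>\frac12$ cannot be reached in one step, the fallback is to iterate: put the nonlinear part in $X_{s+\epsilon_{0},b}$, reinsert this into the Duhamel term, and repeat finitely many times until the regularity exceeds $\frac12$.

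As an alternative to Theorem \ref{Theorem1}, one can argue by measure. Maximal function estimates bound $\|\sup_{|t|<\delta}|u_{2}|\|_{L^{4}_{x}(B_{1})}$ by $C\delta^{\theta}$ times powers of $\|u\|_{X_{s,b}}$, using the time localization $\Psi_{\delta}$ and $b>\frac12$. Letting $\delta\to0$ then gives a.e. convergence on each unit interval $B_{1}$.
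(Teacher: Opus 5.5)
\textbf{The smoothing route has a gap.} Your main argument needs $\Psi_\delta u_2\in X_{s_1,b}$ with $s_1>\frac12$. That is a gain of about $\frac12-s$ derivatives, not a small $\epsilon_0$: for $k=5$ and $s$ near $\frac14$ it is roughly $\frac14$. You give only a heuristic for this estimate, and the paper does not supply it on the theorem's range. The paper's own smoothing result (Lemma \ref{Lemma4.1}, Theorem \ref{Theorem4}) reaches $X_{\frac12+\epsilon,b}$ only for $k\ge6$ and $s\ge\frac12-\frac{2}{k+1}+88\epsilon$; the restriction $k\ge6$ comes from Case (3) there. That misses $k=5$ entirely. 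For $k\ge8$ it misses $\frac12-\frac2k<s\le\frac12-\frac{2}{k+1}$. In effect you are describing the paper's proof of Theorem \ref{Theorem5}, not of Theorem \ref{Theorem3}.

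The iteration fallback cannot fix this. Since $U(t)f$ is only in $H^s$, the solution $u=U(t)f+u_2$ never becomes better than $X_{s,b}$. Every reinsertion reproduces the term $\int_0^tU(t-t')\partial_x\big((U(t')f)^{k+1}\big)dt'$, whose inputs all stay at level $s$. Its regularity is therefore capped by the one-step estimate.

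\textbf{Your last paragraph is a complete proof.} It needs no smoothing, so it should be the argument. Fix the existence time and the $X_{s,b}$ extension of $u$, and shrink only the cutoff. Take $b=\frac12+\frac{\epsilon}{24}$ and $b'=-\frac12+\frac{\epsilon}{12}$. Then \eqref{2.02} and the multilinear estimate of \cite{YY} (as used in \eqref{6.01}, valid for $s>\frac12-\frac2k$) give
$\|\Psi_\delta u_2\|_{X_{s,b}}\le C\delta^{\epsilon/24}\|u\|_{X_{s,b}}^{k+1}$.
The power of $\delta$ comes from $b'>-\frac12$, not from $b>\frac12$. Bound the frequencies $|\xi|\ge1$ in $L^4_xL^\infty_t$ by \eqref{2.024} with $\gamma=4$; this is exactly where $s>\frac14$ is needed. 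Bound the frequencies $|\xi|\le1$ on $B_1$ by $\|P_1v\|_{L^\infty_{xt}}\le C\sup_t\|v(t)\|_{L^2}\le C\|v\|_{X_{0,b}}$. Chebyshev then gives $|\{x\in B_1:\limsup_{t\to0}|u_2|>\lambda\}|\le C\lambda^{-4}\delta^{\epsilon/6}\to0$. Theorem \ref{Theorem2} handles $U(t)f$.

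\textbf{Comparison with the paper.} The paper gets smallness from a frequency cutoff rather than a time cutoff. It writes $u=F_{1N}+F_2^N$. The piece $F_{1N}$, built from $f_N$ and $P_N\partial_x(u_N^{k+1})$, lies in every $X_{m,b}$, hence in $C([-\delta,\delta];C^m)$, so it converges everywhere. The remainder $F_2^N$ is made small in $X_{s,b}$ by taking $N$ large, via the difference estimate \eqref{6.02}. It then goes through the same maximal-function and Chebyshev step.

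Your version is shorter. It uses Theorem \ref{Theorem2} as a black box, and the single factor $\delta^{\epsilon/24}$ replaces both the regularity of $F_{1N}$ and the difference estimate. Localizing to $B_1$ also handles the low frequencies cleanly. The paper applies $\|F_2^N\|_{L^4_xL^\infty_t}\lesssim\|F_2^N\|_{X_{s_1,b}}$ on all of $\R$, although \eqref{2.024} is stated only for $|\xi|\ge1$. Both proofs rest on the same two inputs: the multilinear estimate with $b'>-\frac12$, and the $L^4_xL^\infty_t$ maximal estimate above regularity $\frac14$.
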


\begin{Remark}Motivated by Lemma 4.1 of \cite{ZYYZ}, we prove Theorem 1.3.
The key to prove Theorem 1.3 is to present the decomposition
  of $u$ to (\ref{1.01})-(\ref{1.02}).
$\forall \epsilon_{1}>0,$  $u=\Psi(t)U(t)f-\frac{\Psi_{\delta}(t)}{k+1}\int_{0}^{t}U(t-\tau)\partial_{x}(u^{k+1})d\tau
=F_{1N}(x,t)+F_{2}^{N}(x,t),t\in[-\delta,\delta].$
Here
\begin{eqnarray*}
&&\hspace{-1cm}(i):\Psi(t)U(t)f_{N}\in C([-\delta,\delta];C^{\infty}(\R)),\\
&&\hspace{-1cm}(ii):F_{1N}(x,t)=\Psi(t)U(t)f_{N}-\frac{\Psi_{\delta}(t)}{k+1}
\int_{0}^{t}U(t-\tau)P_{N}\partial_{x}(u_{N}^{k+1})d\tau\in C([-\delta,\delta];C^{\infty}(\R)),\\
&&\hspace{-1cm}(iii):F_{2}^{N}(x,t)=\Psi(t)U(t)f^{N}
-\frac{\Psi_{\delta}(t)}{k+1}\left[\int_{0}^{t}U(t-\tau)\partial_{x}(u^{k+1})d\tau
-\int_{0}^{t}U(t-\tau)P_{N}\partial_{x}(u_{N}^{k+1})d\tau\right],
\end{eqnarray*}
where
\begin{eqnarray*}
&&u_{N}=\frac{1}{\sqrt{2\pi}}\int_{|\xi|\leq N}e^{ix\xi}\mathscr{F}_{x}u(\xi,t)d\xi,\\
&&f^{N}=\frac{1}{\sqrt{2\pi}}\int_{|\xi|\geq N}e^{ix\xi}\mathscr{F}_{x}f(\xi)d\xi,\\
&&\|F_{2}^{N}\|_{L_{x}^{4}L_{t}^{\infty}}<C\|F_{2}^{N}\|_{X_{s_{1},b}}<C\epsilon_{1}\left(s_{1}\geq\frac{1}{4}\right).
\end{eqnarray*}
For the details of the proof of Theorem 1.3, we refer the readers to Section 6 in this paper.

\end{Remark}

\begin{Theorem} \label{Theorem4}(Nonlinear smoothing)
Let  $k\geq6$ and $f\in H^{s}(\R)(s>\frac{1}{2}-\frac{2}{k+1})$. Then,
$u=u_{1}+u_{2}$, $t\in [-\delta,\delta]$, is the solution to (\ref{1.01})-(\ref{1.02}),
where $u_{1}=U(t)f\in H^{s}(\R), u_{2}(t)=\frac{1}{k+1}\int_{0}^{t}U(t-\tau)
\partial_{x}(u^{k+1})d\tau \in X_{\frac{1}{2}+\epsilon,b}\subset C(\R;H^{\frac{1}{2}+\epsilon}(\R))\subset C(\R; L_{x}^{\infty}(\R))$.
\end{Theorem}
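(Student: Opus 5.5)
We take the local solution $u$ on $[-\delta,\delta]$ given by the well-posedness theory of \cite{YY}. That theory applies because $s>\frac12-\frac{2}{k+1}>\frac12-\frac2k$, and the solution is realized as the fixed point of the Duhamel map in $X_{s,b}$ with $b=\frac12+\epsilon$. By the Duhamel formula, $u=\Psi(t)U(t)f-\frac{\Psi_\delta(t)}{k+1}\int_0^tU(t-\tau)\partial_x(u^{k+1})d\tau$ on $[-\delta,\delta]$, which gives the splitting $u=u_1+u_2$ (up to the sign convention of $u_2$). Since $U(t)$ is unitary on $H^s$, we have $u_1\in C(\R;H^s)$. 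The whole content is therefore the smoothing claim $u_2\in X_{\frac12+\epsilon,b}$. Once it is established, Theorem \ref{Theorem1} (together with Tao's embedding $X_{s,b}\hookrightarrow C(\R;H^s)$ for $b>1/2$) gives $u_2\in C(\R;H^{\frac12+\epsilon})\subset C(\R;L^\infty_x)$.

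For the smoothing claim we use the standard linear estimate for the Duhamel term in Bourgain spaces,
\[
\Big\|\Psi_\delta(t)\int_0^tU(t-\tau)F\,d\tau\Big\|_{X_{s_1,b}}\le C\delta^{1-b+b'}\|F\|_{X_{s_1,b'-1}},\qquad -\tfrac12<b'-1\le0 .
\]
This reduces the problem to the multilinear estimate
\[
\|\partial_x(u^{k+1})\|_{X_{\frac12+\epsilon,b'-1}}\le C\|u\|_{X_{s,b}}^{k+1},\qquad s>\tfrac12-\tfrac{2}{k+1}.
\]
We prove it by duality against $v\in X_{-\frac12-\epsilon,1-b'}$, which turns it into a $(k+2)$-linear integral over the hyperplane $\xi=\sum\xi_j$, $\tau=\sum\tau_j$. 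We then split into frequency regions. First, in the low-frequency region $|\xi|\le 8$ the derivative costs nothing, $\langle\xi\rangle^{\frac12+\epsilon}$ is bounded, and the singular point of $\phi$ at $\xi=0$ is harmless. There we use the $L^\infty$-type low-frequency bounds (the maximal estimate \eqref{2.026}), combined with $L^{2(k+1)}$ Strichartz bounds. Second, in the high-frequency region, by symmetry the largest input has $|\xi_1|\ge|\xi|/(k+1)$. The factor $|\xi|\langle\xi\rangle^{\frac12+\epsilon}$ is absorbed by two devices. One is the Kato smoothing estimate $\|D_x U(t)P^{8}f\|_{L^\infty_xL^2_t}\le C\|f\|_{L^2}$, transferred to $X_{0,b}$, which gains one derivative on the output or on the highest-frequency factor. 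The other is the maximal estimates $\|U(t)P^8 f\|_{L^{p}_xL^\infty_t}$ (including \eqref{2.01} with $p=4$, $s=\frac14$), interpolated with Strichartz estimates $\|D_x^{\alpha}U(t)f\|_{L^p_xL^q_t}$ obtained by Stein complex interpolation. Applying Hölder's inequality in the form $L^\infty_xL^2_t\cdot(L^{2k}_xL^\infty_t)^{k}\cdot\ldots$ then distributes the remaining $\frac12+\epsilon$ derivatives plus the deficit $\frac12-s$ over the $k$ lower factors, where each factor absorbs $\frac12-\frac{2}{k+1}$ or so via the maximal estimate $\|U(t)f\|_{L^{q}_xL^\infty_t}\lesssim\|f\|_{H^{\frac12-\frac1q}}$.

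The main obstacle is this derivative count in the high-high-to-high and high-to-high cases. We need $\frac12+\epsilon+1$ derivatives on the output, with only one gained from Kato smoothing, so the remaining $\frac12+\epsilon$ must be paid by the $k+1$ inputs. We try first to put the output and the largest input in $L^\infty_xL^2_t$-type norms: Kato smoothing on both, plus the bilinear gain from the $\langle\sigma\rangle$ weight, which is strong since $\phi'(\xi)\sim3\xi^2$ at high frequency. The remaining $k$ factors go in $L^{k}_xL^\infty_t$-type maximal norms requiring $H^{\frac12-\frac1k+}$ regularity. This closes precisely when the threshold $s>\frac12-\frac{2}{k+1}$ matches the available room, which is where $k\ge6$ is needed. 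If the accounting falls short, we would additionally exploit the resonance bound $\max_j\langle\sigma_j\rangle\gtrsim|\xi_{\max}|^2|\xi_{\min}|$ in the regions where two frequencies are comparable and large, trading a power of $\langle\sigma\rangle$ for the missing derivatives.
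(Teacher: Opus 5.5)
Your top-level reduction is the paper's. Duhamel plus (2.02) reduces everything to Lemma 4.1, and Lemma 3.6 with $H^{\frac12+\epsilon}\hookrightarrow L^\infty$ finishes. Your low-output region is also the paper's (its cases $|\xi|\le a$): $L^2_{xt}\cdot(L^{2(k+1)}_{xt})^{k+1}$ together with (2.05) at $q=2(k+1)$, which is where $\frac12-\frac{2}{k+1}$ comes from.

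The genuine difference is the high-output region. The paper splits into $\Omega_0,\dots,\Omega_7$ and, inside these, according to $|1-\frac{1}{3\xi_i^2\xi_j^2}|\gtrless\frac12$, because $\phi'(\xi_i)-\phi'(\xi_j)=3(\xi_i^2-\xi_j^2)(1-\frac{1}{3\xi_i^2\xi_j^2})$ can degenerate. It uses Lemma 2.2 in the nondegenerate case, and in the degenerate case it uses $|\xi_i|\sim|\xi_j|^{-1}$ to pay for the lossy low-frequency bound (2.027).

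Your final scheme needs no bilinear estimate and no resonance split: Kato smoothing on both the dual function $F$ and the top input $F_1$, maximal norms on the other $k$ factors. The count does close. For $|\xi|\ge 8$, use $|\xi|\le (k+1)|\xi_1|$, set $\alpha:=\frac12-\frac{1-\epsilon}{k}+\epsilon$, and move the deficits $\langle\xi_j\rangle^{\alpha-s}\le\langle\xi_1\rangle^{\alpha-s}$ to the top frequency. This gives
\[
K\lesssim\frac{|\xi|^{1-2\epsilon}\,\langle\xi_{1}\rangle\prod_{j=2}^{k+1}\langle\xi_{j}\rangle^{-\alpha}}{\langle\sigma\rangle^{-b_{0}}\prod_{j=1}^{k+1}\langle\sigma_{j}\rangle^{b_{1}}}\,\langle\xi_{1}\rangle^{\frac{k-3}{2}-(k+1)s+(k+4)\epsilon}.
\]
Then apply H\"older in $L^{1/\epsilon}_xL^2_t\times L^\infty_xL^2_t\times(L^{k/(1-\epsilon)}_xL^\infty_t)^k$:
\begin{itemize}
\item for $F$, use (2.011); $F$ has modulation only $\frac12-$, so you need this $\epsilon$-lossy form of Kato;
\item for $F_1$, use $\|J_x^{1}F_1\|_{L^\infty_xL^2_t}\lesssim\|F_1\|_{X_{0,b_1}}$, valid at all frequencies since $\phi'=3\xi^2+\xi^{-2}\gtrsim\langle\xi\rangle^2$;
\item for the rest, use (2.024).
\end{itemize}
This proves the estimate whenever $s\ge\frac12-\frac{2}{k+1}+\frac{k+4}{k+1}\epsilon$, uniformly in the frequency configuration. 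It is much shorter than the paper's argument, and it uses $k$ only through the exponent $k/(1-\epsilon)$. In the paper, $k\ge6$ enters only through (4.016), so your remark that the count is where $k\ge6$ is needed is mistaken, though harmlessly.

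Two things must be fixed.

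First, the earlier H\"older pattern $L^\infty_xL^2_t\cdot(L^{2k}_xL^\infty_t)^k$, with the dual function in $L^2_{xt}$, does not close. In $\Omega_1$ ($|\xi_1|\gg|\xi_2|$), a single Kato gain leaves $|\xi_1|^{\frac12+\epsilon-s}$ at the top frequency. That cannot be ``distributed over the lower factors'': a power $\langle\xi_j\rangle^a$ at a lower frequency is bounded by $\langle\xi_1\rangle^a$, not conversely. The second Kato gain, on the output, is indispensable. The resonance bound you hold in reserve is a bilinear KdV fact, not available in that form for a $(k+1)$-fold product with this phase, but you do not need it.

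Second, the $k$ lower factors may sit arbitrarily close to the singular point $\xi=0$. Your scheme therefore needs a lossless estimate $\|P_{\le1}u\|_{L^q_xL^\infty_t}\lesssim\|u\|_{X_{0,b}}$ with $q=k/(1-\epsilon)$. You invoke only high-frequency maximal bounds, and the paper does not supply this one. Its bounds (2.022)--(2.024) need $|\xi|\ge1$, and (2.027) costs $|\xi|^{-(1-2\epsilon)s_1}$ as $\xi\to0$; that is exactly why the paper uses it only where $|\xi_3|\sim|\xi|^{-1}$.

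The estimate is true, but you must prove it. On blocks $|\xi|\sim 2^{-m}$ with $m$ large, $\phi'\sim2^{2m}$ and $|\phi''|\sim2^{3m}$ give $\sup_t|K_m(x,t)|\lesssim\min\{2^{-m},(2^m|x|)^{-1/2}\}$. Then $TT^*$ yields the operator bound $2^{-m(\frac12-\frac1q)}$, which is summable in $m$ for $q>4$. The band $|\xi|\sim1$ is standard since $\phi'''\ge6$. This is where the singular phase actually enters your argument.
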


\begin{Theorem} \label{Theorem5}(Uniform convergence of solution to the solution to linear equation)
Let  $k\geq6$ and $f\in H^{s}(\R)(s>\frac{1}{2}-\frac{2}{k+1})$. Then,   we have
\begin{eqnarray*}
\lim\limits_{t\longrightarrow 0}\left\|u(x,t)-U(t)f\right\|_{L_{x}^{\infty}}=0.
\end{eqnarray*}
\end{Theorem}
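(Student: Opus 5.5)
The plan is to deduce Theorem \ref{Theorem5} directly from Theorem \ref{Theorem4} together with Theorem \ref{Theorem1}.

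On $[-\delta,\delta]$ we write the Duhamel decomposition $u=u_{1}+u_{2}$ with $u_{1}=U(t)f$. Then
\begin{eqnarray*}
u(x,t)-U(t)f=-u_{2}(x,t),\qquad u_{2}(t)=\frac{1}{k+1}\int_{0}^{t}U(t-\tau)\partial_{x}(u^{k+1})d\tau ,
\end{eqnarray*}
up to the sign convention of the paper. It therefore suffices to show that $\|u_{2}(\cdot,t)\|_{L_{x}^{\infty}}\to0$ as $t\to0$.

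First, Theorem \ref{Theorem4} gives that, after multiplication by the cutoff $\Psi_{\delta}(t)$, the function $\Psi_{\delta}(t)u_{2}$ belongs to $X_{\frac12+\epsilon,b}$ for some $b>\frac12$. This function coincides with $u_{2}$ for $|t|\leq\delta$.

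Second, Theorem \ref{Theorem1}, in its intermediate form $X_{s,b}\hookrightarrow C(\R;H^{s}(\R))$ for $s>\frac12$, $b>\frac12$, yields $u_{2}\in C([-\delta,\delta];H^{\frac12+\epsilon}(\R))$. The embedding $H^{\frac12+\epsilon}(\R)\hookrightarrow L^{\infty}(\R)$ then gives
\begin{eqnarray*}
\|u_{2}(t)\|_{L_{x}^{\infty}}\leq C\|u_{2}(t)-u_{2}(0)\|_{H^{\frac12+\epsilon}}+C\|u_{2}(0)\|_{H^{\frac12+\epsilon}}.
\end{eqnarray*}

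Third, the Duhamel integral vanishes at $t=0$, so $u_{2}(0)=0$. This holds as an element of $H^{\frac12+\epsilon}$, because the continuous representative in $C(\R;H^{\frac12+\epsilon})$ agrees with the integral, which is identically zero at $t=0$. Time continuity then yields $\|u_{2}(t)\|_{H^{\frac12+\epsilon}}\to0$, hence $\|u(t)-U(t)f\|_{L_{x}^{\infty}}\to0$.

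The main obstacle is justifying that the continuous-in-time representative of $\Psi_{\delta}u_{2}$ agrees with the Duhamel expression and equals $0$ at $t=0$. I would handle this by approximating $u$ (or the nonlinearity $\partial_{x}(u^{k+1})$) by smooth functions in the relevant $X_{s,b'}$ norm with $b'=b-1$, for which the identity is clear. I would then pass to the limit using the linear Duhamel estimate $\|\Psi_{\delta}\int_{0}^{t}U(t-\tau)F\,d\tau\|_{X_{s,b}}\leq C\|F\|_{X_{s,b-1}}$ and the embedding of Theorem \ref{Theorem1}.
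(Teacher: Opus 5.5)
Your proposal is correct and takes essentially the same route as the paper. The paper sets $V(t)=u-U(t)f$ and uses Theorem 1.4 to place $V$ in $X_{\frac12+\epsilon,b}$ with $b>\frac12$. It then applies Lemmas 3.6--3.7 and $H^{\frac12+\epsilon}\hookrightarrow L^{\infty}$ to get continuity of $t\mapsto\|V(t)\|_{L_x^{\infty}}$, and concludes from $V(0)=0$. Your approximation argument, showing that the continuous representative of $\Psi_\delta u_2$ vanishes at $t=0$, supplies a justification the paper leaves implicit.
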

\begin{Remark}  We present the outline of proof of Theorem 1.5.
We firstly prove  that  there exists a unique solution $u\in X_{s,b}(b>\frac{1}{2})$ to (\ref{1.01})-(\ref{1.02})
 for $f\in H^{s}(\R)(s>\frac{1}{2}-\frac{2}{k+1},k\geq6)$, which is just Theorem 1.4.
Then, by using  Lemma \ref{Lemma3.7},  we have
\begin{eqnarray}
\lim\limits_{t\longrightarrow 0}\left\|u-U(t)f\right\|_{L_{x}^{\infty}}=0,\label{1.012}
\end{eqnarray}
since
\begin{eqnarray}
\left\|u(x,0)-f\right\|_{L_{x}^{\infty}}=0.\label{1.013}
\end{eqnarray}
\end{Remark}

\begin{Theorem} \label{Theorem6}(Pointwise spatial decay with respect to $t$).
Let  $k\geq6$ and $f\in H^{s}(\R)(s>\frac{1}{2}-\frac{2}{k+1})$. Then, for $t\in [-\delta,\delta],$  we have
\begin{eqnarray*}
\lim\limits_{|x|\longrightarrow +\infty}(u(x,t)-U(t)f)=0.
\end{eqnarray*}
When $f\in \hat{L}^{\infty}(\R)\cap H^{s}(\R)$,
for $\forall t\in [-\delta,\delta],$  we have that
\begin{eqnarray*}
\lim\limits_{|x|\longrightarrow+\infty}u=0.
\end{eqnarray*}

\end{Theorem}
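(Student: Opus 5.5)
The plan is to work from the decomposition of Theorem \ref{Theorem4}: on $[-\delta,\delta]$ we write $u=U(t)f+u_{2}$, up to the sign convention for the Duhamel term, with $u_{2}(t)=-\frac{1}{k+1}\int_{0}^{t}U(t-\tau)\partial_{x}(u^{k+1})d\tau$. Since $u_{2}\in X_{\frac{1}{2}+\epsilon,b}$ with $b>\frac12$, Theorem \ref{Theorem1} (more precisely, the embedding into $C(\R;H^{\frac12+\epsilon}(\R))$) gives $u_{2}(\cdot,t)\in H^{\frac12+\epsilon}(\R)$ for every fixed $t\in[-\delta,\delta]$. The first claim then reduces to a standard fact: every $g\in H^{\sigma}(\R)$ with $\sigma>\frac12$ tends to $0$ as $|x|\to\infty$.

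To prove that fact, note that $\langle\xi\rangle^{-\sigma}\in L^{2}$ when $\sigma>\frac12$, so Cauchy--Schwarz gives $\mathscr{F}_{x}g\in L^{1}_{\xi}$ with $\|\mathscr{F}_{x}g\|_{L^{1}}\le C\|g\|_{H^{\sigma}}$. The Riemann--Lebesgue lemma then shows that $g(x)=\frac{1}{\sqrt{2\pi}}\int e^{ix\xi}\mathscr{F}_{x}g(\xi)d\xi$ is continuous and vanishes at infinity. Applying this to $g=u_{2}(\cdot,t)=u(\cdot,t)-U(t)f$ proves $\lim_{|x|\to\infty}(u(x,t)-U(t)f)=0$ for each $t\in[-\delta,\delta]$.

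For the second claim, we only need $\lim_{|x|\to\infty}U(t)f=0$. Since $\mathscr{F}_{x}(U(t)f)(\xi)=e^{it(\xi^{3}-\xi^{-1})}\mathscr{F}_{x}f(\xi)$ has modulus $|\mathscr{F}_{x}f(\xi)|$, the hypothesis $f\in\hat{L}^{\infty}(\R)$ gives $\mathscr{F}_{x}(U(t)f)\in L^{1}_{\xi}$ for every $t$. The singularity of the phase at $\xi=0$ does not matter, because the multiplier is unimodular and measurable away from the null set $\{0\}$. The Riemann--Lebesgue lemma again gives $U(t)f(x)\to0$ as $|x|\to\infty$, and hence $u=U(t)f+u_{2}\to0$.

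The main obstacle is hidden in Theorem \ref{Theorem4}. We must ensure that $u_{2}(t)$ really is the restriction of an $X_{\frac12+\epsilon,b}$ function, in particular after the time cutoff $\Psi_{\delta}$, so that the pointwise-in-$t$ statement is legitimate. We must also ensure that $U(t)f$ and $u$ are interpreted as functions, so that the limits make sense pointwise. I would handle this by using the continuous representative furnished by the $L^{1}$ Fourier transform, since $u_{2}(\cdot,t)$ and $U(t)f$ (in the second part) are then genuine continuous functions. For the first part, where $U(t)f$ alone may not be continuous, I would read the statement as being about the difference $u-U(t)f$, which is itself continuous.
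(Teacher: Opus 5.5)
Your proposal is correct and follows essentially the same route as the paper. It splits $u=U(t)f+u_{2}$ on $[-\delta,\delta]$, places $u_{2}(t)$ in $H^{\frac12+\epsilon}(\R)$ via Lemma~\ref{Lemma4.1}, \eqref{2.02} and Lemma~\ref{Lemma3.6}, concludes with $\mathscr{F}_{x}g\in L^{1}$ plus Riemann--Lebesgue (Lemmas~\ref{Lemma9.1}--\ref{Lemma9.2}), and for the second claim uses that the unimodular multiplier $e^{it\phi(\xi)}$ preserves $\hat{L}^{\infty}(\R)$, exactly as in \eqref{9.09}--\eqref{9.011}; your remarks on the cutoff $\Psi_{\delta}$ and on continuous representatives only make explicit what the paper leaves implicit.
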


The rest of the paper is arranged as follows.
In Section 2,  we give some preliminaries.
In Section 3, we prove Theorem 1.1. In Section 4, we show a multilinear estimate.
In Section 5, we prove Theorem 1.2. In Section 6, we prove Theorem 1.3.
In Section 7, we prove Theorem 1.4. In Section 8, we prove Theorem 1.5. In Section 9, we prove Theorem 1.6.

\bigskip
\section{Preliminaries}\label{sec2}

\setcounter{equation}{0}

\setcounter{Theorem}{0}

\setcounter{Lemma}{0}

\setcounter{section}{2}

\begin{Lemma}\label{Lemma2.1}
Let $\delta\in (0,1)$,  $s\in \R$, $c\in \R$,   $-\frac{1}{2}<b^{\prime}
\leq0\leq b\leq b^{\prime}+1$, $f\in H^{s}(\R)$ and $h\in X_{s,b^{\prime}}$.
Then,   we have
\begin{eqnarray}
&&\left\|\Psi(t)U(t)f\right\|_{X_{s,c}(\SR^{2})}\leq C\|f\|_{H^{s}(\SR)},\label{2.01}\\
&&\left\|\Psi\left(\frac{t}{\delta}\right)\int_{0}^{t}U(t-\tau)h(\tau)
d\tau\right\|_{X_{s,b}(\SR^{2})}\leq C
\delta^{1+b^{\prime}-b}\|h\|_{X_{s,b^{\prime}}(\SR^{2})}.\label{2.02}
\end{eqnarray}
\end{Lemma}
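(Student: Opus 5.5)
The plan is the standard Bourgain-space argument: conjugate by the linear group $U(t)$ to reduce both estimates to one-dimensional estimates in the time variable, uniformly in the frequency $\xi$. Set $\phi(\xi)=\xi^{3}-\xi^{-1}$. The singularity of $\phi$ at $\xi=0$ causes no trouble here. The weight $\langle\sigma\rangle$ involves only $\tau-\phi(\xi)$, and the change of variables $\tau\mapsto\tau-\phi(\xi)$ is made for each fixed $\xi\neq0$. The set $\{\xi=0\}$ has measure zero.

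For \eqref{2.01}, write $\Psi(t)U(t)f=\frac{1}{\sqrt{2\pi}}\int e^{ix\xi}e^{it\phi(\xi)}\Psi(t)\mathscr{F}_{x}f(\xi)\,d\xi$. Its space-time Fourier transform is $\mathscr{F}_{x}f(\xi)\,\hat{\Psi}(\tau-\phi(\xi))$. Hence
\[
\|\Psi(t)U(t)f\|_{X_{s,c}}^{2}=\int\langle\xi\rangle^{2s}|\mathscr{F}_{x}f(\xi)|^{2}\int\langle\tau-\phi(\xi)\rangle^{2c}|\hat{\Psi}(\tau-\phi(\xi))|^{2}\,d\tau\, d\xi=\|\Psi\|_{H^{c}_{t}}^{2}\|f\|_{H^{s}}^{2}.
\]
This is finite for every $c\in\R$ because $\Psi\in\mathscr{S}$.

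For \eqref{2.02}, use the identity $\|u\|_{X_{s,b}}=\|\langle\xi\rangle^{s}\,\|U(-t)u\|^{\wedge}\ldots\|\|$. In other words, $\|u\|_{X_{s,b}}=\|\,\|\langle\xi\rangle^{s}\mathscr{F}_{x}(U(-t)u)(\xi,\cdot)\|_{H^{b}_{t}}\|_{L^{2}_{\xi}}$. Under this identity, $\Psi_{\delta}(t)\int_{0}^{t}U(t-\tau)h(\tau)\,d\tau$ corresponds to $\Psi_{\delta}(t)\int_{0}^{t}F_{\xi}(\tau)\,d\tau$ with $F_{\xi}(\tau)=\mathscr{F}_{x}(U(-\tau)h(\tau))(\xi)$. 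It therefore suffices to prove the one-dimensional estimate
\[
\Big\|\Psi_{\delta}(t)\int_{0}^{t}F(\tau)\,d\tau\Big\|_{H^{b}_{t}}\le C\delta^{1+b'-b}\|F\|_{H^{b'}_{t}},\qquad -\tfrac12<b'\le0\le b\le b'+1,\ 0<\delta<1,
\]
and then integrate in $\xi$ against $\langle\xi\rangle^{2s}$.

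To prove the one-dimensional estimate, follow Kenig–Ponce–Vega / Ginibre. Write $\int_{0}^{t}F=\int\hat F(\lambda)\frac{e^{it\lambda}-1}{i\lambda}\,d\lambda$ and split into $|\lambda|\delta\le1$ and $|\lambda|\delta>1$.

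In the region $|\lambda|\le1/\delta$, Taylor-expand: $\frac{e^{it\lambda}-1}{i\lambda}=\sum_{n\ge1}\frac{t^{n}(i\lambda)^{n-1}}{n!}$. Each term is bounded by $\|\Psi_{\delta}t^{n}\|_{H^{b}}\delta^{-(n-1)}\int_{|\lambda|\le1/\delta}|\hat F|$. Scaling gives $\|\Psi_{\delta}t^{n}\|_{H^{b}}\lesssim C^{n}\delta^{n+\frac12-b}$. Cauchy–Schwarz gives $\int_{|\lambda|\le1/\delta}|\hat F|\le\|F\|_{H^{b'}}\big(\int_{|\lambda|\le1/\delta}\langle\lambda\rangle^{-2b'}\big)^{1/2}\lesssim\delta^{-\frac12+b'}\|F\|_{H^{b'}}$, using $b'>-\frac12$. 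The factorials make the sum converge.

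In the region $|\lambda|>1/\delta$, split into the term $\Psi_{\delta}\int_{|\lambda|>1/\delta}\hat F(\lambda)e^{it\lambda}/(i\lambda)$ and the term $-\Psi_{\delta}\int_{|\lambda|>1/\delta}\hat F/(i\lambda)$.

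For the first term, use the product estimate $\|\Psi_{\delta}g\|_{H^{b}}\lesssim\delta^{\frac12-b}\|g\|_{H^{b}}$, valid when $\delta$ is small, $0\le b\le 1$, and $\hat g$ is supported at $|\lambda|\gtrsim1/\delta$. Here $g$ has $H^{b}$ norm $\|\langle\lambda\rangle^{b-1}\hat F\mathbf{1}_{|\lambda|>1/\delta}\|_{L^{2}}\le\delta^{1+b'-b}\|F\|_{H^{b'}}$, since $b-1-b'\le0$. A sharper commutator-type bound absorbing the extra factor $\delta^{\frac12-b}$ is the main technical point. I would instead bound $\|\Psi_{\delta}g\|_{H^{b}}\lesssim\|g\|_{H^{b}}+\delta^{-b}\|g\|_{L^{2}}$ directly; on high frequencies $\|g\|_{L^{2}}\le\delta^{b}\|g\|_{H^{b}}$, so this gives a bound $\lesssim\|g\|_{H^{b}}$.

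For the second term, it equals $\Psi_{\delta}$ times a constant of size $\le\|F\|_{H^{b'}}\big(\int_{|\lambda|>1/\delta}\langle\lambda\rangle^{-2-2b'}\big)^{1/2}\lesssim\delta^{\frac12+b'}\|F\|_{H^{b'}}$. Multiplying by $\|\Psi_{\delta}\|_{H^{b}}\lesssim\delta^{\frac12-b}$ gives $\delta^{1+b'-b}$.

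I expect the main obstacle to be the high-frequency piece with the localization $\Psi_{\delta}$, namely keeping the exact power $\delta^{1+b'-b}$ when $b>0$. If needed, I will fall back on interpolating between the endpoints $b=0$ and $b=b'+1$.
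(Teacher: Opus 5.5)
Your proposal is correct and is essentially the standard Kenig--Ponce--Vega / Ginibre--Tsutsumi--Velo argument behind the references the paper cites in place of a proof. That argument conjugates by $U(t)$ to reduce to a one-dimensional estimate in $t$, writes $\int_0^t F$ in Fourier form split at $|\lambda|\sim\delta^{-1}$, Taylor-expands at low frequency, and treats the constant and oscillatory pieces separately at high frequency. The bound you finally use, $\|\Psi_\delta g\|_{H^b}\lesssim\|g\|_{H^b}+\delta^{-b}\|g\|_{L^2}$ for $0\le b\le 1$ (interpolate between $b=0$ and $b=1$, or rescale $t=\delta s$), is the right one and closes the argument. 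Your preliminary claim $\|\Psi_\delta g\|_{H^b}\lesssim\delta^{\frac12-b}\|g\|_{H^b}$ should simply be deleted: it fails for $b<\frac12$, as the test function $g=e^{10it/\delta}\eta(t/\delta)$ with $\widehat{\eta}$ compactly supported shows.
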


For the proof of Lemma 2.1, we refer the readers to \cite{Bourgain-GAFA93,G2002}.

\begin{Lemma}\label{Lemma2.2}(An interpolation Theorem related to $X_{s,b})$.)
\noindent Let $b=\frac{1}{2}+\frac{\epsilon}{24}$ and $0\leq s\leq \frac{1}{2}.$
Then,     we have
\begin{eqnarray}
\left\|I^{s}(u_{1},u_{2})\right\|
  _{L_{xt}^{2}}\leq C\prod_{j=1}^{2}\| u_{j}\| _{X_{0,\frac{2+2s}{3}b}},\label{2.03}
\end{eqnarray}
where $I^{s}(u_{1},u_{2})$ is defined as follows:
\begin{eqnarray}\label{2.04}
\hspace{-1cm}\mathscr{F}I^{s}(u_{1},u_{2})(\xi,\tau)
&=&\int_{\!\!\!\mbox{\scriptsize $
\begin{array}{l}
\xi=\xi_{1}+\xi_{2}\\
\tau=\tau_{1}+\tau_{2}
\end{array}
$}}
\left|\phi^{\prime}(\xi_{1})-\phi^{\prime}(\xi_{2})\right|^{s}
\mathscr{F}{u_{1}}(\xi_{1},\tau_{1})\mathscr{F}{u_{2}}
(\xi_{2},\tau_{2})\,d\xi_{1}d\tau_{1}.
\end{eqnarray}

\end{Lemma}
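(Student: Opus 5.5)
The plan is to interpolate between two endpoint bilinear estimates for $I^{s}(u_1,u_2)$, using Stein's complex interpolation theorem in the parameter $s$. At $s=0$, $I^{0}(u_1,u_2)=u_1u_2$, and the claim is the $L^4_{xt}$ Strichartz-type bound
\[
\|u_1u_2\|_{L^2_{xt}}\le \|u_1\|_{L^4_{xt}}\|u_2\|_{L^4_{xt}}\le C\prod_{j=1}^2\|u_j\|_{X_{0,\frac{2}{3}b}}.
\]
At $s=\frac12$, the claim is the bilinear smoothing estimate
\[
\|I^{1/2}(u_1,u_2)\|_{L^2_{xt}}\le C\prod_{j=1}^2\|u_j\|_{X_{0,b}}.
\]
Interpolating between these two, the exponent $\frac{2+2s}{3}b$ is linear in $s$: it equals $\frac23 b$ at $s=0$ and $b$ at $s=\frac12$. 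This gives \eqref{2.03}.

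\textbf{The $s=\frac12$ endpoint.} I use the transference principle. Write $u_j=\int e^{it\tau_0}U(t)g_{j,\tau_0}\,d\tau_0$ with $\int\|g_{j,\tau_0}\|_{L^2}d\tau_0\le C\|u_j\|_{X_{0,b}}$, which holds since $b>\frac12$. This reduces matters to free solutions $U(t)g_1,\,U(t)g_2$. For these, the space-time Fourier transform of $I^{1/2}$ is supported on $\tau=\phi(\xi_1)+\phi(\xi_2)$, $\xi=\xi_1+\xi_2$. By Plancherel,
\[
\|I^{1/2}\|_{L^2}^2=\int\Bigl|\sum_{\xi_1 \text{ roots}}\frac{|\phi'(\xi_1)-\phi'(\xi_2)|^{1/2}\hat g_1(\xi_1)\hat g_2(\xi_2)}{|\phi'(\xi_1)-\phi'(\xi_2)|}\Bigr|^2 d\xi\,d\tau .
\]
Changing variables $(\xi,\tau)\to(\xi_1,\xi_2)$ has Jacobian $|\phi'(\xi_1)-\phi'(\xi_2)|$, which exactly cancels. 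Cauchy–Schwarz over the finitely many roots then gives $C\|g_1\|_{L^2}^2\|g_2\|_{L^2}^2$. The main obstacle is here. The map $\xi_1\mapsto(\xi_1,\xi-\xi_1)$ solving $\phi(\xi_1)+\phi(\xi-\xi_1)=\tau$ must have uniformly boundedly many preimages, despite the singular term $-\xi^{-1}$ in $\phi'(\xi)=3\xi^2+\xi^{-2}$. I would check this directly: for fixed $\xi$, $\phi'(\xi_1)-\phi'(\xi-\xi_1)$ is a rational function of bounded degree, so $\tau$ is attained a bounded number of times, say at most $6$.

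\textbf{The $s=0$ endpoint.} I would take the $L^4_{xt}$ estimate $\|U(t)g\|_{L^4_{xt}}\le C\|g\|_{L^2}$. This is not immediate because $\phi''(\xi)=6\xi-2\xi^{-3}$ vanishes at $\xi=\pm3^{-1/4}$, so I would instead prove the bilinear $L^2$ bound at $s=0$ directly. Split into frequency-localized pieces where $|\phi'(\xi_1)-\phi'(\xi_2)|\gtrsim1$, handled by the $s=\frac12$ case, and pieces where it is small. In the latter, both $\xi_1,\xi_2$ lie in compact sets away from zero (or both are large and close), and there the Kenig–Ponce–Vega $L^8_tL^4_x$/$L^4$ Strichartz bound, $\|D^{1/8}\ldots\|$ with $|\phi''|^{1/8}$ weight, suffices. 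Interpolating with the trivial $X_{0,\frac12+}\subset L^\infty_tL^2_x$ and $L^2_{xt}=X_{0,0}$ gives $L^4_{xt}$ from $X_{0,\frac13+}$, and $\frac23 b>\frac13$ ensures this.

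\textbf{Stein interpolation.} Define the analytic family $T_z(u_1,u_2)=I^{z}(u_1,u_2)$ acting on $\langle\sigma_1\rangle^{-\frac{2+2z}{3}b}\hat F_1$ and $\langle\sigma_2\rangle^{-\frac{2+2z}{3}b}\hat F_2$, with $F_j\in L^2$, for $0\le\operatorname{Re}z\le\frac12$. On the line $\operatorname{Re}z=0$, $|\cdot|^{i\operatorname{Im}z}$ and $\langle\sigma\rangle^{i\cdot}$ are unimodular, so the endpoint bounds hold with constants independent of $\operatorname{Im}z$. Since the $s=0$ estimate was proved for nonnegative Fourier transforms, I would first reduce to $\hat u_j\ge0$, which is harmless on these lines, and then apply Stein's theorem to obtain \eqref{2.03} for all $0\le s\le\frac12$.
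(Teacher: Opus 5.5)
Your overall scheme is the standard one from the reference the paper cites: a bilinear smoothing estimate for free solutions at $s=\frac12$, transference to $X_{0,b}$, and interpolation in $s$ against an $s=0$ endpoint. Your $s=\frac12$ endpoint is sound. For $\xi\neq0$, multiplying $\phi(\xi_1)+\phi(\xi-\xi_1)=\tau$ by $\xi_1(\xi-\xi_1)$ gives a polynomial in $\xi_1$ of degree $4$ with leading coefficient $-3\xi$, so there are at most four preimages, and $\{\xi=0\}$ is a null set. Your interpolation step is also fine once you use the pointwise domination $|\mathscr{F}I^{z}(u_1,u_2)|\le \mathscr{F}I^{\operatorname{Re}z}(\tilde u_1,\tilde u_2)$, where $\mathscr{F}\tilde u_j=|\mathscr{F}u_j|$. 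It is even simpler to apply H\"older's inequality to the nonnegative trilinear form, $\Lambda_s\le\Lambda_0^{1-2s}\Lambda_{1/2}^{2s}$.

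The genuine gap is the $s=0$ endpoint.

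\begin{itemize}
\item The free estimate $\|U(t)g\|_{L^4_{xt}}\le C\|g\|_{L^2}$ is false. Take $\mathscr{F}_{x}g=\chi_{[\xi_0,\xi_0+\lambda]}$ with $\phi''(\xi_0)\neq0$. Then $|U(t)g|\gtrsim\lambda$ on a region of area $\sim\lambda^{-3}$ (namely $|t|\lesssim\lambda^{-2}$, $|x+t\phi'(\xi_0)|\lesssim\lambda^{-1}$). The ratio is therefore $\gtrsim\lambda^{-1/4}\to\infty$, so no global $L^2\to L^4_{xt}$ bound exists for a curved dispersion relation.
\item Your replacement does not repair this, for three reasons.
\begin{itemize}
\item The set where $|\phi'(\xi_1)-\phi'(\xi_2)|=|\xi_1^2-\xi_2^2|\,|3-\xi_1^{-2}\xi_2^{-2}|$ is small is a neighbourhood of $\{|\xi_1|=|\xi_2|\}\cup\{3\xi_1^2\xi_2^2=1\}$. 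This set is not compact: it contains pairs with both frequencies tiny, and pairs with $|\xi_1|\to\infty$, $|\xi_2|\to0$.
\item On the complementary region, the $s=\frac12$ bound controls $u_1u_2$ only through $X_{0,b}$ norms, not the required $X_{0,\frac23 b}$ with $\frac23 b<\frac12$; no transference is available below $b=\frac12$.
\item Interpolating $X_{0,\frac12+}\subset L^\infty_tL^2_x$ with $X_{0,0}=L^2_{xt}$ only produces spaces with spatial exponent $2$, never $L^4_{xt}$.
\end{itemize}
\end{itemize}

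The missing ingredient is the $L^8$ Strichartz estimate \eqref{2.07}, $\|u\|_{L^8_{xt}}\le C\|u\|_{X_{0,b}}$. It holds uniformly because $\phi'''(\xi)=6+6\xi^{-4}\ge6$, so van der Corput gives the $|t|^{-1/3}$ dispersive bound despite the inflection points $\pm3^{-1/4}$. Interpolate it with $\|u\|_{L^2_{xt}}=\|u\|_{X_{0,0}}$ at $\theta=\frac23$, since $\frac14=\frac{\theta}{8}+\frac{1-\theta}{2}$. This gives $\|u\|_{L^4_{xt}}\le C\|u\|_{X_{0,\frac23 b}}$, and H\"older then yields $\|u_1u_2\|_{L^2_{xt}}\le C\|u_1\|_{X_{0,\frac23b}}\|u_2\|_{X_{0,\frac23b}}$. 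This is exactly where the factor $\frac23 b$, and hence $\frac{2+2s}{3}b$, in the lemma comes from. With this endpoint in place, the rest of your argument goes through.
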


For the proof of Lemma 2.2, we refer the readers to \cite{G2002}.

\begin{Lemma}\label{Lemma2.3}(Strichartz estimates related to $X_{s,b}.$)
Let $8\leq q<\infty,N\geq2, 0<M\leq2$,  $s=\frac{1}{8}-\frac{1}{q}$, $s_{1}>\frac{1}{4}$, $b=\frac{1}{2}+\frac{\epsilon}{24}$
 and $0<\epsilon\leq 10^{-3}$. Then, we have
\begin{align}
&\|u\|_{L_{xt}^{q}}\leq C\|u\|_{X_{4s,b}},\label{2.05}\\
&\|D_{x}^{\frac{1}{6}}P^{ N}u\|_{L_{xt}^{6}}\leq C\|u\|_{X_{0,b}},\label{2.06}\\
&\|u\|_{L_{xt}^{8}}\leq C\|u\|_{X_{0,b}}\label{2.07},\\
&\|u\|_{L_{xt}^{\frac{8}{1+\epsilon}}}\leq C\|u\|_{X_{0,\frac{3-\epsilon}{3}b}}
\leq C\|u\|_{X_{0,\frac{1}{2}-\frac{\epsilon}{12}}}\label{2.08},\\
&\|\partial_{x}P^{ N}u\|_{L_{x}^{\infty}L_{t}^{2}}\leq C\|u\|_{X_{0,b}},\label{2.09}\\
&\|D_{x}^{s_{1}}\Psi(t)P_{ M}u\|_{L_{x}^{2}L_{t}^{\infty}}\leq C\|u\|_{X_{0,b}},\label{2.010}\\
&\|D_{x}^{1-2\epsilon}P^{ N}u\|_{L_{x}^{\frac{1}{\epsilon}}L_{t}^{2}}\leq C
\|u\|_{X_{0,(1-2\epsilon)b}}\label{2.011}.
\end{align}
\end{Lemma}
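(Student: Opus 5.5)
We need to prove Lemma 2.3, the Strichartz estimates. The plan is to derive each one from a corresponding estimate for free solutions $U(t)f$ and then transfer it to $X_{s,b}$ with $b>\frac12$. The transfer is the standard one. Writing $u(x,t)=\frac{1}{2\pi}\int e^{it\lambda}U(t)f_{\lambda}\,d\lambda$ with $\mathscr{F}_{x}f_{\lambda}(\xi)=\mathscr{F}u(\xi,\lambda+\phi(\xi))$, Minkowski's inequality and Cauchy--Schwarz in $\lambda$ turn a bound $\|U(t)f\|_{Y}\leq C\|f\|_{H^{\sigma}}$ into $\|u\|_{Y}\leq C\|u\|_{X_{\sigma,b}}$ for any $b>\frac12$. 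This works for every mixed norm $Y=L^{p}_{x}L^{q}_{t}$ with $p,q\geq1$, and it also works with a Fourier multiplier ($D_{x}^{\alpha}P^{N}$, $\Psi(t)P_{M}$) in front, since that multiplier commutes with the $\lambda$-integral.

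\textbf{Linear estimates.} I would prove the free estimates as follows.

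For (2.09), the Kato-type smoothing bound $\|\partial_{x}P^{N}U(t)f\|_{L^{\infty}_{x}L^{2}_{t}}\leq C\|f\|_{L^{2}}$ comes from the change of variables $\eta=\phi(\xi)$ on $|\xi|>N\geq2$. There $\phi'(\xi)=3\xi^{2}+\xi^{-2}\sim\xi^{2}$ and $\phi$ is monotone on each half-line. Plancherel in $t$ gives $\int|\xi|^{2}|\hat f|^{2}/|\phi'(\xi)|\,d\xi\lesssim\|f\|_{L^{2}}^{2}$.

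For (2.06), I would use the Kenig--Ponce--Vega oscillatory integral bound $\|D_{x}^{\frac12}\int e^{ix\xi+it\phi(\xi)}\chi_{|\xi|>N}\,d\xi\|_{L^{\infty}_{x}}\lesssim|t|^{-1/2}$. Since $|\phi''(\xi)|=|6\xi-2\xi^{-3}|\sim|\xi|$ away from the zero set, this follows from van der Corput with weight $|\phi''|^{1/2}$. The $TT^{*}$ argument then gives $\|D_{x}^{1/(2\cdot 3)\cdot}\ldots\|$; concretely, interpolating the $L^{2}$ and dispersive bounds gives $\|D_{x}^{1/6}P^{N}U(t)f\|_{L^{6}_{xt}}\lesssim\|f\|_{L^{2}}$.

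For (2.10), the low-frequency maximal estimate for $|\xi|\leq M\leq2$ is delicate because $\phi$ is singular at $0$. I would bound $\sup_{t}|\Psi(t)U(t)P_{M}D_{x}^{s_{1}}f|$ pointwise by a kernel argument with $|\xi|^{s_{1}}$ weight, or by Sobolev in $t$: the $H^{1/2+}_{t}$ norm of $\Psi(t)e^{it\phi}$ is controlled by $|\xi|^{-1/2-}$. The factor $D_{x}^{s_{1}}$ with $s_{1}>\frac14$ then compensates only partially, so this route falls short. The better choice is the $TT^{*}$ approach with the maximal kernel $\sup_{t}|\int_{|\xi|\le M}|\xi|^{2s_{1}}e^{i(x\xi+t\phi)}d\xi|\lesssim\langle x\rangle^{-1}$-type bounds.

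\textbf{Estimates obtained by interpolation.} For (2.07), I would combine (2.06) with the low-frequency part. Then (2.05) follows by Sobolev/interpolation between the $L^{8}$ estimate and the trivial estimate $\|u\|_{L^{\infty}_{xt}}\lesssim\|u\|_{X_{\frac12+,b}}$. For the estimates with $b<\frac12$, namely (2.08) and (2.11), I would interpolate (Stein complex interpolation with analytic families $\langle\sigma\rangle^{z}$) between the $b>\frac12$ estimates and the trivial $X_{0,0}=L^{2}_{xt}$ bound. For (2.11) this means interpolating (2.09) against $L^{2}_{xt}$, which gives exponents $1-2\epsilon$ and $L^{1/\epsilon}_{x}L^{2}_{t}$.

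\textbf{Main obstacle.} I expect the hardest step to be the low-frequency pieces, (2.10) and the low-frequency part of (2.07), because $\phi'(\xi)\sim\xi^{-2}$ blows up as $\xi\to0$. For (2.07) I would treat low frequencies with Bernstein in $x$ plus the $L^{2}$-in-$t$ boundedness of $X_{0,b}$. For (2.10) I would first try splitting dyadically in $|\xi|\sim2^{-j}$ and summing using the $|\xi|^{s_{1}}$ gain.
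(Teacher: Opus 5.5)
\textbf{Gap in (2.10).} The paper gives no argument of its own for this lemma; it only refers to Lemma 2.2 of the earlier Yan--Yan paper. For (2.06), (2.08), (2.09) and (2.11) your scheme is the standard one and works: the usual transference from $U(t)f$ to $X_{s,b}$ with $b>\frac12$, Kato smoothing, the Kenig--Ponce--Vega dispersive bound with $TT^*$, and interpolation against $X_{0,0}=L^2_{xt}$.

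The genuine gap is (2.10). It is the only non-standard estimate in the list, and you flag it as the main obstacle but never prove it. The one concrete route you name is $TT^*$ with $\sup_t|K(x,t)|\lesssim\langle x\rangle^{-1}$, and it does not close. Schur's test needs $\sup_{|t|\le 4}|K(\cdot,t)|\in L^1(\R)$, and $\langle x\rangle^{-1}$ is not integrable. There is a second problem: with the sharp cutoff $\chi_{\{|\xi|\le M\}}$ inside $K$, integration by parts at $\xi=\pm M$ produces a genuine $|x|^{-1}$ tail. So you should prove the bound with a smooth cutoff equal to $1$ on $[-2,2]$ and then apply it to $P_Mf$.

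\textbf{Missing computation.} What you need is the computation that produces the threshold $s_1>\frac14$.
\begin{itemize}
\item On the block $|\xi|\sim 2^{-j}$, put $\xi=2^{-j}\eta$, $x'=2^{-j}x$, $t'=2^{j}t$. Since $\phi(2^{-j}\eta)=2^{j}(2^{-4j}\eta^{3}-\eta^{-1})$, the window $|t|\le 2$ becomes a unit-frequency problem with nondegenerate phase on a time interval of length $T\sim 2^{j}$, and the $L^2_x$ norms are preserved exactly.
\item In that problem, van der Corput and non-stationary phase give $\sup_{|t'|\lesssim T}|K(x',t')|\lesssim\langle x'\rangle^{-1/2}$ for $|x'|\lesssim T$, with rapid decay beyond. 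Hence the kernel has $L^1$ norm $\lesssim T^{1/2}$.
\item Therefore $\|\sup_{|t|\le 2}|U(t)f_j|\|_{L^2_x}\lesssim 2^{j/4}\|f_j\|_{L^2}$ when $\mathscr{F}_xf_j$ is supported in $|\xi|\sim 2^{-j}$. This is sharp, as a packet of frequency width $T^{-1/2}$ shows.
\item The factor $D_x^{s_1}$ contributes $2^{-js_1}$. Cauchy--Schwarz in $j$ then sums $2^{j(\frac14-s_1)}$ exactly when $s_1>\frac14$.
\item The finitely many blocks with $|\xi|\sim 1$, including the zero of $\phi''$ at $|\xi|=3^{-1/4}$, are trivial for $|t|\le 2$.
\end{itemize}
Equivalently, the undecomposed smooth-cutoff kernel satisfies $\sup_{|t|\le 4}|K(x,t)|\lesssim\langle x\rangle^{-\frac34-s_1}$. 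This decay, not $\langle x\rangle^{-1}$, is what makes your $TT^*$ route work.

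\textbf{Smaller points on (2.05) and (2.07).} Interpolating (2.07) with $\|u\|_{L^\infty_{xt}}\lesssim\|u\|_{X_{\frac12+\epsilon',b}}$ only gives (2.05) with regularity $(1-\frac8q)(\frac12+\epsilon')$, which is strictly larger than $4s$. To get exactly $4s=\frac12-\frac4q$, proceed as follows.
\begin{itemize}
\item High frequencies: use the Kenig--Ponce--Vega estimate $\|D_x^{1/q}P^{2}U(t)f\|_{L^q_tL^{2q/(q-4)}_x}\lesssim\|f\|_{L^2}$, then Hardy--Littlewood--Sobolev in $x$.
\item Low frequencies: use Bernstein together with $\|u\|_{L^q_tL^2_x}\le\|u\|_{L^\infty_tL^2_x}^{1-2/q}\|u\|_{L^2_{xt}}^{2/q}\lesssim\|u\|_{X_{0,b}}$.
\end{itemize}
The case $q=8$ of this argument is also how (2.07) is obtained. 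At high frequency, (2.07) does not follow from (2.06) combined with a low-frequency piece, since (2.06) sits at a different point of the Strichartz scale. You would first have to interpolate (2.06) with the energy bound and then apply Sobolev in $x$.
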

\noindent{\bf Proof.} For the proof of Lemma 2.3,  we refer the readers to Lemma 2.2 of  \cite{YY}.

\begin{Lemma}\label{Lemma2.4}(Strichartz estimates related to $X_{s,b}.$)
Let $N\geq2$ and $b=\frac{1}{2}+\frac{\epsilon}{24}$
 and $0<\epsilon\leq 10^{-3}$. Then, we have
\begin{align}
&\|D_{x}^{\frac{2-\epsilon}{12}}P^{ N}u\|_{L_{xt}^{\frac{6}{1+\epsilon}}}
\leq C\|u\|_{X_{0,\frac{2-\epsilon}{2}b}}\leq C\|u\|_{X_{0,\frac{1}{2}-\frac{\epsilon}{12}}}\label{2.012},\\
&\left\|D_{x}^{\frac{1}{6}-\epsilon}P^{N}u\right\|_{L_{xt}^{\frac{12}{2-3\epsilon}}}\leq C
\|u\|_{X_{0,b}}\label{2.013}.
\end{align}
\end{Lemma}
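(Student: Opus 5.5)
The plan is to obtain both estimates of Lemma \ref{Lemma2.4} by interpolating between estimates already available in Lemma \ref{Lemma2.3}, via the Stein complex interpolation theorem applied to an analytic family of operators in the $X_{s,b}$ framework. The basic building blocks are the Strichartz-type estimate \eqref{2.06}, $\|D_{x}^{\frac{1}{6}}P^{N}u\|_{L_{xt}^{6}}\leq C\|u\|_{X_{0,b}}$, and the trivial identity $\|P^{N}u\|_{L_{xt}^{2}}\leq \|u\|_{X_{0,0}}$ (Plancherel). Since $N\geq 2$ keeps us away from the singular point $\xi=0$ of $\phi(\xi)=\xi^{3}-\xi^{-1}$, the high-frequency projection lets us treat $\phi$ as a perturbation of the Airy phase.

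For \eqref{2.012}, I would consider the analytic family $T_{z}u=D_{x}^{\frac{z}{6}}P^{N}\mathscr{F}^{-1}\big(\langle\sigma\rangle^{-zb}\mathscr{F}u\big)$ on the strip $0\leq \Re z\leq 1$. At $\Re z=0$ this maps $L^{2}_{xt}\to L^{2}_{xt}$ boundedly, since imaginary powers of $|\xi|$ and $\langle\sigma\rangle$ are unimodular multipliers. At $\Re z=1$, \eqref{2.06} gives $L^{2}\to L^{6}_{xt}$, with growth in $\Im z$ at most polynomial, because $|\xi|^{i\theta}$ commutes with the estimate and $\langle\sigma\rangle^{-i\theta b}$ can be absorbed by the transfer principle. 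Choosing $\theta=\frac{2-\epsilon}{2}$, we have $\frac{1}{p}=\frac{1-\theta}{2}+\frac{\theta}{6}=\frac{1}{2}-\frac{\theta}{3}=\frac{1+\epsilon}{6}$, so $p=\frac{6}{1+\epsilon}$, with derivative gain $\frac{\theta}{6}=\frac{2-\epsilon}{12}$ and weight $\theta b$. This is exactly the first inequality of \eqref{2.012}. The second inequality follows from monotonicity of $X_{0,b}$ in $b$, since $\frac{2-\epsilon}{2}\big(\frac12+\frac{\epsilon}{24}\big)\leq \frac12-\frac{\epsilon}{12}$ for small $\epsilon$; a direct check confirms the bound.

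For \eqref{2.013}, I would interpolate \eqref{2.06} with the $L^{\infty}$-type endpoint. One option is to use \eqref{2.05} or \eqref{2.07}, for instance $\|u\|_{L^{8}_{xt}}\leq C\|u\|_{X_{0,b}}$, or \eqref{2.05} with large $q$ and a Sobolev loss. Interpolating $D^{1/6}$ in $L^{6}$ against $D^{0}$ in $L^{q}$ (both at weight $b$) with parameter $\theta$ gives $\frac{1}{p}=\frac{\theta}{6}+\frac{1-\theta}{q}$ and a derivative gain of $\frac{\theta}{6}$. We need $\frac{\theta}{6}=\frac16-\epsilon$, i.e. $\theta=1-6\epsilon$, and $\frac1p=\frac{2-3\epsilon}{12}$. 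Solving, $\frac{1-\theta}{q}=\frac{2-3\epsilon}{12}-\frac{1-6\epsilon}{6}=\frac{9\epsilon}{12}$, so $q=8$. The $L^{8}$ endpoint is therefore precisely \eqref{2.07}, and the interpolation again runs through the Stein theorem with the family $D_{x}^{z/6}P^{N}$ at fixed weight $b$.

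The main obstacle is justifying the complex interpolation in the $X_{s,b}$ setting, in particular controlling the imaginary-order multipliers $\langle\sigma\rangle^{i\theta b}$ and $|\xi|^{i\theta}$ with admissible growth. I would handle this by writing $u=\int e^{it\lambda}U(t)g_{\lambda}\,d\lambda$ (the transfer principle), so that each estimate reduces to a linear estimate for $U(t)P^{N}$ where the imaginary powers act harmlessly. I expect only routine bookkeeping of exponents beyond that.
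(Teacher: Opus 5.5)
Your proposal is correct and matches the paper's proof: the paper uses the same analytic families in Stein's complex interpolation theorem. For \eqref{2.012} it takes the multiplier $|\xi|^{z/6}\langle\sigma\rangle^{-zb}$ and interpolates Plancherel against \eqref{2.06} at $\theta=\frac{2-\epsilon}{2}$; for \eqref{2.013} it takes $|\xi|^{z/6}\langle\sigma\rangle^{-b}$ and interpolates \eqref{2.07} against \eqref{2.06} at $\theta=1-6\epsilon$. Your appeal to the transfer principle is unnecessary: the imaginary powers $|\xi|^{iy/6}\langle\sigma\rangle^{-iyb}$ are unimodular Fourier multipliers, so they leave the $L^{2}$ and $X_{0,b}$ norms unchanged and the endpoint bounds are uniform in $\Im z$ directly.
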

\noindent{\bf Proof.} Define
\begin{eqnarray}
G_{z}u:=C\int_{\SR^{2}}e^{ix\xi+it\tau}\left(\frac{|\xi|}{\langle\sigma\rangle^{6b}}\right)^{\frac{z}{6}}\mathscr{F}P^{N}u(\xi,\tau)d\xi d\tau\label{2.014}.
\end{eqnarray}
Obviously, $G_{z}$ is analytic on $S:=\left\{z|z=x+iy,x\in(0,1)\right\}$ and continuous on $\bar{S}:=\left\{z|z=x+iy,x\in[0,1]\right\}$.
By using the Plancherel identity, we have
\begin{eqnarray}
\left\|G_{iy}u\right\|_{L_{xt}^{2}}=C\left\|\left(\frac{|\xi|}{\langle\sigma\rangle^{6b}}\right)^{\frac{iy}{6}}
\mathscr{F}P^{N}u(\xi,\tau)\right\|_{L_{\xi\tau}^{2}}\leq C\|u\|_{L_{xt}^{2}}\label{2.015}.
\end{eqnarray}
From (\ref{2.06}), we have
\begin{eqnarray}
\left\|G_{1+iy}u\right\|_{L_{xt}^{6}}\leq C\|u\|_{L_{xt}^{2}}\label{2.016}.
\end{eqnarray}
By using Stein complex interpolation and (\ref{2.015})-(\ref{2.016}),
we have
\begin{eqnarray}
\left\|G_{\frac{2-\epsilon}{2}}u\right\|_{L_{xt}^{\frac{6}{1+\epsilon}}}\leq C\|u\|_{L_{xt}^{2}}\label{2.017}.
\end{eqnarray}
From (\ref{2.017}), we have that (\ref{2.012}) is valid.

Define
\begin{eqnarray}
T_{z}u:=C\int_{\SR^{2}}e^{ix\xi+it\tau}|\xi|^{\frac{z}{6}}\langle\sigma\rangle^{-b}\mathscr{F}P^{N}u(\xi,\tau)d\xi d\tau\label{2.018}.
\end{eqnarray}
Obviously, $T_{z}$ is analytic on $S:=\left\{z|z=x+iy,x\in(0,1)\right\}$ and continuous on $\bar{S}:=\left\{z|z=x+iy,x\in[0,1]\right\}$.
From (\ref{2.07}), we have
\begin{eqnarray}
\left\|T_{iy}u\right\|_{L_{xt}^{8}}\leq C\|u\|_{L_{xt}^{2}}\label{2.019}.
\end{eqnarray}
From (\ref{2.06}), we have
\begin{eqnarray}
\left\|T_{1+iy}u\right\|_{L_{xt}^{6}}\leq C\|u\|_{L_{xt}^{2}}\label{2.020}.
\end{eqnarray}
By using Stein complex interpolation  and (\ref{2.019})-(\ref{2.020}),
we have
\begin{eqnarray}
\left\|T_{1-6\epsilon}u\right\|_{L_{xt}^{\frac{12}{2-3\epsilon}}}\leq C\|u\|_{L_{xt}^{2}}\label{2.021}.
\end{eqnarray}
(\ref{2.021}) yields that (\ref{2.013}) is valid.

We completed the proof of Lemma 2.4.

\begin{Lemma}\label{Lemma2.5}(Maximal function estimates related to $X_{s,b}$ with high frequency.)
Let $f\in L_{x}^{2}(\R)$ and $\gamma\geq 4$ and $\supp\mathscr{F}_{x}f\subseteq[N,4N]\cup[-4N,-N]$, where $N\in2^{\z}$, $N\geq 1$. Then,  we have
\begin{eqnarray}
&&\left\|U(t)f(x)\right\|_{L_{x}^{\gamma}L_{t}^{\infty}}\leq CN^{\frac{1}{2}-\frac{1}{\gamma}}\left\|f\right\|_{L_{x}^{2}},\label{2.022}
\end{eqnarray}
where
\begin{eqnarray*}
&&U(t)f(x)=\int_{\SR}e^{ix\xi+it\phi(\xi)}\mathscr{F}_{x}f(\xi)\chi_{[N,4N]}(|\xi|)d\xi.
\end{eqnarray*}
Let $b>\frac{1}{2}$, $\gamma\geq4$ and $\supp\mathscr{F}_{x}u\subseteq[N,4N]\cup[-4N,-N]$, where $N\in2^{\z},N\geq1$. Then, we have
\begin{eqnarray}
&&\left\|u\right\|_{L_{x}^{\gamma}L_{t}^{\infty}}\leq CN^{\frac{1}{2}-\frac{1}{\gamma}}\left\|u\right\|_{X_{0,b}}.\label{2.023}
\end{eqnarray}
Let $b>\frac{1}{2}$, $\gamma\geq4$ and $\supp\mathscr{F}_{x}u\subseteq[1,\infty)\cup(-\infty,-1]$. Then,  we have
\begin{eqnarray}
&&\left\|u\right\|_{L_{x}^{\gamma}L_{t}^{\infty}}\leq C\left\|u\right\|_{X_{s,b}}.\label{2.024}
\end{eqnarray}
Here $s=\frac{1}{2}-\frac{1}{\gamma}+\epsilon$.

\end{Lemma}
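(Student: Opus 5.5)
The plan has three steps, one for each of the three estimates in Lemma \ref{Lemma2.5}.

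\textbf{Step 1: the free estimate (\ref{2.022}).} I would follow the Kenig--Ponce--Vega scheme for high-frequency maximal estimates, adapted to the phase $\phi(\xi)=\xi^{3}-\xi^{-1}$. On the support $N\le|\xi|\le4N$ with $N\ge1$ we have $\phi''(\xi)=6\xi-2\xi^{-3}$, hence $|\phi''(\xi)|\sim N$. The singular point $\xi=0$ is far from the support, so the phase behaves exactly like the Airy phase there. The argument runs as follows.
\begin{itemize}
\item Apply a $TT^{*}$ argument to reduce to bounding the kernel $K_{t}(x)=\int e^{ix\xi+it\phi(\xi)}\chi_{[N,4N]}(|\xi|)\,d\xi$ in the form $\sup_{t}|K_{t}(x)|\le C\min\{N,\,N^{1/2}|x|^{-1/2}\}$.
\item The bound $N$ is trivial from the size of the support.
\item The bound $N^{1/2}|x|^{-1/2}$ comes from van der Corput when $|x|\gtrsim|t|N^{2}$ (then $|\phi'(\xi)|\lesssim|t|N^{2}$ and the stationary region is controlled), and from the second-derivative van der Corput bound $(|t|N)^{-1/2}\lesssim N^{1/2}|x|^{-1/2}$ otherwise. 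In the second regime I would smooth the cutoff $\chi$, or split it dyadically and use its bounded variation.
\item From this kernel bound, $\sup_{t}|K_{t}|\in L^{\gamma/2,\infty}$-type spaces with the appropriate power of $N$. Hardy--Littlewood--Sobolev, or Young plus interpolation between $\gamma=4$ (where $\sup_t|K_t|\lesssim N^{1/2}|x|^{-1/2}$ fits HLS for $L^{4/3}\to L^{4}$) and $\gamma=\infty$ (where the bound is $N^{1/2}\|f\|_{2}$ by Cauchy--Schwarz), gives $\|U(t)f\|_{L^{\gamma}_{x}L^{\infty}_{t}}\le CN^{\frac12-\frac1\gamma}\|f\|_{L^{2}}$.
\end{itemize}

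\textbf{Step 2: the transference (\ref{2.023}).} This is the standard transference principle. I would write
\[
u(x,t)=\int e^{it\lambda}\bigl[U(t)f_{\lambda}\bigr](x)\,d\lambda,
\]
where $\mathscr{F}_{x}f_{\lambda}(\xi)=\mathscr{F}u(\xi,\lambda+\phi(\xi))$. Minkowski's inequality and (\ref{2.022}) then give
\[
\|u\|_{L^{\gamma}_{x}L^{\infty}_{t}}\le CN^{\frac12-\frac1\gamma}\int\|f_{\lambda}\|_{L^{2}}\,d\lambda.
\]
Cauchy--Schwarz in $\lambda$ with the weight $\langle\lambda\rangle^{-b}$, which is square integrable because $b>1/2$, yields $CN^{\frac12-\frac1\gamma}\|u\|_{X_{0,b}}$.

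\textbf{Step 3: summing over dyadic blocks for (\ref{2.024}).} I would decompose $u=\sum_{N\ge1}u_{N}$ with Littlewood--Paley pieces supported in $N\le|\xi|\le4N$ (overlaps are harmless). The triangle inequality and (\ref{2.023}) give
\[
\|u\|_{L^{\gamma}_{x}L^{\infty}_{t}}\le C\sum_{N}N^{\frac12-\frac1\gamma}\|u_{N}\|_{X_{0,b}}\le C\sum_{N}N^{-\epsilon}\|u_{N}\|_{X_{s,b}}.
\]
Cauchy--Schwarz over $N\in2^{\mathbb Z}$, $N\ge1$, together with $\sum N^{-2\epsilon}<\infty$ and almost orthogonality, gives $C\|u\|_{X_{s,b}}$ with $s=\frac12-\frac1\gamma+\epsilon$.

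The main obstacle is the kernel estimate in Step 1, specifically obtaining a uniform-in-$t$ decay $|x|^{-1/2}$ with the correct $N$ power from the sharp cutoff $\chi_{[N,4N]}$. The term $-\xi^{-1}$ in $\phi$ must also not spoil the lower bounds on $\phi''$ and $\phi'$. To handle this I would first replace $\chi$ by a smooth bump and recover the sharp cutoff by a bounded multiplier argument in $x$.
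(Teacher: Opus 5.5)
Your proposal is correct. The paper gives no proof of Lemma \ref{Lemma2.5}; it simply cites Lemma 2.9 of \cite{YY}. So the useful comparison is with the tools the paper does write out.

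\textbf{Steps 2 and 3.} Your Step 2 is exactly the transference computation the paper carries out in the proof of Lemma \ref{Lemma2.8}, see \eqref{2.032}--\eqref{2.035}, with \eqref{2.022} used in place of \eqref{2.031}. Your Step 3 is the routine dyadic summation that converts the loss $N^{\frac12-\frac1\gamma}$ into the weight $\langle\xi\rangle^{s}$ with $s=\frac12-\frac1\gamma+\epsilon$.

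\textbf{Step 1.} This is where you genuinely go beyond the paper. The paper never proves an $L^{4}_{x}L^{\infty}_{t}$ endpoint; elsewhere it imports such bounds from the literature (Lemma \ref{Lemma5.1}, taken from \cite{GHO}). You instead derive it from scratch via $TT^{*}$, van der Corput and Hardy--Littlewood--Sobolev. This makes transparent that only $|\xi|\geq1$ is used. On that region $\phi'(\xi)=3\xi^{2}+\xi^{-2}\leq49N^{2}$ and $|\phi''(\xi)|\geq4|\xi|\geq4N$. Moreover $\phi''$ has a fixed sign on each of $[N,4N]$ and $[-4N,-N]$, so the singularity of $\phi$ at $0$ never enters.

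\textbf{Small points to tidy.}
\begin{itemize}
\item Smoothing the cutoff is unnecessary. Both van der Corput bounds apply directly on each sharp interval, because $x+t\phi'(\xi)$ is monotone there. If you do smooth, keep the enlarged support inside $|\xi|>3^{-1/4}$, where $\phi''\neq0$; this matters for $N=1$.
\item In the non-stationary regime you obtain $|x|^{-1}$. This is dominated by $N^{\frac12}|x|^{-\frac12}$ only when $|x|\geq N^{-1}$. For $|x|<N^{-1}$ the trivial bound $N$ is the operative one, which is consistent with your minimum.
\item For $\gamma>4$ no abstract interpolation is needed. You can apply Young directly, using $\|\min\{N,N^{\frac12}|x|^{-\frac12}\}\|_{L^{\gamma/2}}\lesssim N^{1-\frac2\gamma}$. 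Alternatively, use H\"older: $\|h\|_{L^{\gamma}}\leq\|h\|_{L^{4}}^{4/\gamma}\|h\|_{L^{\infty}}^{1-4/\gamma}$ with $h=\sup_{t}|U(t)f|$.
\end{itemize}
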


For the proof of Lemma 2.5,  we refer the readers to Lemma 2.9 of \cite{YY}.

\begin{Lemma}\label{Lemma2.6}(Maximal function estimate related to $X_{s,b}$ with high frequency.)
Let $u\in X_{0,b}(b>\frac{1}{2})$ and $\supp\mathscr{F}_{x}u\subseteq[N,4N]\cup[-4N,-N](N\geq1)$.   Then,  we have
\begin{eqnarray}
&&\left\|u\right\|_{L_{xt}^{\infty}}\leq CN^{\frac{1}{2}}\left\|u\right\|_{X_{0,b}}.\label{2.025}
\end{eqnarray}

\end{Lemma}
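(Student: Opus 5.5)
The plan is to write $u$ through its space-time Fourier transform and bound it pointwise by an $L^1$ integral, then use Cauchy--Schwarz in $\tau$ and Plancherel in $\xi$. Set $\widehat{u}(\xi,\tau)=\mathscr{F}u(\xi,\tau)$, supported where $N\le|\xi|\le 4N$. Changing variables $\tau=\phi(\xi)+\lambda$ gives
\begin{eqnarray*}
u(x,t)=\frac{1}{2\pi}\int_{\SR}e^{it\lambda}\left(\int_{\SR}e^{ix\xi+it\phi(\xi)}\widehat{u}(\xi,\phi(\xi)+\lambda)d\xi\right)d\lambda
=\frac{1}{\sqrt{2\pi}}\int_{\SR}e^{it\lambda}\,U(t)f_{\lambda}(x)\,d\lambda,
\end{eqnarray*}
where $\mathscr{F}_{x}f_{\lambda}(\xi)=\widehat{u}(\xi,\phi(\xi)+\lambda)$. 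This is the standard transference step: it reduces an $X_{0,b}$ estimate to a linear estimate for $U(t)$, uniformly in $t$.

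For each fixed $\lambda$, I would estimate the free evolution crudely. Since $\mathscr{F}_{x}f_{\lambda}$ is supported in a set of measure at most $6N$, Cauchy--Schwarz in $\xi$ gives
\begin{eqnarray*}
\sup_{x,t}|U(t)f_{\lambda}(x)|\le C\int_{N\le|\xi|\le4N}|\mathscr{F}_{x}f_{\lambda}(\xi)|d\xi\le CN^{\frac12}\|\mathscr{F}_{x}f_{\lambda}\|_{L^{2}_{\xi}}.
\end{eqnarray*}
No dispersive or maximal-function input is needed. The $L^\infty_{xt}$ bound at dyadic frequency $N$ is the Bernstein-type loss $N^{1/2}$, which agrees with the $\gamma=\infty$ endpoint of \eqref{2.023}.

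Inserting this bound and integrating in $\lambda$, I write $1=\langle\lambda\rangle^{-b}\langle\lambda\rangle^{b}$ and apply Cauchy--Schwarz in $\lambda$. This gives
\begin{eqnarray*}
\|u\|_{L^{\infty}_{xt}}\le CN^{\frac12}\left(\int_{\SR}\langle\lambda\rangle^{-2b}d\lambda\right)^{\frac12}\left(\int_{\SR}\langle\lambda\rangle^{2b}\|\widehat{u}(\cdot,\phi(\cdot)+\lambda)\|_{L^{2}_{\xi}}^{2}d\lambda\right)^{\frac12}.
\end{eqnarray*}
The first factor is finite precisely because $b>\frac12$. For the second factor, I undo the change of variables, $\lambda=\tau-\phi(\xi)$, which has unit Jacobian for each fixed $\xi$; by Fubini it equals $\|u\|_{X_{0,b}}$. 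This yields \eqref{2.025}.

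The only delicate point is the singularity of $\phi(\xi)=\xi^{3}-\xi^{-1}$ at $\xi=0$. Because $|\xi|\ge N\ge1$ on the support, $\phi$ is smooth there and the change of variables is legitimate, so no real obstacle arises. As a more elegant alternative, one could apply the same Fourier-support argument directly to $\widehat{u}$ in both variables, getting $\|u\|_{L^\infty}\le C\int|\widehat{u}|\,d\xi\, d\tau$, then split with the weight $\langle\xi\rangle^{0}\langle\sigma\rangle^{b}$ and use $\int_{|\xi|\sim N}\int\langle\tau-\phi(\xi)\rangle^{-2b}d\tau\, d\xi\le CN$. This gives the same bound in one line.
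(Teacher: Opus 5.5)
Your proof is correct. The paper does not prove this lemma itself (it cites (2.31) of \cite{YYDH}), but your argument is the transference scheme the paper runs in the proof of Lemma \ref{Lemma2.8}: write $u=C\int_{\SR}e^{it\lambda}U(t)G\,d\lambda$ with $\mathscr{F}_{x}G=\mathscr{F}u(\xi,\phi(\xi)+\lambda)$, bound $\|U(t)G\|_{L^{\infty}_{xt}}$ by $\|\mathscr{F}_{x}G\|_{L^{1}_{\xi}}$, and apply Cauchy--Schwarz in $\lambda$ using $b>\frac12$. The only difference is that you control $\|\mathscr{F}_{x}G\|_{L^{1}_{\xi}}$ by $CN^{\frac12}\|\mathscr{F}_{x}G\|_{L^{2}_{\xi}}$ through the measure of the frequency support, rather than by $\|G\|_{H^{s}}$ with $s>\frac12$; your one-line alternative (Cauchy--Schwarz directly in $(\xi,\tau)$) is equally valid.
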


For the proof of Lemma 2.6,  we refer the readers to (2.31) of \cite{YYDH}.

\begin{Lemma}\label{Lemma2.7}(Maximal function estimates related to $X_{s,b}$ with low frequency.)
Let $\supp\mathscr{F}_{x}u\subset (0,2]\cup [-2,0)$ and $s_{1}\in (\frac{1}{4},\frac{1}{2})$. Then, we have
\begin{eqnarray}
&&\left\|P_{2}\Psi(t)u\right\|_{L_{xt}^{\infty}}
\leq C\left\|u\right\|_{X_{0,b}},\label{2.026}\\
&&\|D_{x}^{(1-2\epsilon)s_{1}}P_{2}\Psi(t)u\|_{L_{x}^{\frac{2}{1-2\epsilon}}L_{t}^{\infty}}\leq C
\|u\|_{X_{0,b}}.\label{2.027}
\end{eqnarray}
\end{Lemma}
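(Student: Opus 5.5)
The plan is to prove both estimates the same way: reduce them, via the Fourier inversion formula in $\tau$, to statements about free solutions $U(t)g$ with $\supp\mathscr{F}_{x}g\subset[-2,2]$. These are then controlled by elementary $L^{1}_{\xi}$ bounds, which are available because the frequency set is bounded.

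\emph{Step 1: reduction to free solutions.} Write $\mathscr{F}u(\xi,\tau)=\mathscr{F}u(\xi,\lambda+\phi(\xi))$ with $\lambda=\tau-\phi(\xi)$. Then
\begin{eqnarray*}
u(x,t)=C\int_{\SR}e^{it\lambda}\,U(t)g_{\lambda}(x)\,d\lambda,\qquad \mathscr{F}_{x}g_{\lambda}(\xi)=\mathscr{F}u(\xi,\lambda+\phi(\xi)).
\end{eqnarray*}
By Minkowski's inequality and Cauchy--Schwarz in $\lambda$ (using $b>\frac{1}{2}$, so $\langle\lambda\rangle^{-b}\in L^{2}_{\lambda}$), it suffices to show, for any translation- and modulation-invariant norm $\|\cdot\|_{Y}$,
\[
\|\Psi(t)P_{2}U(t)g\|_{Y}\leq C\|g\|_{L^{2}},
\]
since then $\|P_{2}\Psi(t)u\|_{Y}\leq C\int\|g_{\lambda}\|_{L^{2}}d\lambda\leq C\|u\|_{X_{0,b}}$. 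The factor $e^{it\lambda}$ has modulus one and is harmless for both norms in question, $L^{\infty}_{xt}$ and $L^{p}_{x}L^{\infty}_{t}$.

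\emph{Step 2: proof of \eqref{2.026}.} For \eqref{2.026} the reduced estimate is immediate:
\[
|\Psi(t)P_{2}U(t)g(x)|\leq C\int_{|\xi|\leq2}|\mathscr{F}_{x}g(\xi)|\,d\xi\leq C\|g\|_{L^{2}}
\]
by Cauchy--Schwarz on a bounded set. The singularity of $\phi$ at $\xi=0$ plays no role, because it sits only in a unimodular phase.

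\emph{Step 3: proof of \eqref{2.027}, the main obstacle.} Here we need
\[
\|D_{x}^{(1-2\epsilon)s_{1}}\Psi(t)U(t)P_{2}g\|_{L_{x}^{\frac{2}{1-2\epsilon}}L_{t}^{\infty}}\leq C\|g\|_{L^{2}}.
\]
I would obtain it by Stein complex interpolation, modeled on the proof of Lemma~\ref{Lemma2.4}. Consider the analytic family
\[
T_{z}g=\Psi(t)U(t)D_{x}^{zs_{1}}P_{2}g.
\]
\begin{itemize}
\item At $\Re z=1$, estimate \eqref{2.010} (applied to the free solution, with $M=2$) gives the $L^{2}_{x}L^{\infty}_{t}$ bound.
\item At $\Re z=0$, we have $|\xi|^{iy s_{1}}$ bounded, so Step 2 gives the $L^{\infty}_{x}L^{\infty}_{t}$ bound with constant independent of $y$.
\end{itemize}
Interpolating at $\theta=1-2\epsilon$ yields the exponent $\frac{1}{p}=\frac{\theta}{2}=\frac{1-2\epsilon}{2}$ and the derivative order $(1-2\epsilon)s_{1}$, exactly as claimed.

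The delicate points are the admissible growth in $y$, and ensuring that \eqref{2.010} holds for $s_{1}\in(\frac{1}{4},\frac{1}{2})$ uniformly along the line $z=1+iy$. The factor $|\xi|^{iys_{1}}$ is a Fourier multiplier of modulus one, and it commutes with $U(t)$, so it acts harmlessly on the input $g$ in $L^{2}$. If one prefers not to invoke \eqref{2.010}, a fallback for the $\Re z=1$ endpoint is a $TT^{*}$ argument: the kernel of $D_{x}^{s_{1}}P_{2}U(t)$ has decay $|x|^{-1/2-s_{1}}$ uniformly in $|t|\leq2$, coming from stationary phase on $\xi^{3}-\xi^{-1}$ near $\xi=0$, where $\phi''$ is large. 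That decay would give the needed square-integrable majorant.
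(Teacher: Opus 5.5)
Your proof is correct. For \eqref{2.027} it follows the paper: the paper also obtains \eqref{2.027} by interpolating \eqref{2.026} with \eqref{2.010}. Your analytic family $T_z=\Psi(t)U(t)D_x^{zs_1}P_2$, with $\sup_t$ linearized by a measurable choice $t=t(x)$ and followed by the $\lambda$-transference, just makes that one-line step explicit.

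The genuine difference is in \eqref{2.026}. The paper's proof goes as follows.
\begin{itemize}
\item It splits $(0,2]$ into dyadic blocks $[2^{k-1},2^{k+1}]$, $k\le0$, and uses Cauchy--Schwarz on each block.
\item It passes from $L^\infty_tL^2_x$ to $L^2_xL^\infty_t$ by Minkowski and inserts $D_x^{s_1}$ at the price $2^{-ks_1}$.
\item It applies \eqref{2.010} blockwise and sums via $\sum_{k\le0}2^{k(1-2s_1)}<\infty$. This is the only place the hypothesis $s_1<\frac12$ enters.
\end{itemize}
You instead use a single Cauchy--Schwarz on $|\xi|\le2$ for free solutions. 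You then transfer to $X_{0,b}$ through $u=C\int e^{it\lambda}U(t)g_\lambda\,d\lambda$, the same device as in Lemma~\ref{Lemma2.8}. Your route is shorter and does not use \eqref{2.010}. It also shows that \eqref{2.026} needs no condition on $s_1$, so for \eqref{2.027} only the requirement $s_1>\frac14$ inherited from \eqref{2.010} matters.

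One caution about your optional fallback for the $\Re z=1$ endpoint: you do not need it, but it is misstated.
\begin{itemize}
\item A $TT^*$ proof of an $L^2\to L^2_xL^\infty_t$ bound needs an \emph{integrable} majorant of $\sup_{|t|\le4}\bigl|\int|\xi|^{2s_1}\tilde\chi(\xi)e^{ix\xi+it\phi(\xi)}d\xi\bigr|$, so that Young's inequality $L^1*L^2\subset L^2$ applies. A square-integrable majorant is not enough.
\item The stationary point is at $\xi_0\approx\pm\sqrt{|t|/|x|}$, where the second derivative of the phase has size about $|x|^{3/2}|t|^{-1/2}$. It contributes roughly $|t|^{s_1+\frac14}|x|^{-\frac34-s_1}$, which is integrable exactly when $s_1>\frac14$.
\item The cutoff $\tilde\chi$ must be smooth (and the operator then applied to $P_2g$). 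A sharp cutoff produces non-integrable $|x|^{-1}$ endpoint terms.
\end{itemize}
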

\noindent{\bf Proof.} We only consider  $\supp\mathscr{F}_{x}u\subset (0,2]$ since the case $\supp\mathscr{F}_{x}u\subset  [-2,0)$
can be obtained similarly.
By using the Cauchy-Schwarz inequality and the Plancherel identity,
we have
\begin{eqnarray}
&&\|P_{2}\Psi(t)u\|_{L_{x}^{\infty}}=\frac{1}{\sqrt{2\pi}}
\left\|\sum\limits_{k=-\infty}^{0}\int_{2^{k-1}}^{2^{k+1}}
e^{ix\xi}\Psi(t)\mathscr{F}_{x}u(\xi,t)d\xi\right\|_{L_{x}^{\infty}}\nonumber\\
&&\leq C\sum\limits_{k=-\infty}^{0}\int_{2^{k-1}}^{2^{k+1}}
|\Psi(t)\mathscr{F}_{x}u(\xi,t)|d\xi\nonumber\\
&&\leq C\sum\limits_{k=-\infty}^{0}2^{\frac{k}{2}}\left[\int_{2^{k-1}}^{2^{k+1}}
|\Psi(t)\mathscr{F}_{x}u(\xi,t)|^{2}d\xi\right]^{\frac{1}{2}}\nonumber\\
&&\leq  C\sum\limits_{k=-\infty}^{0}2^{\frac{k}{2}}\left\|\mathscr{F}_{x}^{-1}
\left(\chi_{[2^{k-1},2^{k+1}]}(\xi)\Psi(t)\mathscr{F}_{x}u(\xi,t)\right)\right\|_{L_{x}^{2}}.
\label{2.028}
\end{eqnarray}
By using (\ref{2.028}), Minkowski's inequality and (\ref{2.010}),   we have
\begin{eqnarray}
&&\|P_{2}\Psi(t)u\|_{L_{x}^{\infty}L_{t}^{\infty}}\leq  C\sum\limits_{k=-\infty}^{0}2^{\frac{k}{2}}\left\|\mathscr{F}_{x}^{-1}
\left(\chi_{[2^{k-1},2^{k+1}]}(\xi)\Psi(t)\mathscr{F}_{x}u(\xi,t)\right)\right\|_{L_{t}^{\infty}L_{x}^{2}}\nonumber\\
&&\leq C\sum\limits_{k=-\infty}^{0}2^{\frac{k}{2}}\left\|\mathscr{F}_{x}^{-1}
\left(\chi_{[2^{k-1},2^{k+1}]}(\xi)\Psi(t)\mathscr{F}_{x}u(\xi,t)\right)\right\|_{L_{x}^{2}L_{t}^{\infty}}\nonumber\\
&&\leq C\sum\limits_{k=-\infty}^{0}2^{\frac{k}{2}-s_{1}k}\left\|D_{x}^{s_{1}}
\mathscr{F}_{x}^{-1}\left(\chi_{[2^{k-1},2^{k+1}]}(\xi)\Psi(t)\mathscr{F}_{x}u(\xi,t)\right)\right\|_{L_{x}^{2}L_{t}^{\infty}}\nonumber\\
&&\leq C\sum\limits_{k=-\infty}^{0}2^{\frac{k}{2}-s_{1}k}\left\|\mathscr{F}_{x}^{-1}
\left(\chi_{[2^{k-1},2^{k+1}]}(\xi)\Psi(t)\mathscr{F}_{x}u(\xi,t)\right)\right\|_{X_{0,b}}\nonumber\\
&&\leq C\left[\sum\limits_{k=-\infty}^{0}2^{k-2s_{1}k}\right]^{\frac{1}{2}}\left[\sum\limits_{k=-\infty}^{0}
\left\|\mathscr{F}_{x}^{-1}\left(\chi_{[2^{k-1},2^{k+1}]}(\xi)\Psi(t)\mathscr{F}_{x}u(\xi,t)\right)\right\|_{X_{0,b}}^{2}\right]^{\frac{1}{2}}
\nonumber\\
&&\leq C\|u\|_{X_{0,b}}.\label{2.029}
\end{eqnarray}
Here $\frac{1}{4}<s_{1}<\frac{1}{2}.$
Interpolating (\ref{2.026}) with (\ref{2.010}) yields  (\ref{2.027}).

We completed the proof of Lemma 2.7.

\noindent{\bf Remark 6:} In fact,  if we replace   $\supp\mathscr{F}_{x}u\subset
 (0,2]\cup [-2,0)$ with $\supp\mathscr{F}_{x}u\subset (0,M]\cup [-M,0)$ for arbitrary $M>0,$
Lemma 2.7 is also true.

\begin{Lemma}\label{Lemma2.8}(Maximal function estimates related to $X_{s,b}$.)
Let $s>\frac{1}{2},b>\frac{1}{2}$. Then, we have
\begin{eqnarray}
&&\left\|u\right\|_{L_{xt}^{\infty}}
\leq C\left\|u\right\|_{X_{s,b}}.\label{2.030}
\end{eqnarray}
\end{Lemma}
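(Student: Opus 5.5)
The plan is to show pointwise domination of $|u(x,t)|$ by an $L^{1}_{\xi}$ norm of its spatial Fourier transform. The $X_{s,b}$ norm then controls that $L^{1}_{\xi}$ norm through two Cauchy--Schwarz inequalities, one in $\tau$ and one in $\xi$. We first take $u\in\mathscr{S}(\R^{2})$; by density of Schwartz functions in $X_{s,b}$ the estimate then extends to all of $X_{s,b}$.

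The first step writes $u$ by Fourier inversion as
\begin{eqnarray*}
u(x,t)=\frac{1}{2\pi}\int_{\SR^{2}}e^{ix\xi+it\tau}\mathscr{F}u(\xi,\tau)\,d\xi d\tau .
\end{eqnarray*}
Taking absolute values gives
\begin{eqnarray*}
|u(x,t)|\leq C\int_{\SR^{2}}|\mathscr{F}u(\xi,\tau)|\,d\xi d\tau ,
\end{eqnarray*}
and this bound is uniform in $(x,t)$. Next, insert the weight $\langle\xi\rangle^{s}\langle\tau-\phi(\xi)\rangle^{b}$ and its reciprocal, and apply Cauchy--Schwarz in $(\xi,\tau)$. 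This yields
\begin{eqnarray*}
\|u\|_{L_{xt}^{\infty}}\leq C\left[\int_{\SR^{2}}\langle\xi\rangle^{-2s}\langle\tau-\phi(\xi)\rangle^{-2b}\,d\xi d\tau\right]^{\frac{1}{2}}\|u\|_{X_{s,b}} .
\end{eqnarray*}
For each fixed $\xi\neq0$, the change of variables $\tau\mapsto\tau-\phi(\xi)$ shows that the $\tau$-integral equals $\int_{\SR}\langle\tau\rangle^{-2b}d\tau$. This is finite because $b>\frac12$, and it does not depend on $\xi$. The remaining integral $\int_{\SR}\langle\xi\rangle^{-2s}d\xi$ is finite because $s>\frac12$. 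This proves (\ref{2.030}) for Schwartz functions.

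The one delicate point is the singularity of $\phi(\xi)=\xi^{3}-\xi^{-1}$ at $\xi=0$. It does no harm: the set $\{\xi=0\}$ has measure zero, and the translation in $\tau$ is performed for each fixed $\xi\neq0$, so Fubini/Tonelli applies to the nonnegative integrand. For the density step, take Schwartz $u_{n}\to u$ in $X_{s,b}$. The estimate shows that $(u_{n})$ is Cauchy in $L_{xt}^{\infty}$, so it converges uniformly to some bounded function. That function must coincide with $u$ as a distribution, since convergence in $X_{s,b}$ implies convergence in $\mathscr{S}^{\prime}$. Alternatively, one can skip density entirely: the Cauchy--Schwarz argument shows directly that $\mathscr{F}u\in L^{1}(\R^{2})$, so $u$ equals a bounded continuous function a.e. and the same bound holds.
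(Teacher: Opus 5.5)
Your proof is correct and is essentially the paper's argument. The paper writes $u=C\int e^{it\lambda}U(t)G_\lambda\,d\lambda$ with $\lambda=\tau-\phi(\xi)$ and $\mathscr{F}_xG_\lambda(\xi)=\mathscr{F}u(\xi,\phi(\xi)+\lambda)$, bounds each free evolution by $\|\mathscr{F}_xG_\lambda\|_{L^1_\xi}\leq C\|G_\lambda\|_{H^s}$, and then applies Cauchy--Schwarz in $\lambda$; this is your joint Cauchy--Schwarz carried out iteratively.

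One small caveat concerns the density step. Near $\xi=0$ one has $\langle\tau-\phi(\xi)\rangle^{2b}\sim|\xi|^{-2b}$, which is not integrable for $b>\frac12$, so a generic Schwartz function does not belong to $X_{s,b}$. Your alternative of showing directly that $\mathscr{F}u\in L^1(\R^2)$ is therefore the cleaner way to finish. If you keep the density route, approximate by Schwartz functions whose Fourier transform vanishes near $\xi=0$.
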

\noindent{\bf Proof.}By using a direct computation, for $s>\frac{1}{2},$ we have that
\begin{eqnarray}
&&\|U(t)f\|_{L_{xt}^{\infty}}\leq C\left\|\int_{\SR}
e^{ix\xi+it(\xi^{3}-\frac{1}{\xi})}\mathscr {F}_{x} fd\xi\right\|_{L_{xt}^{\infty}}\nonumber\\
&&\leq C\int_{\SR}\left|\mathscr {F}_{x} f\right|d\xi\leq C\|f\|_{H^{s}}.\label{2.031}
\end{eqnarray}
Obviously,
\begin{eqnarray}
u(x,t)=C\int_{\SR^{2}}e^{ix\xi+it\tau}\mathscr{F}u(\xi,\tau)d\xi d\tau=
C\int_{\SR^{2}}e^{ix\xi+it\phi(\xi)+it(\tau-\phi(\xi))}\mathscr{F}u(\xi,\tau)d\xi d\tau,\label{2.032}
\end{eqnarray}
let  $\tau-\phi(\xi)=\lambda$, from (\ref{2.032}), we have
\begin{eqnarray}
&&u(x,t)=
C\int_{\SR^{2}}e^{ix\xi+it\phi(\xi)+it\lambda}\mathscr{F}u(\xi,\phi+\lambda)d\xi d\lambda\nonumber\\
&&=C\int_{\SR}e^{it\lambda}U(t)Gd\lambda,\label{2.033}
\end{eqnarray}
where $\mathscr{F}_{x}G=\mathscr{F}u(\xi,\phi+\lambda).$ It follows from \eqref{2.033} that
\begin{eqnarray}
&&\left\|u\right\|_{L_{xt}^{\infty}}\leq C\int_{\SR}\|U(t)G\|_{L_{xt}^{\infty}}d\lambda\leq C\int_{\SR}\|G\|_{H^{s}}d\lambda\nonumber\\
&&\leq C\left[\int_{\SR}\langle \lambda\rangle^{2b}\|G\|_{H^{s}}^{2}d\lambda\right]^{1/2}\left[\int_{\SR}\langle \lambda\rangle^{-2b}d\lambda\right]^{1/2}\nonumber\\
&&\leq C\left[\int_{\SR}\langle \lambda\rangle^{2b}\|G\|_{H^{s}}^{2}d\lambda\right]^{1/2}.\label{2.034}
\end{eqnarray}
Let  $\tau-\phi(\xi)=\lambda$,  from (\ref{2.034}), we have
\begin{eqnarray}
&&\left\|u\right\|_{L_{xt}^{\infty}}\leq C\|u\|_{X_{s,b}}.\label{2.035}
\end{eqnarray}

This completes the proof of Lemma 2.8.

\bigskip

\section{ Proof of Theorem 1.1}
\setcounter{equation}{0}
\setcounter{Theorem}{0}

\setcounter{Lemma}{0}

\setcounter{section}{3}

This section is devoted to proving Theorem 1.1.

\begin{Lemma}\label{Lemma3.1}(The density Theorem in the mixed Lebesgue spaces)
Suppose that  $0< p,q<\infty$ and
\begin{eqnarray}
\left\|f\right\|_{L_{t\in I}^{q}L_{x}^{p}}:=\left[\int_{I}
\left(\int_{\SR^{d}}|f|^{p}dx\right)^{\frac{q}{p}}dt\right]^{1/q}<\infty, I\subset \R.\label{3.01}
\end{eqnarray}
Then, the functions of the form
\begin{eqnarray*}
&&\sum\limits_{i=1}^{r}a_{i}(x)\chi_{E_{i}}(t)
\end{eqnarray*}
are dense in $L_{t\in I}^{q}L_{x}^{p}$. Here
$a_{i}(x)\in L^{p}$ and $E_{i}$ are pairwise disjoint measurable sets in $I$ with finite measures
and $r\geq1$ is an integer.
\end{Lemma}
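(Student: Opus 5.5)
The plan is to identify $L^{q}_{t\in I}L^{p}_{x}$ with the Bochner-type space $L^{q}(I;L^{p}(\R^{d}))$ and show that $L^{p}$-valued simple functions are dense in it. The argument goes by truncation and then approximation of strongly measurable functions by countably-valued ones. Throughout, I work with the quasi-norm when $p<1$ or $q<1$. In that range the triangle inequality only holds with a constant, or one can use $\|f+g\|^{\min(p,q,1)}\le\|f\|^{\min(p,q,1)}+\|g\|^{\min(p,q,1)}$. That is enough for density arguments.

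\textbf{Step 1 (reduction to bounded, compactly supported data).} Given $f$ with finite norm (\ref{3.01}), set
\begin{eqnarray*}
f_{n}(x,t)=f(x,t)\chi_{\{|x|\le n,\ |f(x,t)|\le n\}}\chi_{I\cap[-n,n]}(t).
\end{eqnarray*}
For a.e.\ $t$, $\|f(\cdot,t)\|_{L^{p}}<\infty$, so dominated convergence in $x$ gives $\|f(\cdot,t)-f_{n}(\cdot,t)\|_{L^{p}}\to0$. This quantity is dominated by $\|f(\cdot,t)\|_{L^{p}}\in L^{q}(I)$, so a second application of dominated convergence in $t$ gives $f_{n}\to f$ in $L^{q}_{t}L^{p}_{x}$. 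It therefore suffices to approximate a bounded function $g$ supported in $B_{n}\times I_{n}$, where $I_{n}$ has finite measure.

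\textbf{Step 2 (approximation of bounded compactly supported functions).} For such $g$, with $r=\max(p,q)$, Hölder on the finite-measure set $B_{n}\times I_{n}$ gives $\|h\|_{L^{q}_{t}L^{p}_{x}}\le C_{n}\|h\|_{L^{r}_{x,t}}$ for $h$ supported there. Since $g\in L^{r}(\R^{d}\times I)$ with $r<\infty$, functions of the form $\sum_{j}c_{j}\chi_{A_{j}\times E_{j}}$ (finite linear combinations of indicators of measurable rectangles) are dense in $L^{r}$ of the product. This follows from the construction of product measure and the density of simple functions, since every finite-measure measurable set is approximated in measure by finite unions of rectangles. I choose the rectangles inside $B_{n}\times I_{n}$.

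\textbf{Step 3 (rewriting in the required form).} A finite combination $\sum_{j}c_{j}\chi_{A_{j}}(x)\chi_{E_{j}}(t)$ is rewritten by refining $\{E_{j}\}$ into the finitely many atoms $E_{i}'$ of the generated algebra, which are pairwise disjoint and of finite measure. On each atom the function equals $a_{i}(x)=\sum_{j:E_{i}'\subset E_{j}}c_{j}\chi_{A_{j}}(x)\in L^{p}$. This gives exactly $\sum_{i}a_{i}(x)\chi_{E_{i}'}(t)$.

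I expect the main obstacle to be Step 2, specifically justifying rectangle approximation in the product space and keeping track of measurability of $t\mapsto\|f(\cdot,t)\|_{L^{p}}$ (Tonelli). In the quasi-normed range, the Hölder comparison still works because $r\ge p,q$. I would cite the standard Benedek--Panzone theory of mixed-norm spaces for these facts.
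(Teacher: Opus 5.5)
Your proposal is correct, and it takes a different route from the paper. The paper gives no argument of its own for this lemma. It only refers to Proposition 5.5.6 of its reference [G2009], which is presumably a density statement for simple functions in a Banach-space-valued $L^{q}(I;B)$ with $B=L^{p}$.

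You instead prove the statement directly, in four moves:
\begin{enumerate}
\item Truncate to $\{|x|\le n,\ |f|\le n\}\times(I\cap[-n,n])$ and pass to the limit with dominated convergence twice, first in $x$ and then in $t$.
\item Dominate the mixed (quasi-)norm on the bounded box by the $L^{r}_{x,t}$ norm, $r=\max(p,q)$, using H\"older with exponents $r/p,\,r/q\ge 1$.
\item Approximate in $L^{r}$ of the product by finite combinations of indicators of rectangles.
\item Regroup over the atoms generated by the $t$-sides.
\end{enumerate}

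Each route buys something different. The citation is shorter, but it silently needs the identification of jointly measurable functions of finite mixed norm with strongly measurable $L^{p}$-valued functions. It also lives in a Banach-space framework, so it does not apply verbatim when the inner exponent satisfies $p<1$, even though the lemma is stated for all $0<p,q<\infty$.

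Your argument covers the whole quasi-normed range. Your inequality $\|f+g\|^{\rho}\le\|f\|^{\rho}+\|g\|^{\rho}$ with $\rho=\min(p,q,1)$ is correct, and it is all you need to combine the errors from Steps 1 and 2. Your argument also gives more structure than the statement asks for: the $E_{i}$ can be taken bounded, and the $a_{i}$ simple with bounded support.

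Your opening framing is not needed. The identification with a Bochner space, the approximation of strongly measurable functions by countably-valued ones, and the appeal to Benedek--Panzone are never used in Steps 1--3, so you can drop them. The only measurability fact you use is Tonelli, for the measurability of $t\mapsto\int|f(x,t)|^{p}\,dx$.
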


For the proof of Lemma 3.1, see Proposition 5.5.6 of \cite{G2009}.

\begin{Lemma}\label{Lemma3.2}(The triangle inequality in the mixed Lebesgue spaces)
Suppose that $1\leq  p,q<\infty$
and  $f,g\in L_{t\in I}^{q}L_{x}^{p}(\R^{d})$. Then, we have
\begin{eqnarray}
\left\|f+g\right\|_{L_{t\in I}^{q}L_{x}^{p}}\leq \left\|f\right\|_{L_{t\in I}^{q}L_{x}^{p}}
+\left\|g\right\|_{L_{t\in I}^{q}L_{x}^{p}}.\label{3.02}
\end{eqnarray}

\end{Lemma}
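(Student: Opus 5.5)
The strategy is to show that the mixed norm is a genuine (semi)norm: prove Minkowski's inequality in $x$ for each fixed $t$, and then Minkowski's inequality in $t$ for the resulting scalar functions. No density argument (Lemma \ref{Lemma3.1}) is needed, because $p,q\geq1$ and both norms are convex.

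First, the expression is well defined. Since $f,g\in L_{t\in I}^{q}L_{x}^{p}$, Tonelli's theorem shows that $F(t):=\|f(\cdot,t)\|_{L_{x}^{p}}$ and $G(t):=\|g(\cdot,t)\|_{L_{x}^{p}}$ are measurable functions of $t$, with values in $[0,\infty]$. Each is finite for a.e. $t\in I$, because $\|F\|_{L^{q}(I)}<\infty$. The same applies to $H(t):=\|f(\cdot,t)+g(\cdot,t)\|_{L_{x}^{p}}$, since $f+g$ is jointly measurable.

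The first key step is pointwise in $t$. For a.e. $t\in I$, the usual Minkowski inequality in $L^{p}(\R^{d})$, valid for $1\leq p<\infty$, gives
\begin{eqnarray*}
H(t)\leq F(t)+G(t).
\end{eqnarray*}

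The second key step is in $t$. The function $s\mapsto s^{q}$ is increasing on $[0,\infty)$, so the pointwise bound integrates to
\begin{eqnarray*}
\left(\int_{I}H(t)^{q}dt\right)^{1/q}\leq \left(\int_{I}(F(t)+G(t))^{q}dt\right)^{1/q}.
\end{eqnarray*}
Minkowski's inequality in $L^{q}(I)$, valid for $1\leq q<\infty$, bounds the right-hand side by $\|F\|_{L^{q}(I)}+\|G\|_{L^{q}(I)}$. This is exactly \eqref{3.02}.

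The only delicate point is measurability of $t\mapsto\|f(\cdot,t)\|_{L_{x}^{p}}$, which is needed for the outer integral to make sense. Tonelli's theorem applied to $|f|^{p}$ settles it, and everything else is the standard Minkowski inequality used twice.
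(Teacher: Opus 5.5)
Your proof is correct. The paper gives no argument of its own for this lemma; it only cites Lemma A.1 of \cite{Frank}, so your proof is a self-contained version of the standard fact being quoted.

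Your two applications of Minkowski's inequality are the standard argument:
\begin{itemize}
\item first in $L^{p}_{x}$ for a.e.\ fixed $t$;
\item then in $L^{q}(I)$, after using that $s\mapsto s^{q}$ is monotone.
\end{itemize}
You correctly identify the one point that needs care: the measurability of $t\mapsto\|f(\cdot,t)\|_{L^{p}_{x}}$, which you get from Tonelli applied to $|f|^{p}$. You are also right that the density result of Lemma \ref{Lemma3.1} plays no role here.

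Citing the result buys the paper brevity. Your version makes explicit that only convexity of the two norms ($p,q\geq 1$) and joint measurability are used.
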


For the proof of Lemma 3.2, we refer the reader to Lemma A.1  of \cite{Frank}.

\begin{Lemma}\label{Lemma3.3}(The convergence  in the mixed Lebesgue spaces)
Suppose that $\left\|\langle\xi\rangle^{s}\mathscr{F}v(\xi,\tau)\right\|_{L_{\xi}^{2}L_{\tau}^{1}}<\infty$.
 Then,    for  $\epsilon>0,$ there exist  $\delta>0(<\epsilon^{4}\leq 1),M>0$ and $C>0$ such that
\begin{eqnarray}
&&\left\|\chi_{\{\xi:|\xi|\leq\delta\}}(\xi)\langle\xi\rangle^{s}\mathscr{F}v(\xi,\tau)\right\|_{L_{\xi}^{2}L_{\tau}^{1}}\leq C\epsilon,\label{3.03}\\
&&\left\|\chi_{\{\xi:|\xi|\geq M\}}(\xi)\langle\xi\rangle^{s}\mathscr{F}v(\xi,\tau)\right\|_{L_{\xi}^{2}L_{\tau}^{1}}\leq C\epsilon\label{3.04}.
\end{eqnarray}
\end{Lemma}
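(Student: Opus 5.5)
The plan is to reduce Lemma \ref{Lemma3.3} to the dominated (or monotone) convergence theorem applied to the single function of $\xi$ obtained after taking the inner $L_{\tau}^{1}$ norm. Set
\begin{eqnarray*}
g(\xi):=\langle\xi\rangle^{s}\int_{\SR}|\mathscr{F}v(\xi,\tau)|d\tau .
\end{eqnarray*}
By hypothesis $g\in L_{\xi}^{2}(\R)$, so $|g|^{2}\in L^{1}(\R)$ and $\|g\|_{L^{2}_{\xi}}=\left\|\langle\xi\rangle^{s}\mathscr{F}v\right\|_{L_{\xi}^{2}L_{\tau}^{1}}$. Since the cut-off factors $\chi_{\{|\xi|\leq\delta\}}$ and $\chi_{\{|\xi|\geq M\}}$ depend only on $\xi$, they pass through the inner $L^{1}_{\tau}$ norm. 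Hence the two left-hand sides of (\ref{3.03}) and (\ref{3.04}) are exactly $\left(\int_{|\xi|\leq\delta}|g|^{2}d\xi\right)^{1/2}$ and $\left(\int_{|\xi|\geq M}|g|^{2}d\xi\right)^{1/2}$.

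For (\ref{3.04}) I would use that $\chi_{\{|\xi|\geq M\}}|g|^{2}\to0$ pointwise as $M\to\infty$ and is dominated by $|g|^{2}\in L^{1}$. Dominated convergence then gives $\int_{|\xi|\geq M}|g|^{2}d\xi\to 0$, so some $M$ makes this at most $\epsilon^{2}$. For (\ref{3.03}), the sets $\{|\xi|\leq\delta\}$ shrink to $\{0\}$, which has Lebesgue measure zero. So $\chi_{\{|\xi|\leq\delta\}}|g|^{2}\to0$ almost everywhere as $\delta\to0$, with the same dominating function, and $\int_{|\xi|\leq\delta}|g|^{2}\to0$. Equivalently, one can invoke absolute continuity of the integral of $|g|^{2}$, since $|\{|\xi|\leq\delta\}|=2\delta\to0$. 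This yields some $\delta_{0}$ with the integral at most $\epsilon^{2}$. Because the integral is monotone in $\delta$, any smaller $\delta$ also works, so we may shrink $\delta$ to satisfy the side condition $\delta<\epsilon^{4}\leq1$ (taking $\epsilon\leq1$ without loss of generality). Both estimates then hold with $C=1$.

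The only point needing care is the first step: justifying that the mixed norm $L_{\xi}^{2}L_{\tau}^{1}$ factors as an $L^{2}$ norm of the measurable function $g$. This is where the ordering of the norms matters: $L^{1}$ in $\tau$ is taken first, then $L^{2}$ in $\xi$, consistent with the notation $\|f\|_{L^{p}_{x}L^{q}_{t}}$ fixed in the introduction. Measurability of $g$ follows from Tonelli's theorem applied to $|\mathscr{F}v|$. No finer property of $\mathscr{F}v$, and in particular nothing about the singular point of the phase $\phi$ at $\xi=0$, is needed.
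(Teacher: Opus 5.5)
Your proof is correct, but it takes a different route from the paper's.

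\textbf{The paper's route.} The paper invokes Lemma \ref{Lemma3.1}, the density of functions $\sum_{i=1}^{r}a_{i}(\tau)\chi_{E_{i}}(\xi)$ in $L_{\xi}^{2}L_{\tau}^{1}$, to approximate $\langle\xi\rangle^{s}\mathscr{F}v$ within $\epsilon$. It then applies the mixed-norm triangle inequality:
\begin{itemize}
\item the approximation error contributes $\epsilon$;
\item each truncated piece contributes $\|a_{i}\|_{L^{1}_{\tau}}\,|E_{i}\cap\{|\xi|\leq\delta\}|^{1/2}\lesssim\delta^{1/2}$, or $\|a_{i}\|_{L^{1}_{\tau}}\,|E_{i}\cap\{|\xi|\geq M\}|^{1/2}\to0$ as $M\to\infty$.
\end{itemize}

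\textbf{Your route.} You note that the cutoffs act only on the outer variable $\xi$, so they pass through the inner $L^{1}_{\tau}$ norm. This reduces both estimates to small-ball and tail integrals of the single function $|g|^{2}\in L^{1}(\R)$. Dominated convergence, or absolute continuity of the integral, then finishes the proof.

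\textbf{Comparison.}
\begin{itemize}
\item Your argument is shorter and needs neither Lemma \ref{Lemma3.1} nor Lemma \ref{Lemma3.2}.
\item Your constant is $C=1$, independent of $\epsilon$ and $v$.
\item The paper's last step, $\epsilon+Cr\delta^{1/2}\leq\epsilon+Cr\epsilon$, hides a factor depending on $r$ and $\max_{i}\|a_{i}\|_{L^{1}_{\tau}}$. Both depend on $\epsilon$ through the choice of approximant.
\item To get a genuinely uniform constant on the paper's route, $\delta$ must be chosen after the approximant, for instance $\delta\leq\epsilon^{2}\bigl(r\max_{i}\|a_{i}\|_{L^{1}_{\tau}}\bigr)^{-2}$. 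Your reduction avoids exactly this bookkeeping.
\item The uniformity matters: the statement lets $C$ be chosen after $\epsilon$, which would make the lemma trivial. Your $C=1$ removes that ambiguity.
\end{itemize}

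In effect, the paper reproves absolute continuity of the integral by hand in the mixed-norm setting. You observe that this is unnecessary here because only the outer variable is truncated.
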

\noindent{\bf Proof.} Since $\left\|\langle\xi\rangle^{s}\mathscr{F}v(\xi,\tau)\right\|_{L_{\xi}^{2}L_{\tau}^{1}}<\infty$, from Lemma \ref{Lemma3.1},
we have that for $\forall \epsilon>0$ there exists
\begin{eqnarray*}
&&\sum\limits_{i=1}^{r}a_{i}(\tau)\chi_{E_{i}}(\xi)
\end{eqnarray*}
such that
\begin{eqnarray}
\left\|\sum\limits_{i=1}^{r}a_{i}(\tau)\chi_{E_{i}}(\xi)-\langle\xi\rangle^{s}\mathscr{F}v\right\|_{L_{\xi}^{2}L_{\tau}^{1}}\leq \epsilon.\label{3.05}
\end{eqnarray}
Here
$a_{i}(\tau)\in L^{1}$ and $E_{i}$ are pairwise disjoint measurable sets in $\R$ with finite measures
and $r\geq1$ is an integer.
Thus, by using Lemma \ref{Lemma3.1}, we have
\begin{eqnarray}
&&\left\|\chi_{\{\xi:|\xi|\leq\delta\}}(\xi)\langle\xi\rangle^{s}\mathscr{F}v\right\|_{L_{\xi}^{2}L_{\tau}^{1}}\nonumber\\&&\leq
\left\|\chi_{\{\xi:|\xi|\leq\delta\}}(\xi)\left(\langle\xi\rangle^{s}
\mathscr{F}v-\sum\limits_{i=1}^{r}a_{i}(\tau)\chi_{E_{i}}(\xi)\right)\right\|_{L_{\xi}^{2}L_{\tau}^{1}}+
\left\|\chi_{\{\xi:|\xi|\leq\delta\}}(\xi)\sum\limits_{i=1}^{r}a_{i}(\tau)\chi_{E_{i}}(\xi)\right\|_{L_{\xi}^{2}L_{\tau}^{1}}\nonumber\\
&&\leq \left\|\left(\langle\xi\rangle^{s}
\mathscr{F}v-\sum\limits_{i=1}^{r}a_{i}(\tau)\chi_{E_{i}}(\xi)\right)\right\|_{L_{\xi}^{2}L_{\tau}^{1}}+
\left\|\chi_{\{\xi:|\xi|\leq\delta\}}(\xi)\sum\limits_{i=1}^{r}a_{i}(\tau)\chi_{E_{i}}(\xi)\right\|_{L_{\xi}^{2}L_{\tau}^{1}}\nonumber\\
&&\leq \epsilon+\sum\limits_{i=1}^{r}\left\|\chi_{\{\xi:|\xi|\leq\delta\}}(\xi)a_{i}(\tau)\chi_{E_{i}}(\xi)\right\|_{L_{\xi}^{2}L_{\tau}^{1}}\nonumber\\
&&\leq \epsilon+Cr\delta^{1/2}\leq \epsilon+Cr\epsilon\label{3.06}
\end{eqnarray}
and
\begin{eqnarray}
&&\left\|\chi_{\{\xi:|\xi|\geq M\}}(\xi)\langle\xi\rangle^{s}\mathscr{F}v\right\|_{L_{\xi}^{2}L_{\tau}^{1}}\nonumber\\&&\leq
\left\|\chi_{\{\xi:|\xi|\geq M\}}(\xi)\left(\langle\xi\rangle^{s}
\mathscr{F}v-\sum\limits_{i=1}^{r}a_{i}(\tau)\chi_{E_{i}}(\xi)\right)\right\|_{L_{\xi}^{2}L_{\tau}^{1}}+
\left\|\chi_{\{\xi:|\xi|\geq M\}}(\xi)\sum\limits_{i=1}^{r}a_{i}(\tau)\chi_{E_{i}}(\xi)\right\|_{L_{\xi}^{2}L_{\tau}^{1}}\nonumber\\
&&\leq \left\|\left(\langle\xi\rangle^{s}
\mathscr{F}v-\sum\limits_{i=1}^{r}a_{i}(\tau)\chi_{E_{i}}(\xi)\right)\right\|_{L_{\xi}^{2}L_{\tau}^{1}}+
\left\|\chi_{\{\xi:|\xi|\geq M\}}(\xi)\sum\limits_{i=1}^{r}a_{i}(\tau)\chi_{E_{i}}(\xi)\right\|_{L_{\xi}^{2}L_{\tau}^{1}}\nonumber\\
&&\leq \epsilon+\sum\limits_{i=1}^{r}\left\|\chi_{\{\xi:|\xi|\geq M\}}(\xi)a_{i}(\tau)\chi_{E_{i}}(\xi)\right\|_{L_{\xi}^{2}L_{\tau}^{1}}\nonumber\\
&&\leq \epsilon+Cr{\rm mes}(E_{i}\cap \left\{\xi:|\xi|\geq M\right\})\leq C\epsilon\label{3.07}
\end{eqnarray}
for sufficiently large $M>0.$

This completes the proof of Lemma 3.3.

\begin{Lemma}\label{Lemma3.4}
Let  $b\in \R,s\in \R$ and
\begin{eqnarray}
\left[\int_{\SR}\int_{\SR}\langle \xi\rangle^{2s}
\langle \tau\rangle^{2b}|\mathscr{F}v(\xi,\tau)|^{2}d\xi d\tau\right]^{1/2}<\infty
.\label{3.08}
\end{eqnarray}
Then, for $\epsilon>0$, there exists $\delta >0(\leq\epsilon^{4}\leq1)$ such that
\begin{eqnarray}
\left[\int_{|\xi|<\delta}\int_{\SR}\langle \xi\rangle^{2s}
\langle \tau\rangle^{2b}|\mathscr{F}v(\xi,\tau)|^{2}d\tau d\xi \right]^{1/2}\leq C\epsilon
.\label{3.09}
\end{eqnarray}
\end{Lemma}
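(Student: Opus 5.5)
The statement is an absolute-continuity (dominated convergence) fact for the finite measure $d\mu=\langle\xi\rangle^{2s}\langle\tau\rangle^{2b}|\mathscr{F}v(\xi,\tau)|^{2}\,d\xi\,d\tau$ on $\R^{2}$. I will therefore proceed directly, without the density argument of Lemma \ref{Lemma3.3}. Set
\begin{eqnarray*}
g(\xi)=\int_{\SR}\langle \xi\rangle^{2s}\langle \tau\rangle^{2b}|\mathscr{F}v(\xi,\tau)|^{2}d\tau .
\end{eqnarray*}
By Tonelli's theorem and hypothesis (\ref{3.08}), $g$ is a nonnegative measurable function with $g\in L^{1}(\R)$, and its $L^{1}$ norm equals the square of the quantity in (\ref{3.08}). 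In particular $g(\xi)<\infty$ for almost every $\xi$.

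The quantity in (\ref{3.09}) is $\left(\int_{|\xi|<\delta}g(\xi)\,d\xi\right)^{1/2}$. As $\delta\downarrow 0$, the functions $\chi_{\{|\xi|<\delta\}}g$ converge pointwise to $\chi_{\{0\}}g$, which vanishes almost everywhere because $\{0\}$ has Lebesgue measure zero. They are dominated by $g\in L^{1}$, so the dominated convergence theorem gives $\int_{|\xi|<\delta}g\,d\xi\to 0$. Equivalently, this is absolute continuity of the integral $A\mapsto\int_{A}g$: given $\epsilon>0$ there is $\eta>0$ such that $|A|<\eta$ implies $\int_{A}g<\epsilon^{2}$. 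I then take $A=(-\delta,\delta)$ with $2\delta<\eta$. This yields (\ref{3.09}) with $C=1$.

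The requirement $\delta\leq\epsilon^{4}\leq 1$ costs nothing. I may assume $\epsilon\leq 1$, since for $\epsilon>1$ one can use the $\delta$ obtained for $\epsilon=1$. Moreover, $\int_{|\xi|<\delta}g$ is monotone in $\delta$, so shrinking $\delta$ to $\min\{\delta,\epsilon^{4},\eta/4\}$ preserves the bound.

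There is no real obstacle here. The only point requiring care is to apply Tonelli's theorem to the nonnegative integrand, which justifies reducing to the one-variable function $g$; all parameters $b,s\in\R$ are handled uniformly because only integrability of $g$ is used.
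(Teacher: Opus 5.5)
Your proof is correct, but it takes a different route from the paper's.

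\textbf{The paper's route.} It approximates $\langle\xi\rangle^{s}\langle\tau\rangle^{b}|\mathscr{F}v|$ in $L^{2}(\R^{2})$ to within $\epsilon$ by some $g\in C_{c}^{\infty}(\R^{2})$. It then combines the triangle inequality with the trivial bound $\|\chi_{\{|\xi|\leq\delta\}}g\|_{L^{2}}\leq C\delta^{1/2}$, mirroring the density argument of Lemma \ref{Lemma3.3}.

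\textbf{Your route.} You use Tonelli to reduce to the one-variable function $\xi\mapsto\int_{\SR}\langle\xi\rangle^{2s}\langle\tau\rangle^{2b}|\mathscr{F}v|^{2}\,d\tau$, which lies in $L^{1}(\R)$. You then invoke dominated convergence, equivalently absolute continuity of the integral.

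\textbf{What each buys.} Your argument is shorter, gives $C=1$, and makes plain that only integrability is used. It also avoids a point the paper's write-up glosses over. The constant in $C\delta^{1/2}$ depends on the approximant $g$ (through $\|g\|_{L^{\infty}}$ and the size of its support), hence on $\epsilon$ and $v$. So $\delta\leq\epsilon^{4}$ alone does not yield $\epsilon+C\delta^{1/2}\leq C\epsilon$ with a uniform $C$; one must also take $\delta\leq(\epsilon/C_{g})^{2}$, which is harmless but needed. The paper's approach keeps a uniform methodology with Lemma \ref{Lemma3.3}. In substance, though, both prove the same absolute-continuity fact. In both, $\delta$ must depend on $v$, so the constraint $\delta\leq\epsilon^{4}$ is purely cosmetic, exactly as you treat it.
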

\noindent{\bf Proof.}
Since
\begin{eqnarray}
\langle \xi\rangle^{s}
\langle \tau\rangle^{b}|\mathscr{F}v(\xi,\tau)|\in L^{2},\label{3.010}
\end{eqnarray}
by using the density Theorem in $L^{2}(\R^{2})$, for $\epsilon>0$, there exists $g \in C_{c}^{\infty}(\R^{2})$ such that
\begin{eqnarray}
\left\|\langle \xi\rangle^{s}
\langle \tau\rangle^{b}|\mathscr{F}v(\xi,\tau)|-g\right\|_{L^{2}}\leq \epsilon,\label{3.011}
\end{eqnarray}
by using (\ref{3.011}), for $\delta\leq \epsilon^{4},$ we have
\begin{eqnarray}
&&\left\|\chi_{\{\xi:|\xi|\leq \delta\}}(\xi)\langle \xi\rangle^{s}
\langle \tau\rangle^{b}|\mathscr{F}v(\xi,\tau)|\right\|_{L^{2}}\nonumber\\
&&\leq \left\|\chi_{\{\xi:|\xi|\leq \delta\}}(\xi)\left[\langle \xi\rangle^{s}
\langle \tau\rangle^{b}|\mathscr{F}v(\xi,\tau)|-g\right]\right\|_{L^{2}}
+\left\|\chi_{\{\xi:|\xi|\leq \delta\}}(\xi)g\right\|_{L^{2}}\nonumber\\
&&\leq \epsilon+C\delta^{1/2}\leq C\epsilon
.\label{3.012}
\end{eqnarray}

This completes the proof of Lemma 3.4.

\begin{Remark}
We prove Lemma 3.4  as an independent interest.
\end{Remark}

\begin{Lemma}\label{Lemma3.5}
Let  $b>\frac{1}{2}$, we have
\begin{eqnarray*}
&&\|v\|_{L_{t}^{\infty}H^{s}}\leq C\|J_{t}^{b}J_{x}^{s}v\|_{L_{xt}^{2}}.
\end{eqnarray*}
\end{Lemma}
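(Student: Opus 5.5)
Since the norm $\|J_{t}^{b}J_{x}^{s}v\|_{L_{xt}^{2}}$ involves the plain weight $\langle\tau\rangle^{b}$ rather than $\langle\sigma\rangle^{b}$, no dispersion enters. The statement reduces to the one-dimensional Sobolev embedding $H^{b}_{t}(\R)\hookrightarrow L^{\infty}_{t}(\R)$ for $b>\frac12$, applied frequency by frequency in $\xi$. It then suffices to interchange the supremum in $t$ with the $L^{2}_{\xi}$ integral.

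\textbf{Step 1 (Fourier representation).} Write $\mathscr{F}_{x}v(\xi,t)=\frac{1}{\sqrt{2\pi}}\int_{\SR}e^{it\tau}\mathscr{F}v(\xi,\tau)\,d\tau$. For each fixed $t$ and a.e.\ $\xi$, Cauchy--Schwarz in $\tau$ gives
\begin{eqnarray*}
|\mathscr{F}_{x}v(\xi,t)|\leq C\left[\int_{\SR}\langle\tau\rangle^{-2b}d\tau\right]^{1/2}\left[\int_{\SR}\langle\tau\rangle^{2b}|\mathscr{F}v(\xi,\tau)|^{2}d\tau\right]^{1/2}.
\end{eqnarray*}
The first factor is finite precisely because $b>\frac12$.

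\textbf{Step 2 (Plancherel in $x$).} Multiply by $\langle\xi\rangle^{s}$, square, and integrate in $\xi$. This yields
\begin{eqnarray*}
\|v(\cdot,t)\|_{H^{s}}^{2}=\int_{\SR}\langle\xi\rangle^{2s}|\mathscr{F}_{x}v(\xi,t)|^{2}d\xi\leq C\int_{\SR^{2}}\langle\xi\rangle^{2s}\langle\tau\rangle^{2b}|\mathscr{F}v(\xi,\tau)|^{2}d\xi d\tau=C\|J_{t}^{b}J_{x}^{s}v\|_{L_{xt}^{2}}^{2},
\end{eqnarray*}
where the last equality is Plancherel in $(x,t)$. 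The right-hand side is independent of $t$, so taking the supremum over $t$ gives the claim.

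\textbf{Step 3 (justification).} The only delicate point is making the pointwise-in-$t$ formula legitimate for a general tempered $v$ with finite right-hand side. The plan is to prove the estimate first for $v\in\mathscr{S}(\R^{2})$, where Steps 1--2 are literal. The general case then follows by density of $\mathscr{S}$ in the weighted $L^{2}$ space. Step 1 also shows $t\mapsto\mathscr{F}_{x}v(\xi,t)$ is defined as an absolutely convergent integral for a.e.\ $\xi$, so the bound passes to the limit. The same dominated-convergence argument gives continuity in $t$ with values in $H^{s}$, which is what Theorem \ref{Theorem1} requires. Combining with Lemma \ref{Lemma2.8} for $s>\frac12$, and with $\langle\tau\rangle$ replaced by $\langle\sigma\rangle$ via the substitution $\tau\mapsto\tau-\phi(\xi)$ used in \eqref{2.033}, this gives Theorem \ref{Theorem1}.
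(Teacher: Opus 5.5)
Your proposal is correct and is the same argument as the paper's. You apply Cauchy--Schwarz in $\tau$, using $\int_{\SR}\langle\tau\rangle^{-2b}d\tau<\infty$ for $b>\frac12$, then integrate against $\langle\xi\rangle^{2s}$ in $\xi$ and use Plancherel. Your Step 3 adds rigor the paper omits, though the density argument is dispensable because the $\tau$-integral already converges absolutely for a.e.\ $\xi$, and your dominated-convergence remark gives continuity in $t$ far more directly than the paper's proof of Lemma \ref{Lemma3.6}.
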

\noindent{\bf Proof.} 
Since $b>\frac{1}{2}$, by using the Cauchy-Schwarz inequality with respect to $\tau$, we have
\begin{eqnarray*}
&&\|v\|_{H^{s}}=\left(\int_{\SR}\langle\xi\rangle^{2s}\left|\int_{\SR}e^{it\tau}\mathscr{F}v(\xi,\tau)d\tau\right|^{2}d\xi
\right)^{\frac{1}{2}}\\
&&\leq \left(\int_{\SR}\langle\xi\rangle^{2s}\int_{\SR}\langle\tau\rangle^{2b}|\mathscr{F}v(\xi,\tau)|^{2}d\tau d\xi\right)^{\frac{1}{2}}
\left(\int_{\SR}\langle\tau\rangle^{-2b}d\tau\right)^{\frac{1}{2}}\\
&&\leq C\left(\int_{\SR}\langle\xi\rangle^{2s}\int_{\SR}\langle\tau\rangle^{2b}|\mathscr{F}v(\xi,\tau)|^{2}d\tau d\xi\right)^{\frac{1}{2}}\\
&&=C\|J_{t}^{b}J_{x}^{s}v\|_{L_{xt}^{2}}.
\end{eqnarray*}

This completes the proof of Lemma 3.5.

\begin{Lemma}\label{Lemma3.6}
Let  $b>\frac{1}{2}$, we have
\begin{eqnarray}
&& X_{s,b}(\R^{2})\hookrightarrow C(\R;H_{x}^{s}).\label{3.013}
\end{eqnarray}
\end{Lemma}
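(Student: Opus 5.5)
The plan is to write $u\in X_{s,b}$ as a superposition of modulated free solutions, exactly as in the proof of Lemma \ref{Lemma2.8}. With $\lambda=\tau-\phi(\xi)$ and $\mathscr{F}_{x}G_{\lambda}(\xi)=\mathscr{F}u(\xi,\phi(\xi)+\lambda)$, formula \eqref{2.033} gives
\begin{eqnarray*}
u(\cdot,t)=C\int_{\SR}e^{it\lambda}U(t)G_{\lambda}\,d\lambda .
\end{eqnarray*}
Since $U(t)$ is unitary on $H^{s}(\R)$, each integrand $t\mapsto e^{it\lambda}U(t)G_{\lambda}$ has $H^{s}$ norm $\|G_{\lambda}\|_{H^{s}}$ for every $t$. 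By Cauchy--Schwarz in $\lambda$ with weight $\langle\lambda\rangle^{b}$, as in \eqref{2.034},
\begin{eqnarray*}
\int_{\SR}\|G_{\lambda}\|_{H^{s}}\,d\lambda\leq C\|u\|_{X_{s,b}} ,
\end{eqnarray*}
because $b>\frac{1}{2}$. The Bochner integral therefore converges in $H^{s}$ uniformly in $t$, which gives $\sup_{t}\|u(t)\|_{H^{s}}\leq C\|u\|_{X_{s,b}}$. This is the $X_{s,b}$ analogue of Lemma \ref{Lemma3.5}, with the modulation $\langle\sigma\rangle$ in place of $\langle\tau\rangle$.

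Next I would prove continuity in $t$. For fixed $\lambda$, the map $t\mapsto e^{it\lambda}U(t)G_{\lambda}$ is continuous into $H^{s}$. Indeed,
\begin{eqnarray*}
\|U(t)G-U(t_{0})G\|_{H^{s}}^{2}=\int_{\SR}\langle\xi\rangle^{2s}\left|e^{i(t-t_{0})\phi(\xi)}-1\right|^{2}|\mathscr{F}_{x}G(\xi)|^{2}d\xi .
\end{eqnarray*}
The integrand tends to $0$ pointwise for $\xi\neq0$ and is dominated by $4\langle\xi\rangle^{2s}|\mathscr{F}_{x}G|^{2}\in L^{1}$, so dominated convergence applies. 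The singularity of $\phi$ at $\xi=0$ is harmless here because the phase is unimodular and $\{0\}$ is a null set; this is also where Lemma \ref{Lemma3.4} could be used instead, cutting out $|\xi|<\delta$ at cost $C\epsilon$.

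Then, since $\|e^{it\lambda}U(t)G_{\lambda}-e^{it_{0}\lambda}U(t_{0})G_{\lambda}\|_{H^{s}}\leq 2\|G_{\lambda}\|_{H^{s}}$ and $\|G_{\lambda}\|_{H^{s}}\in L^{1}_{\lambda}$, a second application of dominated convergence, now in $\lambda$, gives $\|u(t)-u(t_{0})\|_{H^{s}}\to0$ as $t\to t_{0}$. This proves \eqref{3.013}.

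The main obstacle is justifying the representation and the Minkowski step rigorously for a general tempered $u\in X_{s,b}$, that is, showing that $\lambda\mapsto G_{\lambda}$ is strongly measurable into $H^{s}$ and that the $H^{s}$-valued integral agrees with $u(t)$ for a.e.\ $t$. I would handle this first for $u$ with $\mathscr{F}u\in C_{c}^{\infty}$, where everything is classical, and then extend by density. The uniform bound shows that $X_{s,b}$-Cauchy sequences are Cauchy in $L^{\infty}_{t}H^{s}$, and a uniform limit of continuous $H^{s}$-valued functions is continuous.
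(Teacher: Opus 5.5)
Your proof is correct. It rests on the same representation as the paper's: with $v(t)=U(-t)u(t)$ one has $v(t)=C\int e^{it\lambda}G_{\lambda}\,d\lambda$, so your formula is just $u(t)=U(t)v(t)$. The way you get continuity is different, though.

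The paper quotes Tao's Corollary~2.10 for $\sup_t\|u(t)\|_{H^s}\le C\|u\|_{X_{s,b}}$. It then splits $\|u(t_1)-u(t_2)\|_{H^s}\le\|(U(t_1)-U(t_2))v(t_1)\|_{H^s}+\|v(t_1)-v(t_2)\|_{H^s}$ and estimates each term with explicit cutoffs:
\begin{itemize}
\item the frequency line is cut into $|\xi|\le\epsilon^4$, $\epsilon^4\le|\xi|\le 1$, $1\le|\xi|\le M$ and $|\xi|\ge M$;
\item the two extreme pieces are made small by the mixed-norm density result Lemma~\ref{Lemma3.3} in $L^2_\xi L^1_\tau$;
\item the middle pieces are controlled by $|e^{it_1\phi(\xi)}-e^{it_2\phi(\xi)}|\le|t_1-t_2|\,|\phi(\xi)|$;
\item the modulation variable is cut at $|\tau|=M$.
\end{itemize}
This gives an explicit $\delta<\min\{\epsilon/M^3,\epsilon^5\}$.

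You instead derive the uniform bound yourself, by Minkowski plus Cauchy--Schwarz in $\lambda$. You then get continuity from two applications of dominated convergence: in $\xi$, for the strong continuity of the unitary group on each profile $G_\lambda$, and then in $\lambda$, for the superposition.

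Your route has several advantages:
\begin{itemize}
\item it is shorter and self-contained;
\item it avoids Lemma~\ref{Lemma3.3} and the $\epsilon$--$M$--$\delta$ bookkeeping, in particular the low-frequency cutoff, which in the paper's scheme really has to depend on $u$ rather than being fixed as $\epsilon^4$;
\item it makes transparent that the singularity of $\phi$ at $\xi=0$ plays no role;
\item it proves $\|u(t)-u(t_0)\|_{H^s}\to 0$ directly. The paper states its goal as continuity of $t\mapsto\|u(t)\|_{H^s}$, which is not enough by itself, although its bound on $I_1+I_2$ does in fact control $\|u(t_1)-u(t_2)\|_{H^s}$.
\end{itemize}
In exchange, the paper's approach yields a quantitative modulus of continuity in terms of the tail parameters of $\mathscr{F}u$.

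One small remark: your density step is unnecessary. For every $u\in X_{s,b}$, Cauchy--Schwarz gives $\mathscr{F}u(\xi,\cdot)\in L^1_\tau$ for a.e.\ $\xi$, and this already justifies the representation and the Bochner integral. If you do use density, take $\mathscr{F}u\in C_c^\infty$ supported away from $\{\xi=0\}$. Near $\xi=0$ one has $\langle\tau-\phi(\xi)\rangle^{2b}\sim|\xi|^{-2b}$, which is not integrable for $b>\frac{1}{2}$, so general $C_c^\infty$ data are not in $X_{s,b}$.
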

\noindent{\bf Proof.} When $u\in X_{s,b}(\R^{2})$,    by using the fact that
\begin{eqnarray}
\|u\|_{X_{s,b}}=\|J_{t}^{b}J_{x}^{s}U(-t)u\|_{L_{xt}^{2}},\label{3.014}
\end{eqnarray}
In Lemma 2.9 and   Corollary 2.10 of \cite{Tao},  Tao  has prove that
\begin{eqnarray}
\sup\limits_{t\in \SR}\|u\|_{H^{s}}\leq C\|J_{t}^{b}J_{x}^{s}U(-t)u\|_{L_{xt}^{2}}
=\|u\|_{X_{s,b}}.
\label{3.015}
\end{eqnarray}
Consequently, it suffices to prove the continuity of $\|u\|_{H^{s}}$ with respect to $t$.
  More specifically,
it remains to show that for  $\forall\epsilon>0$, there exists $\delta>0$, such that when $|t_{1}-t_{2}|<\delta$, we have
\begin{eqnarray}
&&\Big|\left\|u(t_{1})\right\|_{H^{2}}-\left\|u(t_{2})\right\|_{H^{s}}\Big|<\epsilon.\label{3.016}
\end{eqnarray}
We define
\begin{eqnarray}
&&v(t):=U(-t)u.\label{3.017}
\end{eqnarray}
Then, by using \eqref{3.014} and \eqref{3.017}, we have
\begin{eqnarray}
&&\|u\|_{X_{s,b}}=\left\|J_{x}^{s}J_{t}^{b}v\right\|_{L_{xt}^{2}}.\label{3.018}
\end{eqnarray}
By using Lemma 3.5, \eqref{3.014}, \eqref{3.018} and the fact that $u\in X_{s,b}(\R^{2})$, we have
\begin{eqnarray}
&&\|v\|_{L_{t}^{\infty}H^{s}}\leq C
\left\|J_{x}^{s}J_{t}^{b}v\right\|_{L_{xt}^{2}}=C\|u\|_{X_{s,b}}.\label{3.019}
\end{eqnarray}
From $u\in X_{s,b}(\R^{2})$, we have that
 for any $\epsilon>0$, there exists $M\geq 2026$, such that
\begin{eqnarray}
&&\int_{|\tau|\geq M}\langle\tau\rangle^{2b}
\int_{\SR}\langle\xi\rangle^{2s}|\mathscr{F}_{xt}v(\tau,\xi)|^{2}d\xi d\tau\leq \epsilon^{2}.\label{3.020}
\end{eqnarray}
From \eqref{3.020}, by using the H\"older inequality,   we have that
\begin{eqnarray}
&&\int_{|\tau|\geq M}\left(\int_{\SR}\langle\xi\rangle^{2s}|\mathscr{F}v(\tau,\xi)|^{2}d\xi\right)^{1/2}d\tau\nonumber\\
&&\leq \left[\int_{|\tau|\geq M}\langle\tau\rangle^{2b}\int_{\SR}\langle\xi\rangle^{2s}|\mathscr{F}v(\tau,\xi)|^{2}d\xi d\tau\right]^{1/2}
\times \left[\int_{|\tau|\geq M}\langle\tau\rangle^{-2b}d\tau\right]^{1/2}\nonumber\\
&&\leq C\epsilon.\label{3.021}
\end{eqnarray}
It follows from the triangle inequality that
\begin{eqnarray}
&&\Big|\|u(t_{1})\|_{H^{s}}-\|u(t_{2})\|_{H^{s}}\Big|\nonumber\\&&=\Big|\|U(t_{1})v(t_{1})\|_{H^{2}}-\|U(t_{2})v(t_{2})\|_{H^{s}}\Big|
\leq \|U(t_{1})v(t_{1})-U(t_{2})v(t_{2})\|_{H^{s}}\nonumber\\
&&\leq \|(U(t_{1})-U(t_{2}))v(t_{1})\|_{H^{s}}+\|U(t_{2})(v(t_{1})-v(t_{2}))\|_{H^{s}}\nonumber\\
&&\leq \|(U(t_{1})-U(t_{2}))v(t_{1})\|_{H^{s}}+\|v(t_{1})-v(t_{2})\|_{H^{s}}\nonumber\\
&&=I_{1}+I_{2},\label{3.022}
\end{eqnarray}
where
\begin{eqnarray*}
&&I_{1}=\left\|\left(U(t_{1})-U(t_{2})\right)v(t_{1})\right\|_{H^{s}},\, I_{2}=\|v(t_{1})-v(t_{2})\|_{H^{s}}.
\end{eqnarray*}
For $I_{1}$, note that for $M\geq2026$, we have
\begin{eqnarray}
&&\left\|\left(U(t_{1})-U(t_{2})\right)v(t_{1})\right\|_{H^{s}}^{2}=\int_{\SR}\langle\xi\rangle^{2s}
\left|\left(e^{it_{1}\phi(\xi)}-e^{it_{2}\phi(\xi)}\right)
\mathscr{F}_{x}v(\xi,t_{1})\right|^{2}d\xi\nonumber\\
&&=\int_{0\leq |\xi|\leq \epsilon^{4}}\langle\xi\rangle^{2s}
\left|\left(e^{it_{1}\phi(\xi)}-e^{it_{2}\phi(\xi)}\right)
\mathscr{F}_{x}v(\xi,t_{1})\right|^{2}d\xi\nonumber\\
&&\quad+\int_{\epsilon^{4} \leq |\xi|\leq 1}\langle\xi\rangle^{2s}
\left|\left(e^{it_{1}\phi(\xi)}-e^{it_{2}\phi(\xi)}\right)
\mathscr{F}_{x}v(\xi,t_{1})\right|^{2}d\xi\nonumber\\
&&\quad+\int_{1 \leq |\xi|\leq M}\langle\xi\rangle^{2s}
\left|\left(e^{it_{1}\phi(\xi)}-e^{it_{2}\phi(\xi)}\right)
\mathscr{F}_{x}v(\xi,t_{1})\right|^{2}d\xi\nonumber\\
&&\quad+\int_{|\xi|\geq M}\langle\xi\rangle^{2s}
\left|\left(e^{it_{1}\phi(\xi)}-e^{it_{2}\phi(\xi)}\right)
\mathscr{F}_{x}v(\xi,t_{1})\right|^{2}d\xi\nonumber\\
&&=I_{11}+I_{12}+I_{13}+I_{14},\label{3.023}
\end{eqnarray}
where
\begin{eqnarray*}
&&I_{11}=\int_{0\leq |\xi|\leq \epsilon^{4}}\langle\xi\rangle^{2s}
\left|\left(e^{it_{1}\phi(\xi)}-e^{it_{2}\phi(\xi)}\right)
\mathscr{F}_{x}v(\xi,t_{1})\right|^{2}d\xi,\\
&&I_{12}=\int_{\epsilon^{4} \leq |\xi|\leq 1}\langle\xi\rangle^{2s}
\left|\left(e^{it_{1}\phi(\xi)}-e^{it_{2}\phi(\xi)}\right)
\mathscr{F}_{x}v(\xi,t_{1})\right|^{2}d\xi,\\
&&I_{13}=\int_{1 \leq |\xi|\leq M}\langle\xi\rangle^{2s}
\left|\left(e^{it_{1}\phi(\xi)}-e^{it_{2}\phi(\xi)}\right)
\mathscr{F}_{x}v(\xi,t_{1})\right|^{2}d\xi,\\
&&I_{14}=\int_{|\xi|\geq M}\langle\xi\rangle^{2s}
\left|\left(e^{it_{1}\phi(\xi)}-e^{it_{2}\phi(\xi)}\right)
\mathscr{F}_{x}v(\xi,t_{1})\right|^{2}d\xi.
\end{eqnarray*}
For $I_{11}$, by using \eqref{3.03},   we have
\begin{eqnarray}
&&I_{11}=\int_{0\leq |\xi|\leq \epsilon^{4}}\langle\xi\rangle^{2s}
\left|\left(e^{it_{1}\phi(\xi)}-e^{it_{2}\phi(\xi)}\right)\right|^{2}
\left|\int_{\SR}e^{it_{1}\tau}\mathscr{F}v(\xi,\tau)d\tau\right|^{2}d\xi\nonumber\\
&&\leq 4\int_{0\leq |\xi|\leq \epsilon^{4}}\langle\xi\rangle^{2s}\left\|
\mathscr{F}v\right\|_{L_{\tau}^{1}}^{2}d\xi\nonumber\\
&&\leq C\epsilon^{2}.\label{3.024}
\end{eqnarray}
For $I_{12}$,  by using the Cauchy-Schawarz inequality with respect to $\tau,$ we have
\begin{eqnarray}
&&I_{12}=\int_{\epsilon^{4} \leq |\xi|\leq 1}\langle\xi\rangle^{2s}
\left|\left(e^{it_{1}\phi(\xi)}-e^{it_{2}\phi(\xi)}\right)\right|^{2}
\left|\int_{\SR}e^{it_{1}\tau}\mathscr{F}v(\xi,\tau)\right|^{2}d\xi\nonumber\\
&&\leq C\epsilon^{-8}|t_{1}-t_{2}|^{2}\int_{\epsilon^{4}\leq |\xi|\leq 1}\langle\xi\rangle^{2s}
\|\mathscr{F}v\|_{L_{\tau}^{1}}^{2}d\xi\nonumber\\
&&\leq C\epsilon^{-8}|t_{1}-t_{2}|^{2}\left[\int_{\epsilon^{4}\leq |\xi|\leq 1}\langle\xi\rangle^{2s}
\|\langle\tau\rangle^{b}\mathscr{F}v\|_{L_{\tau}^{2}}^{2}d\xi\right]\int_{\SR}\langle\tau\rangle^{-2b}d\tau\nonumber\\
&&\leq C\epsilon^{-8}|t_{1}-t_{2}|^{2}\left\|J_{x}^{s}J_{t}^{b}v\right\|_{L_{xt}^{2}}^{2}\nonumber\\
&&\leq C_{1}\epsilon^{-8}|t_{1}-t_{2}|^{2}.\label{3.025}
\end{eqnarray}
For $I_{13}$,  by using the Cauchy-Schawarz inequality with respect to $\tau,$  we have
\begin{eqnarray}
&&I_{13}=\int_{1 \leq |\xi|\leq M}\langle\xi\rangle^{2s}
\left|\left(e^{it_{1}\phi(\xi)}-e^{it_{2}\phi(\xi)}\right)\right|^{2}
\left(\int_{\SR}e^{it_{1}\tau}\mathscr{F}v(\xi,\tau)d\tau\right)^{2}d\xi\nonumber\\
&&\leq C M^{6}|t_{1}-t_{2}|^{2}\int_{1\leq |\xi|\leq M}\langle\xi\rangle^{2s}\left\|
\mathscr{F}v\right\|_{L_{\tau}^{1}}^{2}d\xi\nonumber\\
&&\leq CM^{6}|t_{1}-t_{2}|^{2}\left[\int_{1\leq |\xi|\leq M}\langle\xi\rangle^{2s}
\|\langle\tau\rangle^{b}\mathscr{F}v\|_{L_{\tau}^{2}}^{2}d\xi\right]\int_{\SR}\langle\tau\rangle^{-2b}d\tau\nonumber\\
&&\leq CM^{6}|t_{1}-t_{2}|^{2}\left\|J_{x}^{s}J_{t}^{b}v\right\|_{L_{xt}^{2}}^{2}\leq CM^{6}|t_{1}-t_{2}|^{2}.\label{3.026}
\end{eqnarray}
For $I_{14}$, by using (\ref{3.04}),   we have
\begin{eqnarray}
&&I_{14}=\int_{|\xi|\geq M}\langle\xi\rangle^{2s}
\left|\left(e^{it_{1}\phi(\xi)}-e^{it_{2}\phi(\xi)}\right)\right|^{2}
\left|\int_{\SR}e^{it_{1}\tau}\mathscr{F}v(\xi,\tau)d\tau\right|^{2}d\xi\nonumber\\
&&\leq 4\int_{|\xi|\geq M}\langle\xi\rangle^{2s}\left\|
\mathscr{F}v\right\|_{L_{\tau}^{1}}^{2}d\xi\leq C\epsilon^{2}.\label{3.027}
\end{eqnarray}
By using \eqref{3.024}-\eqref{3.027}, we have
\begin{eqnarray}
&&I_{1}\leq  C_{2}(\epsilon^{-4}|t_{1}-t_{2}|+M^{3}|t_{1}-t_{2}|+\epsilon).\label{3.028}
\end{eqnarray}
For $I_{2}$,  note that for $M\geq2026$, by Minkowski's inequality, we have
\begin{eqnarray}
&&\|v(t_{1})-v(t_{2})\|_{H^{s}}=\left\|\int_{\SR}
\left(e^{it_{1}\tau}-e^{it_{2}\tau}\right)
\mathscr{F}_{t}v(x,\tau)d\tau\right\|_{H^{s}}\nonumber\\
&&=\left(\int_{\SR}\langle\xi\rangle^{2s}\left|\int_{\SR}
\left(e^{it_{1}\tau}-e^{it_{2}\tau}\right)
\mathscr{F}v(\xi,\tau)d\tau\right|^{2}d\xi\right)^{\frac{1}{2}}\nonumber\\
&&\leq \left(\int_{\SR}\langle\xi\rangle^{2s}\left(\int_{\SR}
\left|\left(e^{it_{1}\tau}-e^{it_{2}\tau}\right)
\mathscr{F}v(\xi,\tau)\right|d\tau\right)^{2}d\xi\right)^{\frac{1}{2}}\nonumber\\
&&\leq \int_{\SR}\left|e^{it_{1}\tau}-e^{it_{2}\tau}\right|
\left(\int_{\SR}\langle\xi\rangle^{2s}
\left|\mathscr{F}v(\xi,\tau)\right|^{2}d\xi\right)^{\frac{1}{2}} d\tau\nonumber\\
&&=\int_{|\tau|\leq M}\left|e^{it_{1}\tau}-e^{it_{2}\tau}\right|
\left(\int_{\SR}\langle\xi\rangle^{2s}
\left|\mathscr{F}v(\xi,\tau)\right|^{2}d\xi\right)^{\frac{1}{2}} d\tau\nonumber\\
&&\quad+\int_{|\tau|\geq M}\left|e^{it_{1}\tau}-e^{it_{2}\tau}\right|
\left(\int_{\SR}\langle\xi\rangle^{2s}
\left|\mathscr{F}v(\xi,\tau)\right|^{2}d\xi\right)^{\frac{1}{2}} d\tau\nonumber\\
&&=I_{21}+I_{22},\label{3.029}
\end{eqnarray}
where
\begin{eqnarray*}
&&I_{21}=\int_{|\tau|\leq M}\left|e^{it_{1}\tau}-e^{it_{2}\tau}\right|
\left(\int_{\SR}\langle\xi\rangle^{2s}
\left|\mathscr{F}v(\xi,\tau)\right|^{2}d\xi\right)^{\frac{1}{2}} d\tau,\\
&&I_{22}=\int_{|\tau|\geq M}\left|e^{it_{1}\tau}-e^{it_{2}\tau}\right|
\left(\int_{\SR}\langle\xi\rangle^{2s}
\left|\mathscr{F}v(\xi,\tau)\right|^{2}d\xi\right)^{\frac{1}{2}} d\tau.
\end{eqnarray*}
For $I_{21}$,  we have
\begin{eqnarray}
&&I_{21}=\int_{|\tau|\leq M}\left|e^{it_{1}\tau}-e^{it_{2}\tau}\right|\left(\int_{\SR}\langle\xi\rangle^{2s}
\left|\mathscr{F}v(\xi,\tau)\right|^{2}d\xi\right)^{\frac{1}{2}} d\tau\nonumber\\
&&\leq M|t_{1}-t_{2}|\int_{|\tau|\leq M}\left(\int_{\SR}\langle\xi\rangle^{2s}
\left|\mathscr{F}v(\xi,\tau)\right|^{2}d\xi\right)^{\frac{1}{2}} d\tau\nonumber\\
&&\leq M|t_{1}-t_{2}|\left\|J_{x}^{s}J_{t}^{b}v\right\|_{L_{xt}^{2}}\nonumber\\
&&\leq CM|t_{1}-t_{2}|.\label{3.030}
\end{eqnarray}
For $I_{22}$, from \eqref{3.021}, we have
\begin{eqnarray}
&&I_{22}=\int_{|\tau|\geq M}\left|e^{it_{1}\tau}-e^{it_{2}\tau}\right|\left(\int_{\SR}\langle\xi\rangle^{2s}
\left|\mathscr{F}v(\xi,\tau)\right|^{2}d\xi\right)^{\frac{1}{2}} d\tau.\nonumber\\
&&\leq 4\int_{|\tau|\geq M}\left(\int_{\SR}\langle\xi\rangle^{2s}
\left|\mathscr{F}v(\xi,\tau)\right|^{2}d\xi\right)^{\frac{1}{2}} d\tau
\leq C\epsilon.\label{3.031}
\end{eqnarray}
From \eqref{3.030}-\eqref{3.031}, we have
\begin{eqnarray}
&&I_{2}\leq C(M|t_{1}-t_{2}|+\epsilon).\label{3.032}
\end{eqnarray}
From \eqref{3.028} and \eqref{3.032}, we have
\begin{eqnarray}
&&I_{1}+I_{2}\leq C_{3}(M|t_{1}-t_{2}|+\epsilon^{-4}|t_{1}-t_{2}|+M^{3}|t_{1}-t_{2}|+\epsilon).\label{3.033}
\end{eqnarray}
When  $|t_{1}-t_{2}|<\delta<{\rm min}\left\{\frac{\epsilon}{M^{3}},\epsilon^{5}\right\}$, then, we have
$I_{1}+I_{2}\leq C\epsilon,$ thus, we have
\begin{eqnarray}
&&\Big|\|u(t_{1})\|_{H^{s}}-\|u(t_{2})\|_{H^{s}}\Big|=
\Big|\|U(t_{1})v(t_{1})\|_{H^{s}}-\|U(t_{2})v(t_{2})\|_{H^{s}}\Big|<\epsilon.\label{3.034}
\end{eqnarray}

This completes the proof of Lemma 3.6.

To  prove  Theorem 1.1, it suffices to prove Lemma 3.7.

\begin{Lemma}\label{Lemma3.7}
Let  $b>\frac{1}{2},s>1/2$, we have
\begin{eqnarray}
&& X_{s,b}(\R^{2})\hookrightarrow C(\R;L_{x}^{\infty}).\label{3.035}
\end{eqnarray}
\end{Lemma}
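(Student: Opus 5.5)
We prove Lemma \ref{Lemma3.7} by composing the embedding of Lemma \ref{Lemma3.6} with the Sobolev embedding $H^{s}(\R)\hookrightarrow L^{\infty}(\R)$ for $s>\frac{1}{2}$. Concretely, for $u\in X_{s,b}(\R^{2})$ with $b>\frac12$, Lemma \ref{Lemma3.6} gives that $t\mapsto u(t)$ is a continuous map $\R\to H^{s}(\R)$, together with the uniform bound
\[
\sup_{t\in\R}\|u(t)\|_{H^{s}}\leq C\|u\|_{X_{s,b}},
\]
which is \eqref{3.015}. The only remaining point is that the linear map $H^{s}\to L^{\infty}$ is bounded when $s>\frac12$. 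This follows from the Cauchy--Schwarz argument already used in \eqref{2.031}:
\[
\|g\|_{L^{\infty}}\leq C\|\mathscr{F}_{x}g\|_{L^{1}_{\xi}}\leq C\Big(\int_{\SR}\langle\xi\rangle^{-2s}d\xi\Big)^{1/2}\|g\|_{H^{s}}\leq C\|g\|_{H^{s}}.
\]

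The key steps, in order, are as follows. First, take $t_{1},t_{2}\in\R$ and apply the bound above to $g=u(t_{1})-u(t_{2})$. This gives
\[
\|u(t_{1})-u(t_{2})\|_{L^{\infty}_{x}}\leq C\|u(t_{1})-u(t_{2})\|_{H^{s}}.
\]
Second, invoke continuity in $H^{s}$ to make the right-hand side small when $|t_{1}-t_{2}|$ is small. This yields $u\in C(\R;L^{\infty}_{x})$. Third, combine the two displays to get the norm bound
\[
\sup_{t}\|u(t)\|_{L^{\infty}_{x}}\leq C\|u\|_{X_{s,b}},
\]
which makes the inclusion continuous. This bound is also consistent with Lemma \ref{Lemma2.8}.

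The main obstacle is that Lemma \ref{Lemma3.6}, as literally proved, shows continuity of the scalar function $t\mapsto\|u(t)\|_{H^{s}}$, whereas we need continuity of $t\mapsto u(t)$ in $H^{s}$. However, the estimate actually established in the proof of Lemma \ref{Lemma3.6} is the stronger one: $I_{1}+I_{2}$ bounds $\|u(t_{1})-u(t_{2})\|_{H^{s}}$ itself, not merely the difference of norms, via \eqref{3.022}--\eqref{3.033}. So I would reread that chain to obtain
\[
\|u(t_{1})-u(t_{2})\|_{H^{s}}\leq C\epsilon\quad\text{for }|t_{1}-t_{2}|<\delta,
\]
and then feed this bound into the Sobolev embedding above. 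If a cleaner route is preferred, one can instead approximate $\mathscr{F}u$ in the weighted $L^{2}$ norm by compactly supported smooth functions. Each such approximant is trivially continuous into $H^{s}$, and the uniform limit under \eqref{3.015} preserves continuity.
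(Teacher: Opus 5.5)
Your proposal is correct and is essentially the paper's argument: the paper also bounds $\|u(t_{1})-u(t_{2})\|_{L_{x}^{\infty}}\leq C\|u(t_{1})-u(t_{2})\|_{H^{s}}$ via $H^{\frac{1}{2}+\epsilon}(\R)\hookrightarrow L^{\infty}(\R)$ and then uses continuity in $H^{s}$. Your remark about Lemma \ref{Lemma3.6} is accurate---the paper's proof silently needs smallness of $\|u(t_{1})-u(t_{2})\|_{H^{s}}$ itself (it cites Lemma \ref{Lemma3.5} at that point), which is exactly what $I_{1}+I_{2}$ in \eqref{3.022}--\eqref{3.033} controls, and your density alternative reaches the same conclusion more cleanly.
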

\noindent{\bf Proof.} By using $H^{\frac{1}{2}+\epsilon}(\R)\hookrightarrow L^{\infty}(\R)$
 and Lemma \ref{Lemma3.5}, for $\forall \epsilon>0$,  $\exists \delta>0,$
when $|t_{1}-t_{2}|<\delta,$ by using  the triangle  inequality,  we have that
\begin{eqnarray}
&&\left|\|u(t_{1})\|_{L_{x}^{\infty}}-\|u(t_{2})\|_{L_{x}^{\infty}}\right|\nonumber\\
&&\leq \|u(t_{1})-u(t_{2})\|_{L_{x}^{\infty}}\leq C\|u(t_{1})-u(t_{2})\|_{H_{x}^{s}}<\epsilon.\label{3.036}
\end{eqnarray}

This completes the proof of Lemma 3.7.

\bigskip

\section{ Multilinear estimate}

\setcounter{equation}{0}

 \setcounter{Theorem}{0}

\setcounter{Lemma}{0}

 \setcounter{section}{4}
 This section  is  devoted to proving Lemma 4.1, which is used to establishing Theorem 1.4.

\begin{Lemma}\label{Lemma4.1}
Assume that  $s\geq\frac{1}{2}-\frac{2}{k+1}+88\epsilon,k\geq6$,
$s_{0}=\frac{1}{2}+\epsilon,$ $s_{1}=\frac{6-40\epsilon}{12+\epsilon},$ $b_{0}=-\frac{1}{2}+\frac{\epsilon}{12},$
and $b_{1}=\frac{1}{2}+\frac{\epsilon}{24}$.
Then, we derive
\begin{eqnarray}
\left\|\partial_{x}\left(\prod\limits_{j=1}^{k+1}\Psi(t)u_{j}\right)
\right\|_{X_{s_{0},b_{0}}}
\leq C\prod_{j=1}^{k+1}\|u_{j}\|_{X_{s,b_{1}}}.\label{4.01}
\end{eqnarray}
\end{Lemma}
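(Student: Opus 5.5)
By duality, \eqref{4.01} is equivalent to a bound on a $(k+2)$-linear form. Write $F_{j}(\xi_{j},\tau_{j})=\langle\xi_{j}\rangle^{s}\langle\sigma_{j}\rangle^{b_{1}}|\mathscr{F}(\Psi(t)u_{j})(\xi_{j},\tau_{j})|$ for $1\leq j\leq k+1$. Let $h\geq0$ with $\|h\|_{L^{2}_{\xi\tau}}\leq1$. It then suffices to show
\begin{eqnarray*}
\int_{\xi=\sum\xi_{j},\,\tau=\sum\tau_{j}}\frac{|\xi|\langle\xi\rangle^{s_{0}}h(\xi,\tau)}{\langle\sigma\rangle^{-b_{0}}\prod_{j=1}^{k+1}\langle\xi_{j}\rangle^{s}\langle\sigma_{j}\rangle^{b_{1}}}\prod_{j=1}^{k+1}F_{j}\,d\delta
\leq C\prod_{j=1}^{k+1}\|F_{j}\|_{L^{2}_{\xi\tau}}.
\end{eqnarray*}
The cutoff $\Psi(t)$ is harmless, because $\|\Psi(t)u\|_{X_{s,b}}\leq C\|u\|_{X_{s,b}}$. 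By symmetry we order the frequencies so that $|\xi_{1}|\geq|\xi_{2}|\geq\cdots\geq|\xi_{k+1}|$. Then $|\xi|\lesssim(k+1)|\xi_{1}|$. When $|\xi|\leq 8$ the factor $|\xi|\langle\xi\rangle^{s_{0}}$ is bounded, and we can place all $k+1$ factors in $L^{\frac{2(k+1)}{1}}$-type spaces by \eqref{2.05}/\eqref{2.07} and close with H\"older, since $s\geq0$ and the dual function sits in $L^{2}$. So assume $|\xi|\geq8$, hence $|\xi_{1}|\gtrsim|\xi|$. The weight to be absorbed is then roughly $|\xi_{1}|^{1+s_{0}-s}=|\xi_{1}|^{\frac{3}{2}+\epsilon-s}$, which equals $|\xi_{1}|^{1+\frac{2}{k+1}-87\epsilon}$ at the endpoint. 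Everything except this power has to be absorbed by smoothing estimates.

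The regions are split according to the sizes of $\xi_{2},\xi_{3}$ relative to $\xi_{1}$, as in the well-posedness argument of \cite{YY}.

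\emph{Case A: $|\xi_{2}|\ll|\xi_{1}|$.} Here $|\phi'(\xi_{1})-\phi'(\xi_{2})|\sim|\xi_{1}|^{2}$. Lemma \ref{Lemma2.2} with $s=\frac12$ recovers one full derivative $|\xi_{1}|$ from the pair $(u_{1},u_{2})$ in $L^{2}_{xt}$. Alternatively, \eqref{2.09} puts $\partial_{x}P^{N}u_{1}$ in $L^{\infty}_{x}L^{2}_{t}$, and the maximal estimate \eqref{2.024} (high frequency) together with \eqref{2.027} (low frequency) puts the remaining factors in $L^{2k}_{x}L^{\infty}_{t}$. This costs $\frac12-\frac{1}{2k}+\epsilon$ derivatives each. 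The leftover $|\xi_{1}|^{\frac{2}{k+1}}$ is moved onto $u_{1}$ through \eqref{2.011} or \eqref{2.013}. When $|\xi|\sim|\xi_{1}|$ it can also be taken from the dual side using $\langle\sigma\rangle^{-b_{0}}$ and \eqref{2.08}, \eqref{2.012}. These two estimates were designed exactly for this: the $X_{0,\frac12-\frac{\epsilon}{12}}$ norms they control accommodate $b_{0}=-\frac12+\frac{\epsilon}{12}$.

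\emph{Case B: $|\xi_{1}|\sim|\xi_{2}|$.} Now both top factors carry the weight $\langle\xi_{1}\rangle^{-s}\langle\xi_{2}\rangle^{-s}$, so the net power is $|\xi_{1}|^{1+s_{0}-2s}$. We put $u_{1},u_{2}$ in $L^{\infty}_{x}L^{2}_{t}$ through the local smoothing estimate \eqref{2.09}, which gains one derivative each. The others go in $L^{\infty}_{xt}$ or in maximal spaces via \eqref{2.024} and \eqref{2.027}, and the dual function goes in $L^{2}$. The count closes comfortably because $k\geq6$.

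The sub-case where the low frequencies $|\xi_{j}|\leq2$ meet the singular point $\xi=0$ of $\phi$ is handled by \eqref{2.026} and \eqref{2.027} (Lemma \ref{Lemma2.7}), which put $P_{2}\Psi(t)u_{j}$ in $L^{\infty}_{xt}$ at no cost.

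The main obstacle is the endpoint of Case A, where $\xi_{1}$ is the only high frequency and $|\xi|\sim|\xi_{1}|$. There we must recover $1+\frac{2}{k+1}$ derivatives using only one function $u_{1}$ and the resonance weight $\langle\sigma\rangle^{\frac12-\frac{\epsilon}{12}}$ on the dual side. The plan is to use the resonance identity: since $\phi(\xi)-\sum\phi(\xi_{j})$ is of size $|\xi_{1}|^{2}|\xi-\xi_{1}|$ when $\xi-\xi_{1}$ is not too small, $\max(\langle\sigma\rangle,\langle\sigma_{j}\rangle)\gtrsim|\xi_{1}|^{2}|\xi-\xi_{1}|$. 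When the sum of the other frequencies is small, we instead rely on \eqref{2.011} for $u_{1}$ combined with \eqref{2.027}. Each sub-case then reduces to H\"older with exponents chosen so that $\frac{1}{\epsilon}$, $\frac{2}{1-2\epsilon}$ and $\frac{12}{2-3\epsilon}$ sum correctly. This bookkeeping is what forces the margin $88\epsilon$ and the specific choice of $s_{1}$.
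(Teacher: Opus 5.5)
Your Case A rests on $|\phi'(\xi_{1})-\phi'(\xi_{2})|\sim|\xi_{1}|^{2}$, and this is false. Since $\phi'(\xi)=3\xi^{2}+\xi^{-2}$, you have $\phi'(\xi_{1})-\phi'(\xi_{2})=3(\xi_{1}^{2}-\xi_{2}^{2})\bigl(1-\frac{1}{3\xi_{1}^{2}\xi_{2}^{2}}\bigr)$, which vanishes on the curve $\xi_{1}^{2}\xi_{2}^{2}=\frac13$. That curve lies inside $|\xi_{2}|\ll|\xi_{1}|$ (take $|\xi_{2}|\approx(\sqrt{3}|\xi_{1}|)^{-1}$), and there Lemma \ref{Lemma2.2} recovers nothing. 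This degeneracy, caused by the singular point of $\phi$ at $0$, is what the paper's proof is organized around. In every region it splits according to whether $|1-\frac{1}{3\xi_{a}^{2}\xi_{b}^{2}}|\geq\frac12$ or $<\frac12$; see \eqref{4.04}--\eqref{4.05}, \eqref{4.004}--\eqref{4.005} and \eqref{4.011}--\eqref{4.012}. On the degenerate piece it uses $|\xi_{a}|\sim|\xi_{b}|^{-1}$ to convert surplus high-frequency weight into a positive power of the small frequency. That conversion is indispensable. The estimates \eqref{2.010} and \eqref{2.027} control low frequencies in $L^{p}_{x}L^{\infty}_{t}$ with $p<\infty$ only when a positive power of $|\xi_{j}|$ (about $|\xi_{j}|^{1/4}$) is attached. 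Only \eqref{2.026} is free, and only in $L^{\infty}_{xt}$. So low-frequency factors cannot be placed in $L^{2k}_{x}L^{\infty}_{t}$ for free, as you propose. Your resonance bound $\max(\langle\sigma\rangle,\langle\sigma_{j}\rangle)\gtrsim|\xi_{1}|^{2}|\xi-\xi_{1}|$ has the same blind spot. It drops the $-\xi^{-1}$ part of $\phi$, and $\sum_{j}\xi_{j}^{-1}$ can be comparable to $|\xi_{1}|^{2}|\xi-\xi_{1}|$ exactly when some $\xi_{j}$ are small.

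Even away from the degenerate set, your derivative count does not close. After $I^{1/2}(F_{1},F_{2})$ or \eqref{2.09} recovers $|\xi_{1}|$, a weight $|\xi_{1}|^{\frac{2}{k+1}-87\epsilon}$ remains on the top frequency. It cannot be shifted onto smaller frequencies, and $u_{1}$ is already used. Applying \eqref{2.012} to the dual function gains only about $\frac16$, which for small $\epsilon$ is less than $\frac{2}{k+1}-87\epsilon$ when $k\leq10$. The variant with \eqref{2.09} and $k$ factors in $L^{2k}_{x}L^{\infty}_{t}$ is worse. Each factor costs $\frac12-\frac{1}{2k}+\epsilon$, which exceeds $s$ near the endpoint, so in total you are short by about $\frac32$ derivatives.

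The missing idea is a second bilinear estimate pairing the dual function with $u_{3}$. The paper bounds $I_{1}$ by $\|I^{1/2}(F_{1},F_{2})\|_{L^{2}_{xt}}\|I^{s_{1}}(F,F_{3})\|_{L^{2}_{xt}}\prod_{j\geq4}\|J_{x}^{-r}F_{j}\|_{L^{\infty}_{xt}}$ with $r>\frac12$. The middle factor gains $|\xi^{2}-\xi_{3}^{2}|^{s_{1}}\sim|\xi_{1}|^{2s_{1}}\approx|\xi_{1}|$ from the dual side. This is where $s_{1}=\frac{6-40\epsilon}{12+\epsilon}$ comes from: it gives $\frac{2+2s_{1}}{3}b_{1}=\frac12-\frac{13\epsilon}{12}\leq-b_{0}$, so the dual has enough modulation regularity for Lemma \ref{Lemma2.2}. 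The surplus $|\xi_{1}|^{1-\frac{2}{k+1}}$ then pays for $u_{4},\dots,u_{k+1}$ in $L^{\infty}_{xt}$ via Lemma \ref{Lemma2.8}. The balance is exact at $s=\frac12-\frac{2}{k+1}$, and the $88\epsilon$ margin covers the losses. In your plan $s_{1}$ never enters any estimate.

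Your Case B also fails as written. Two factors in $L^{\infty}_{x}L^{2}_{t}$ plus the dual in $L^{2}_{xt}$ give time exponents summing to $\frac32$. The dual, lying only in $X_{0,\frac12-\frac{\epsilon}{12}}$, cannot go in an $L^{\infty}_{t}$-type space. The paper instead pairs one local-smoothing factor $D_{x}F_{2}\in L^{\infty}_{x}L^{2}_{t}$ with one maximal factor $D_{x}^{\frac14+\epsilon}F_{3}\in L^{2}_{x}L^{\infty}_{t}$; see \eqref{4.016}. That degenerate subcase of $\Omega_{2}$ is exactly where $k\geq6$ is needed, so the count there does not close "comfortably".

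A minor point: for $|\xi|\leq8$ the correct reason is $s\geq\frac12-\frac{2}{k+1}$, which \eqref{2.05} requires with $q=2(k+1)$, not $s\geq0$.
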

\noindent {\bf Proof.} To show (\ref{4.01}),
by duality,  it  is sufficient  to show
\begin{eqnarray}
\left|\int_{\SR^{2}}J^{s_{0}}\partial_{x}
\left(\prod_{j=1}^{k+1}\Psi(t)u_{j}\right)\bar{h}dxdt
\right|\leq C\|h\|_{X_{0,-b_{0}}}
\prod_{j=1}^{k+1}\|u_{j}\|_{X_{s,b_{1}}}\label{4.02}.
\end{eqnarray}
Define
\begin{eqnarray*}
g(\xi,\tau):=\langle\sigma\rangle^{b_{0}} \mathscr{F}h(\xi,\tau),
f_{j}(\xi_{j},\tau_{j}):=\langle\xi_{j}\rangle^{s}\langle\sigma_{j}\rangle^{b_{1}}
\mathscr{F}\Psi(t)u_{j}(\xi_{j},\tau_{j})(1\leq j\leq k+1).
\end{eqnarray*}
Based on  the Plancherel identity, to show (\ref{4.02}), it  is sufficient to show
\begin{eqnarray}
\int_{\xi=\sum\limits_{j=1}^{k+1}\xi_{j}}\int_{\tau=\sum\limits_{j=1}^{k+1}\tau_{j}}
\frac{|\xi|\langle\xi\rangle^{s_{0}}g\prod\limits_{j=1}^{k+1}f_{j}}
{\langle\sigma\rangle^{-b_{0}}\prod\limits_{j=1}^{k+1}
\langle\xi_{j}\rangle^{s}\langle\sigma_{j}\rangle^{b}}d\delta
&&\leq C\|g\|_{L^{2}}\prod\limits_{j=1}^{k+1}\|f_{j}\|_{L^{2}}.\label{4.03}
\end{eqnarray}
Here $d\delta=d\xi_{1}d\xi_{2}\cdot \cdot \cdot d\xi_{k}
d\xi d\tau_{1}d\tau_{2}\cdot \cdot \cdot d\tau_{k}d\tau.$

\noindent Now we introduce some definitions:
\begin{eqnarray*}
&&K(\xi_{1},\xi_{2},\cdot\cdot\cdot,\xi_{k},\xi,\tau_{1},\tau_{2},\cdot\cdot\cdot,\tau_{k},\tau)
=:\frac{|\xi|\langle\xi\rangle^{s_{0}}}
{\langle\sigma\rangle^{-b_{0}}\prod\limits_{j=1}^{k+1}
\langle\xi_{j}\rangle^{s}\langle\sigma_{j}\rangle^{b_{1}}},\nonumber\\&&
\mathscr{F}F:=\frac{g}{\langle\sigma\rangle^{-b_{0}}},
\mathscr{F}F_{j}:=\frac{f_{j}}{\langle\sigma_{j}\rangle^{b_{1}}}
(1\leq j\leq k+1),\\
&&I_{1}:=\int_{\xi=\sum\limits_{j=1}^{k+1}\xi_{j}}
\int_{\tau=\sum\limits_{j=1}^{k+1}\tau_{j}}K(\xi_{1},\xi_{2},
\cdot\cdot\cdot,\xi_{k},\xi,\tau_{1},\tau_{2},\cdot\cdot\cdot,\tau_{k},\tau)
g\prod\limits_{j=1}^{k+1}f_{j}d\delta.
\end{eqnarray*}
By using the symmetry, we can give the assumption that $|\xi_{1}|\geq
 |\xi_{2}|\geq\cdot\cdot\cdot\geq |\xi_{k+1}|.$

\noindent By using a direct computation, we have
\begin{eqnarray*}
\Omega:=\left\{(\xi_{1},\xi_{2},\cdot\cdot\cdot,\xi_{k},\xi,\tau_{1},\tau_{2},
\cdot\cdot\cdot,\tau_{k},\tau)\in \R^{2(k+1)}:|\xi_{1}|\geq |\xi_{2}|\geq
\cdot\cdot\cdot\geq |\xi_{k+1}|\right\}\subset \bigcup\limits_{j=0}^{k+1}\Omega_{j}.
\end{eqnarray*}
Here,
\begin{eqnarray*}
&&\Omega_{0}=\left\{(\xi_{1},\xi_{2},\cdot\cdot\cdot,\xi_{k},\xi,\tau_{1},\tau_{2},
\cdot\cdot\cdot,\tau_{k},\tau)\in \Omega,|\xi_{1}|\leq80(k+1)\right\},\\
&&\Omega_{1}=\left\{(\xi_{1},\xi_{2},\cdot\cdot\cdot,\xi_{k},\xi,\tau_{1},\tau_{2},
\cdot\cdot\cdot,\tau_{k},\tau)\in \Omega_{0}^{c},|\xi_{1}|\geq80(k+1)|\xi_{2}|\right\},\\
&&\Omega_{2}=\left\{(\xi_{1},\xi_{2},\cdot\cdot\cdot,\xi_{k},\xi,\tau_{1},\tau_{2},
\cdot\cdot\cdot,\tau_{k},\tau)\in \Omega_{0}^{c}, |\xi_{1}|\sim |\xi_{2}|\geq 80(k+1)|\xi_{3}|\right\},\\
&&\Omega_{3}=\left\{(\xi_{1},\xi_{2},\cdot\cdot\cdot,\xi_{k},\xi,\tau_{1},\tau_{2},
\cdot\cdot\cdot,\tau_{k},\tau)\in \Omega_{0}^{c}, |\xi_{1}|\sim
 |\xi_{3}|\geq 80(k+1)|\xi_{4}|\right\},\\
&&\Omega_{4}=\left\{(\xi_{1},\xi_{2},\cdot\cdot\cdot,\xi_{k},\xi,\tau_{1},\tau_{2},
\cdot\cdot\cdot,\tau_{k},\tau)\in \Omega_{0}^{c},|\xi_{1}|\geq80(k+1), |\xi_{1}|\sim|\xi_{4}|\geq 80(k+1)|\xi_{5}|\right\},\\
&&\Omega_{5}=\left\{(\xi_{1},\xi_{2},\cdot\cdot\cdot,\xi_{k},\xi,\tau_{1},\tau_{2},
\cdot\cdot\cdot,\tau_{k},\tau)\in \Omega_{0}^{c}, |\xi_{1}|\sim  |\xi_{5}|\geq80(k+1)|\xi_{6}|\right\},\\
&&\Omega_{6}=\left\{(\xi_{1},\xi_{2},\cdot\cdot\cdot,\xi_{k},\xi,\tau_{1},\tau_{2},
\cdot\cdot\cdot,\tau_{k},\tau)\in \Omega_{0}^{c},|\xi_{1}|\sim |\xi_{l-1}|\geq 80(k+1)|\xi_{l}|(7\leq l\leq k+1)\right\},\\
&&\Omega_{7}=\left\{(\xi_{1},\xi_{2},\cdot\cdot\cdot,\xi_{k},\xi,\tau_{1},\tau_{2},
\cdot\cdot\cdot,\tau_{k},\tau)\in \Omega_{0}^{c}, |\xi_{1}|\sim  |\xi_{k+1}|\right\}.
\end{eqnarray*}
{\bf Case(1)}: $(\xi_{1},\xi_{2},\cdot\cdot\cdot,\xi_{k},\xi,\tau_{1},\tau_{2},\cdot\cdot\cdot,
\tau_{k},\tau)\in \Omega_{0}$. This case can be proved similarly to Case (1) of Lemma 3.1 of \cite{YY} with the aid of (\ref{2.05}).

\noindent{\bf Case(2)}: $(\xi_{1},\xi_{2},\cdot\cdot\cdot,\xi_{k},\xi,\tau_{1},\tau_{2},\cdot\cdot\cdot,\tau_{k},\tau)\in \Omega_{1}$, we consider
\begin{eqnarray}
&&|1-\frac{1}{3\xi_{1}^{2}\xi_{2}^{2}}|\geq\frac{1}{2},\label{4.04}\\
&&|1-\frac{1}{3\xi_{1}^{2}\xi_{2}^{2}}|<\frac{1}{2}.\label{4.05}
\end{eqnarray}
If (\ref{4.04}) is valid, we consider
\begin{eqnarray}
&&\left|1-\frac{1}{3\xi^{2}\xi_{3}^{2}}\right|\geq\frac{1}{2},\label{4.004}\\
&&\left|1-\frac{1}{3\xi^{2}\xi_{3}^{2}}\right|<\frac{1}{2}.\label{4.005}
\end{eqnarray}
If (\ref{4.004}) is valid,
we have
\begin{eqnarray}
&&K(\xi_{1},\xi_{2},\cdot\cdot\cdot,\xi_{k},\xi,\tau_{1},\tau_{2},\cdot\cdot\cdot,\tau_{k},\tau)\leq C
\frac{|\xi_{1}|^{\frac{2}{k+1}+\epsilon}|\xi_{1}^{2}-\xi_{2}^{2}|^{\frac{1}{2}}}
{\langle\sigma\rangle^{-b_{0}}\left(\prod\limits_{j=2}^{k+1}
\langle\xi_{j}\rangle^{s}\right)\left(\prod\limits_{j=1}^{k+1}\langle\sigma_{j}\rangle^{b_{1}}\right)}\nonumber\\
&&\leq C\frac{|\xi_{1}^{2}-\xi_{2}^{2}|^{\frac{1}{2}}
|\xi^{2}-\xi_{3}^{2}|^{s_1}|\xi_{1}|^{-\frac{12-80\epsilon}{12+\epsilon}+\frac{2}{k+1}+\epsilon}}
{\langle\sigma\rangle^{-b_{0}}\left(\prod\limits_{j=2}^{k+1}
\langle\xi_{j}\rangle^{s}\right)\left(\prod\limits_{j=1}^{k+1}\langle\sigma_{j}\rangle^{b_{1}}\right)}\nonumber\\
&&\leq C\frac{|\xi_{1}^{2}-\xi_{2}^{2}|^{\frac{1}{2}}|\xi^{2}-\xi_{3}^{2}|^{s_{1}}}
{\langle\sigma\rangle^{-b_{0}}\left(\prod\limits_{j=4}^{k+1}
\langle\xi_{j}\rangle^{\frac{ks}{k-2}+\frac{k-1}{(k+1)(k-2)}-\epsilon}\right)
\left(\prod\limits_{j=1}^{k+1}\langle\sigma_{j}\rangle^{b_{1}}\right)}
.\label{4.006}
\end{eqnarray}
By using  Lemmas \ref{Lemma2.2}, \ref{Lemma2.8} and (\ref{4.006}),
 since $s\geq\frac{1}{2}-\frac{2}{k+1}+88\epsilon,k\geq6$,
we have
\begin{eqnarray*}
&&I_{1}\leq C\left\|I^{1/2}(F_{1},F_{2})\right\|
  _{L_{xt}^{2}}\left\|I^{s_{1}}(F,F_{3})\right\|
  _{L_{xt}^{2}}\prod\limits_{j=4}^{k+1}\left\|J^{-\frac{ks}{k-2}-\frac{k-1}{(k+1)(k-2)}-\epsilon}F_{j}\right\|_{L_{xt}^{\infty}}
\nonumber\\&& \leq C\left(\prod_{j=1}^{k+1}\|f_{j}\|_{L_{\xi\tau}^{2}}\right)\|g\|_{L_{\xi\tau}^{2}}.
\end{eqnarray*}
If (\ref{4.005}) is valid,
we have $|\xi|\sim|\xi_{3}|^{-1}$, and
\begin{eqnarray}
&&K(\xi_{1},\xi_{2},\cdot\cdot\cdot,\xi_{k},\xi,\tau_{1},\tau_{2},\cdot\cdot\cdot,\tau_{k},\tau)\leq C
\frac{|\xi_{1}|^{\frac{2}{k+1}+\epsilon}|\xi_{1}^{2}-\xi_{2}^{2}|^{\frac{1}{2}}}
{\langle\sigma\rangle^{-b_{0}}\left(\prod\limits_{j=2}^{k+1}
\langle\xi_{j}\rangle^{s}\right)\left(\prod\limits_{j=1}^{k+1}\langle\sigma_{j}\rangle^{b_{1}}\right)}\nonumber\\
&&\leq C\frac{|\xi_{1}^{2}-\xi_{2}^{2}|^{\frac{1}{2}}|\xi|^{1-2\epsilon}|\xi_{3}|^{\frac{1-2\epsilon}{3}}}
{\langle\sigma\rangle^{-b_{0}}\left(\prod\limits_{j=1}^{k+1}\langle\sigma_{j}\rangle^{b_{1}}\right)}
.\label{4.007}
\end{eqnarray}
By using  Lemmas \ref{Lemma2.2}, \ref{Lemma2.8} and (\ref{4.007}), 
since $s\geq\frac{1}{2}-\frac{2}{k+1}+88\epsilon,k\geq6$,
we have
\begin{eqnarray*}
&&I_{1}\leq C\left\|I^{1/2}(F_{1},F_{2})\right\|
  _{L_{xt}^{2}}\left\|D^{1-2\epsilon}F\right\|
  _{L_{x}^{\frac{1}{\epsilon}}L_{t}^{2}}\left\|D^{\frac{1-2\epsilon}{3}}F_{3}\right\|
  _{L_{x}^{\frac{2}{1-2\epsilon}}L_{t}^{\infty}}\prod\limits_{j=4}^{k+1}\left\|F_{j}\right\|_{L_{xt}^{\infty}}
\nonumber\\&& \leq C\left(\prod_{j=1}^{k+1}\|f_{j}\|_{L_{\xi\tau}^{2}}\right)\|g\|_{L_{\xi\tau}^{2}}.
\end{eqnarray*}
When (\ref{4.05})  is valid,  we have $|\xi_{1}|\sim|\xi_{2}|^{-1}$. In this case,  we consider
\begin{eqnarray}
&&\left|1-\frac{1}{3\xi^{2}\xi_{k+1}^{2}}\right|\geq\frac{1}{2},\label{4.07}\\
&&\left|1-\frac{1}{3\xi^{2}\xi_{k+1}^{2}}\right|<\frac{1}{2},\label{4.08}
\end{eqnarray}
respectively.

\noindent
When (\ref{4.07})  is valid,    we have
\begin{eqnarray}
&&K(\xi_{1},\xi_{2},\cdot\cdot\cdot,\xi_{k},\xi,\tau_{1},\tau_{2},
\cdot\cdot\cdot,\tau_{k},\tau)\leq C
\frac{|\xi_{1}||\xi_{2}|^{\frac{1}{4}+\epsilon}|\xi|^{1-4\epsilon}}
{\langle\sigma\rangle^{-b_{0}}
\prod\limits_{j=1}^{k+1}\langle\sigma_{j}\rangle^{b_{1}}}
.\label{4.09}
\end{eqnarray}
By using  a proof similar to case (3.8)  of \cite{YY} and (\ref{2.08}),
we have
\begin{eqnarray*}
&&I_{1}\leq
C\left(\prod_{j=1}^{k+1}\|f_{j}\|_{L_{\xi\tau}^{2}}\right)\|g\|_{L_{\xi\tau}^{2}}.
\end{eqnarray*}
When (\ref{4.08})  is valid,  we have that  $|\xi|\sim|\xi_{k+1}|^{-1}$, thus,  we have
\begin{eqnarray}
&&K(\xi_{1},\xi_{2},\cdot\cdot\cdot,\xi_{k},\xi,\tau_{1},\tau_{2},\cdot\cdot\cdot,\tau_{k},\tau)\leq C
\frac{|\xi_{1}||\xi_{2}|^{\frac{1}{4}+\epsilon}|\xi_{k+1}|^{\frac{1}{4}}|\xi|^{1-2\epsilon}}
{\langle\sigma\rangle^{-b_{0}}\prod\limits_{j=1}^{k+1}\langle\sigma_{j}\rangle^{b_{1}}}
.\label{4.010}
\end{eqnarray}
By using  a proof similar to case (3.8)  of \cite{YY},
we have
\begin{eqnarray*}
&&I_{1}\leq  C\|F\|_{X_{0,\frac{1}{2}-\frac{\epsilon}{12}}}\prod\limits_{j=1}^{k+1}\|F_{j}\|_{X_{0,b_{1}}}\leq
C\left(\prod_{j=1}^{k+1}\|f_{j}\|_{L_{\xi\tau}^{2}}\right)\|g\|_{L_{\xi\tau}^{2}}.
\end{eqnarray*}
{\bf Case(3)}: $(\xi_{1},\xi_{2},\cdot\cdot\cdot,\xi_{k},\xi,\tau_{1},\tau_{2},
\cdot\cdot\cdot,\tau_{k},\tau)\in \Omega_{2}$,  we consider
\begin{eqnarray}
&&\left|1-\frac{1}{3\xi_{2}^{2}\xi_{3}^{2}}\right|\geq\frac{1}{2},\label{4.011}\\
&&\left|1-\frac{1}{3\xi_{2}^{2}\xi_{3}^{2}}\right|<\frac{1}{2}.\label{4.012}
\end{eqnarray}
When (\ref{4.011}) is valid,  since $s\geq\frac{1}{2}-\frac{2}{k+1}+2\epsilon$,  we have
\begin{align}
&K(\xi_{1},\xi_{2},\cdot\cdot\cdot,\xi_{k},\xi,\tau_{1},\tau_{2},\cdot\cdot\cdot,\tau_{k},\tau)\leq
C\frac{|\xi_{1}|^{\frac{2}{k+1}+\epsilon}|\xi_{2}^{2}-\xi_{3}^{2}|^{\frac{1}{2}}}
{\langle\sigma\rangle^{-b_{0}}\left(\prod\limits_{j=2}^{k+1}
\langle\xi_{j}\rangle^{s}\right)\left(\prod\limits_{j=1}^{k+1}\langle\sigma_{j}\rangle^{b_{1}}\right)}
\notag\\
&\leq C\frac{|\xi_{2}^{2}-\xi_{3}^{2}|^{\frac{1}{2}}
|\xi_{1}|^{\frac{1}{6}}\prod\limits_{j=4}^{k+1}
\langle\xi_{j}\rangle^{-\frac{ks+\frac{1}{6}-\frac{2}{k+1}-\epsilon}{k-2}}}
{\langle\sigma\rangle^{\frac{1}{2}-\frac{\epsilon}{12}}
\prod\limits_{j=1}^{k+1}\langle\sigma_{j}\rangle^{b}}\nonumber\\
&\leq C\frac{|\phi^{\prime}(\xi_{2})-\phi^{\prime}(\xi_{3})|^{\frac{1}{2}}
|\xi_{1}|^{\frac{1}{6}}\prod\limits_{j=4}^{k+1}
\langle\xi_{j}\rangle^{-\frac{ks+\frac{1}{6}-\frac{2}{k+1}-\epsilon}{k-2}}}{\langle\sigma\rangle^{-b_{0}}
\prod\limits_{j=1}^{k+1}\langle\sigma_{j}\rangle^{b_{1}}}.\label{4.013}
\end{align}
By using (\ref{4.013}) and a proof similar to Case (3) of \cite{YY} and Lemma \ref{Lemma2.2},  we have
\begin{eqnarray*}
&&I_{1}\leq  C\left(\prod_{j=1}^{k+1}\|f_{j}\|_{L_{\xi\tau}^{2}}\right)\|g\|_{L_{\xi\tau}^{2}}.
\end{eqnarray*}
When (\ref{4.012}) is valid,  we have $|\xi_{2}|\sim|\xi_{3}|^{-1}$,
we consider $|\xi|\leq a,|\xi|\geq a$, respectively.

\noindent When $|\xi|\leq a,$  we have
\begin{eqnarray}
&&K(\xi_{1},\xi_{2},\cdot\cdot\cdot,\xi_{k},\xi,\tau_{1},\tau_{2},\cdot\cdot\cdot,\tau_{k},\tau)\leq C
\frac{\prod\limits_{j=1}^{k} \langle \xi_{j}\rangle^{-s}}
{\langle\sigma\rangle^{\frac{1}{2}-\frac{\epsilon}{12}}\prod\limits_{j=1}^{k+1}\langle\sigma_{j}\rangle^{b_{1}}}
.\label{4.014}
\end{eqnarray}
By using  (\ref{4.014}) and a proof similar to (3.15) of \cite{YY},
we have
\begin{eqnarray*}
&&I_{1}\leq C\|F\|_{X_{0,\frac{1}{2}-\frac{\epsilon}{12}}}\prod\limits_{j=1}^{k+1}\|F_{j}\|_{X_{0,b_{1}}}\leq
C\left(\prod_{j=1}^{k+1}\|f_{j}\|_{L_{\xi\tau}^{2}}\right)\|g\|_{L_{\xi\tau}^{2}}.
\end{eqnarray*}
When $|\xi|\geq a,$  we consider
(\ref{4.07}), (\ref{4.08}), respectively.

\noindent When (\ref{4.07}) is valid,  we have
\begin{eqnarray}
&&K(\xi_{1},\xi_{2},\cdot\cdot\cdot,\xi_{k},\xi,\tau_{1},\tau_{2},\cdot\cdot\cdot,\tau_{k},\tau)\leq C
\frac{|\xi_{2}||\xi_{1}|^{-\frac{1}{2}-\epsilon}|\xi_{3}|^{\frac{1}{4}+\epsilon}|\xi|^{1-4\epsilon}}
{\langle\sigma\rangle^{\frac{1}{2}-\frac{\epsilon}{12}}\prod\limits_{j=1}^{k+1}\langle\sigma_{j}\rangle^{b_{1}}}
.\label{4.015}
\end{eqnarray}
By using  (\ref{4.015}) and a proof similar to (3.16) of \cite{YY}, we have
\begin{eqnarray*}
&&I_{1}\leq
C\left(\prod_{j=1}^{k+1}\|f_{j}\|_{L_{\xi\tau}^{2}}\right)\|g\|_{L_{\xi\tau}^{2}}.
\end{eqnarray*}
When (\ref{4.08})  is valid,  we  have  $|\xi|\sim|\xi_{k+1}|^{-1}$. Since $k\geq 6$, and $s>\frac{1}{2}-\frac{2}{k+1}+88\epsilon$, it follows that
\begin{eqnarray}
&&K(\xi_{1},\xi_{2},\cdot\cdot\cdot,\xi_{k},\xi,\tau_{1},\tau_{2},\cdot\cdot\cdot,\tau_{k},\tau)\nonumber\\
&& =
\frac{|\xi_{1}|^{\frac{1}{6}}|\xi_{3}|^{\frac{1}{4}+\epsilon}|\xi_{2}||\xi|^{\frac{2-\epsilon}{12}} |\xi_{1}|^{-\frac{1}{6}}|\xi_{3}|^{-(\frac{1}{4}+\epsilon)}|\xi_{2}|^{-1}|\xi|^{-\frac{2-\epsilon}{12}}|\xi|^{\frac{3}{2}+\epsilon}}
{\langle\sigma\rangle^{\frac{1}{2}-\frac{\epsilon}{12}}\prod\limits_{j=1}^{k+1}\langle\xi_{j}\rangle^{s}\langle\sigma_{j}\rangle^{b_{1}}}
\nonumber\\
&&\leq C\frac{|\xi_{1}|^{\frac{1}{6}}|\xi_{3}|^{\frac{1}{4}+\epsilon}|\xi_{2}||\xi|^{\frac{2-\epsilon}{12}}|\xi_{1}|^{\frac{5+25\epsilon}{12}-2s}}
{\langle\sigma\rangle^{\frac{1}{2}-\frac{\epsilon}{12}}\left(\prod\limits_{j=1}^{k+1}\langle\sigma_{j}
\rangle^{b_{1}}\right)}\nonumber\\
&&\leq C\frac{|\xi_{1}|^{\frac{1}{6}}|\xi_{3}|^{\frac{1}{4}+\epsilon}|\xi_{2}||\xi|^{\frac{2-\epsilon}{12}}}
{\langle\sigma\rangle^{\frac{1}{2}-\frac{\epsilon}{12}}\left(\prod\limits_{j=1}^{k+1}\langle\sigma_{j}
\rangle^{b_{1}}\right)}.\label{4.016}
\end{eqnarray}
By using  (\ref{4.016}), H\"{o}lder inequality, and \eqref{2.05}, \eqref{2.09}-\eqref{2.010}, together with \eqref{2.012}, we have
\begin{eqnarray*}
&&I_{1}\leq C\int_{\xi=\sum\limits_{j=1}^{k+1}\xi_{j}}
\int_{\tau=\sum\limits_{j=1}^{k+1}\tau_{j}}
\frac{g\left(\prod\limits_{j=1}^{k+1}f_{j}\right)|\xi_{1}|^{\frac{1}{6}}|\xi_{3}|^{\frac{1}{4}+\epsilon}|\xi_{2}||\xi|^{\frac{2-\epsilon}{12}}}
{\langle\sigma\rangle^{\frac{1}{2}-\frac{\epsilon}{12}}\left(\prod\limits_{j=1}^{k+1}\langle\sigma_{j}
\rangle^{b_{1}}\right)}d\delta\nonumber\\
&&\leq C\|D_{x}^{\frac{1}{6}}F_{1}\|_{L_{xt}^{6}}\|D_{x}^{\frac{2-\epsilon}{12}}F\|_{L_{xt}^{\frac{6}{1+\epsilon}}}\|D_{x}F_{2}\|_{L_{x}^{\infty}L_{t}^{2}}
\|D_{x}^{\frac{1}{4}+\epsilon}F_{3}\|_{L_{x}^{2}L_{t}^{\infty}}
\left(\prod_{j=4}^{k+1}\|F_{j}
\|_{L_{xt}^{\frac{6(k-2)}{1-\epsilon}}}\right)
\nonumber\\
&&\leq C\|F\|_{X_{0,\frac{1}{2}-\frac{\epsilon}{12}}}
\prod\limits_{j=1}^{k+1}\|F_{j}\|_{X_{0,b_{1}}}\leq
C\left(\prod_{j=1}^{k+1}\|f_{j}\|_{L_{\xi\tau}^{2}}\right)
\|g\|_{L_{\xi\tau}^{2}}.
\end{eqnarray*}
{\bf Case(4)}: $(\xi_{1},\xi_{2},\cdot\cdot\cdot,\xi_{k},\xi,\tau_{1},\tau_{2},
\cdot\cdot\cdot,\tau_{k},\tau)\in \Omega_{3}$, we consider
\begin{eqnarray}
&&\left|1-\frac{1}{3\xi_{3}^{2}\xi_{4}^{2}}\right|\geq\frac{1}{2},\label{4.017}\\
&&\left|1-\frac{1}{3\xi_{3}^{2}\xi_{4}^{2}}\right|<\frac{1}{2},\label{4.018}
\end{eqnarray}
respectively.

\noindent When  (\ref{4.017}) is valid, we have
\begin{eqnarray}
K(\xi_{1},\xi_{2},\cdot\cdot\cdot,\xi_{k},\xi,\tau_{1},\tau_{2},\cdot\cdot\cdot,\tau_{k},\tau)\leq C
\frac{|\xi_{1}|^{\frac{2}{k+1}+\epsilon}|\xi_{3}^{2}-\xi_{4}^{2}|^{\frac{1}{2}}}
{\langle\sigma\rangle^{-b_{0}}\left(\prod\limits_{j=2}^{k+1}
\langle\xi_{j}\rangle^{s}\right)\left(\prod\limits_{j=1}^{k+1}\langle\sigma_{j}\rangle^{b_{1}}\right)}\nonumber\\
\leq C
\frac{|\phi^{\prime}(\xi_{3})-\phi^{\prime}(\xi_{4})|^{\frac{1}{2}}\prod\limits_{j=5}^{k+1}
\langle\xi_{j}\rangle^{-\frac{ks-\frac{2}{k+1}-\epsilon}{k-3}}
}
{\langle\sigma\rangle^{-b_{0}}\langle\sigma_{j}\rangle^{b_{1}}}
.\label{4.019}
\end{eqnarray}
 By using (\ref{4.019}) and Case (4) of \cite{YYDH} and (\ref{2.05}), (\ref{2.07})-(\ref{2.08}) as well as Lemma \ref{Lemma2.2},
we have
\begin{eqnarray*}
&&I_{1}\leq C\left(\prod_{j=1}^{k+1}
\|f_{j}\|_{L_{\xi\tau}^{2}}\right)\|g\|_{L_{\xi\tau}^{2}}.
\end{eqnarray*}
When (\ref{4.018})  is valid,  we have $|\xi_{3}|\sim|\xi_{4}|^{-1}$.

\noindent We consider $|\xi|\leq a,|\xi|\geq a,$ respectively.

\noindent When $|\xi|\leq a,$  we have
\begin{eqnarray}
&&K(\xi_{1},\xi_{2},\cdot\cdot\cdot,\xi_{k},\xi,\tau_{1},\tau_{2},\cdot\cdot\cdot,\tau_{k},\tau)\leq C
\frac{\prod\limits_{j=1}^{k} \langle \xi_{j}\rangle^{-s}}
{\langle\sigma\rangle^{-b_{0}}\prod\limits_{j=1}^{k+1}\langle\sigma_{j}\rangle^{b_{1}}}
.\label{4.020}
\end{eqnarray}
By using  (\ref{4.020}) and a proof similar to (3.21)  of \cite{YY},
we have
\begin{eqnarray*}
&&I_{1}\leq C\|F\|_{X_{0,-b_{0}}}\prod\limits_{j=1}^{k+1}\|F_{j}\|_{X_{0,b_{1}}}\leq
C\left(\prod_{j=1}^{k+1}\|f_{j}\|_{L_{\xi\tau}^{2}}\right)\|g\|_{L_{\xi\tau}^{2}}.
\end{eqnarray*}
When $|\xi|\geq a,$ by using a proof similar to Case $|\xi|\geq a$ of Case (4) of \cite{YY}, we have

\begin{eqnarray*}
&&I_{1}\leq C\|F\|_{X_{0,\frac{1}{2}-\frac{\epsilon}{12}}}\prod\limits_{j=1}^{k+1}\|F_{j}\|_{X_{0,b_{1}}}\leq
C\left(\prod_{j=1}^{k+1}\|f_{j}\|_{L_{\xi\tau}^{2}}\right)\|g\|_{L_{\xi\tau}^{2}}.
\end{eqnarray*}
{\bf Case(5)}: $(\xi_{1},\xi_{2},\cdot\cdot\cdot,\xi_{k},\xi,\tau_{1},\tau_{2},
\cdot\cdot\cdot,\tau_{k},\tau)\in \Omega_{4}$,
this case can be proved similarly to Case (4).\\
\noindent
{\bf Case(6)}: $(\xi_{1},\xi_{2},\cdot\cdot\cdot,\xi_{k},\xi,\tau_{1},\tau_{2},
\cdot\cdot\cdot,\tau_{k},\tau)\in \Omega_{5}$,
 we consider
$|\xi|\leq a,|\xi|\geq a$, respectively.

\noindent When $|\xi|\geq a$.
Obviously,  since  $s\geq\frac{1}{2}-\frac{2}{k+1}+88\epsilon(k\geq 6),$ we have
\begin{eqnarray}
K(\xi_{1},\cdot\cdot\cdot,\xi_{k},\xi,\tau_{1},\cdot\cdot\cdot,\tau_{k},\tau)\leq C
\frac{\langle\xi\rangle^{\frac{2-\epsilon}{12}}\left(\prod\limits_{j=1}^{5}\langle\xi_{j}
\rangle^{\frac{1}{6}-\epsilon}\right)\left(\prod\limits_{j=6}^{k+1}\langle\xi_{j}\rangle^{\frac{61\epsilon}{12(k-4)}-\frac{(k+1)s-\frac{1}{2}-\epsilon}{k-4}}
\right)}
{\langle\sigma\rangle^{-b_{0}}\prod\limits_{j=1}^{k+1}\langle\sigma_{j}\rangle^{b_{1}}}.\label{4.021}
\end{eqnarray}
By using \eqref{4.021}, the H\"older inequality and (\ref{2.05}),  (\ref{2.012})-(\ref{2.013}), we have
\begin{eqnarray*}
&&I_{1}\leq C\|D_{x}^{\frac{2-\epsilon}{12}}F\|_{L_{xt}^{\frac{6}{1+\epsilon}}}
\left(\prod\limits_{j=1}^{5}\|D_{x}^{\frac{1}{6}-\epsilon}F_{j}\|_{L_{xt}^{\frac{12}{2-3\epsilon}}}\right)
\prod\limits_{j=6}^{k+1}\left\|D_{x}^{\frac{61\epsilon}{12(k-4)}-\frac{(k+1)s-\frac{1}{2}-\epsilon}
{k-4}}F_{j}\right\|_{L_{xt}^{\frac{12(k-4)}{13\epsilon}}}
\nonumber\\
&&\leq C\left(\prod_{j=1}^{k+1}\|F_{j}\|_{X_{0,b_{1}}}\right)
\|F\|_{X_{0,-b_{0}}}\leq
C\left(\prod_{j=1}^{k+1}\|f_{j}\|_{L_{\xi\tau}^{2}}\right)
\|g\|_{L_{\xi\tau}^{2}}.
\end{eqnarray*}
When $|\xi|\leq a$.
Obviously,  since  $s\geq\frac{1}{2}-\frac{2}{k+1}+88\epsilon(k\geq6),$ we have
\begin{eqnarray}
K(\xi_{1},\cdot\cdot\cdot,\xi_{k},\xi,\tau_{1},\cdot\cdot\cdot,\tau_{k},\tau)\leq C
\frac{1}
{\langle\sigma\rangle^{-b_{0}}\prod\limits_{j=1}^{k+1}\langle\sigma_{j}\rangle^{b_{1}}}.\label{4.022}
\end{eqnarray}
By using \eqref{4.022}, the H\"older inequality and (\ref{2.05}), we have
\begin{eqnarray*}
&&I_{1}\leq C\|F\|_{L_{xt}^{2}}
\prod\limits_{j=1}^{k+1}\left\|J_{x}^{-s}F_{j}\right\|_{L_{xt}^{2(k+1)}}
\leq C\left(\prod_{j=1}^{k+1}\|F_{j}\|_{X_{0,b_{1}}}\right)
\|F\|_{X_{0,-b_{0}}}\nonumber\\
&&\leq
C\left(\prod_{j=1}^{k+1}\|f_{j}\|_{L_{\xi\tau}^{2}}\right)
\|g\|_{L_{\xi\tau}^{2}}.
\end{eqnarray*}
{\bf Case(7)}: $(\xi_{1},\xi_{2},\cdot\cdot\cdot,\xi_{k},\xi,\tau_{1},
\tau_{2},\cdot\cdot\cdot,\tau_{k},\tau)\in \Omega_{6}$. we consider
$|\xi|\leq a,|\xi|\geq a$, respectively.

\noindent When $|\xi|\geq a$.
Obviously,  since  $s\geq\frac{1}{2}-\frac{2}{k+1}+88\epsilon(k\geq 6),$ we have
\begin{eqnarray}
K(\xi_{1},\cdot\cdot\cdot,\xi_{k},\xi,\tau_{1},\cdot\cdot\cdot,\tau_{k},\tau)\leq C
\frac{\langle\xi\rangle^{\frac{2-\epsilon}{12}}\left(\prod\limits_{j=1}^{5}\langle\xi_{j}
\rangle^{\frac{1}{6}-\epsilon}\right)\left(\prod\limits_{j=6}^{k+1}\langle\xi_{j}\rangle^{\frac{61\epsilon}{12(k-4)}-\frac{(k+1)s-\frac{1}{2}-\epsilon}{k-4}}
\right)}
{\langle\sigma\rangle^{-b_{0}}\prod\limits_{j=1}^{k+1}\langle\sigma_{j}\rangle^{b_{1}}}.\label{4.023}
\end{eqnarray}
By using \eqref{4.023}, the H\"older inequality, and (\ref{2.05}),  (\ref{2.012})-(\ref{2.013}), we have
\begin{eqnarray*}
&&I_{1}\leq C\|D_{x}^{\frac{2-\epsilon}{12}}F\|_{L_{xt}^{\frac{6}{1+\epsilon}}}
\left(\prod\limits_{j=1}^{5}\|D_{x}^{\frac{1}{6}-\epsilon}F_{j}\|_{L_{xt}^{\frac{12}{2-3\epsilon}}}\right)
\prod\limits_{j=6}^{k+1}\left\|D_{x}^{\frac{61\epsilon}{12(k-4)}-\frac{(k+1)s-\frac{1}{2}-\epsilon}
{k-4}}F_{j}\right\|_{L_{xt}^{\frac{12(k-4)}{13\epsilon}}}
\nonumber\\
&&\leq C\left(\prod_{j=1}^{k+1}\|F_{j}\|_{X_{0,b_{1}}}\right)
\|F\|_{X_{0,-b_{0}}}\leq
C\left(\prod_{j=1}^{k+1}\|f_{j}\|_{L_{\xi\tau}^{2}}\right)
\|g\|_{L_{\xi\tau}^{2}}.
\end{eqnarray*}
When $|\xi|\leq a$.
Obviously,  since  $s\geq\frac{1}{2}-\frac{2}{k+1}+88\epsilon(k\geq6),$ we have
\begin{eqnarray}
K(\xi_{1},\cdot\cdot\cdot,\xi_{k},\xi,\tau_{1},\cdot\cdot\cdot,\tau_{k},\tau)\leq C
\frac{1}
{\langle\sigma\rangle^{-b_{0}}\prod\limits_{j=1}^{k+1}\langle\sigma_{j}\rangle^{b_{1}}}.\label{4.024}
\end{eqnarray}
By using \eqref{4.024}, the H\"older inequality, and (\ref{2.05}), we have
\begin{eqnarray*}
&&I_{1}\leq C\|F\|_{L_{xt}^{2}}
\prod\limits_{j=1}^{k+1}\left\|J_{x}^{-s}F_{j}\right\|_{L_{xt}^{2(k+1)}}
\leq C\left(\prod_{j=1}^{k+1}\|F_{j}\|_{X_{0,b_{1}}}\right)
\|F\|_{X_{0,-b_{0}}}\nonumber\\
&&\leq
C\left(\prod_{j=1}^{k+1}\|f_{j}\|_{L_{\xi\tau}^{2}}\right)
\|g\|_{L_{\xi\tau}^{2}}.
\end{eqnarray*}

\noindent{\bf Case(8)}: $(\xi_{1},\xi_{2},\cdot\cdot\cdot,\xi_{k},\xi,\tau_{1},
\tau_{2},\cdot\cdot\cdot,\tau_{k},\tau)\in \Omega_{7}$, we consider
$|\xi|\leq a,|\xi|\geq a$, respectively.

This case can be proved similarly to Case (8) of \cite {YY}.

We completed the proof of Lemma 4.1.

\begin{Remark}
In Lemma \ref{Lemma4.1}, the condition $k\geq6$ is required by Case (3); see \eqref{4.016} for details.
\end{Remark}

\bigskip

\section{ Proof of Theorem 1.2}
\setcounter{equation}{0}
\setcounter{Theorem}{0}

\setcounter{Lemma}{0}

\setcounter{section}{5}
In this section, we use high-low frequency technique to prove Theorem 1.2.

We firstly give the maximal function estimate related to Ostrovsky  equation.
\begin{Lemma}\label{Lemma5.1}
For $f\in H^{\frac{1}{4}}(\R)$, we have
\begin{eqnarray}
&&\left\|U(t)P^{8}f\right\|_{L_{x}^{4}L_{t}^{\infty}}\leq C
\left\|f\right\|_{H^{\frac{1}{4}}(\SR)}\label{5.01}.
\end{eqnarray}
\end{Lemma}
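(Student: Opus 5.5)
The plan is to reduce \eqref{5.01} to the dyadic maximal estimate \eqref{2.022} of Lemma \ref{Lemma2.5} with $\gamma=4$, and then sum the dyadic pieces. I will decompose $P^{8}f$ into Littlewood--Paley pieces $f_{N}$, $N=2^{j}\geq 8$ (or $N\ge 2$, after adjusting the annuli to $[N,4N]$ with bounded overlap), with $\supp\mathscr{F}_{x}f_{N}\subseteq[N,4N]\cup[-4N,-N]$. For these, \eqref{2.022} gives
\begin{eqnarray*}
\left\|U(t)f_{N}\right\|_{L_{x}^{4}L_{t}^{\infty}}\leq CN^{\frac{1}{4}}\left\|f_{N}\right\|_{L_{x}^{2}}\sim C\left\|f_{N}\right\|_{H^{\frac14}}.
\end{eqnarray*}
Near $\xi=0$ the phase $\phi(\xi)=\xi^{3}-\xi^{-1}$ is singular, but here $|\xi|\geq 8$. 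So $\phi''(\xi)=6\xi-2\xi^{-3}\sim\xi$ and $\phi'(\xi)\sim\xi^{2}$, and the cited dyadic estimate is legitimate on these annuli.

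Summing by the triangle inequality in $L_{x}^{4}L_{t}^{\infty}$ would lose a logarithm (an $\ell^{1}$ sum of $N^{1/4}\|f_N\|_{L^2}$). I will therefore avoid that and prove the estimate directly in the Kenig--Ponce--Vega way, by a $TT^{*}$ argument. By the standard linearization $t=t(x)$ and duality, \eqref{5.01} is equivalent to
\begin{eqnarray*}
\left\|\int_{\SR} U(t-t')D_{x}^{-\frac12}P^{8}g(\cdot,t')\,dt'\right\|_{L_{x}^{4}L_{t}^{\infty}}\leq C\|g\|_{L_{x}^{4/3}L_{t}^{1}}.
\end{eqnarray*}
This reduces matters to a kernel bound
\begin{eqnarray*}
\sup_{t}\left|\int_{|\xi|\geq 8}e^{ix\xi+it\phi(\xi)}|\xi|^{-\frac12}\chi(\xi)\,d\xi\right|\leq C|x|^{-\frac12},
\end{eqnarray*}
with $\chi$ a smooth cutoff. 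Given this, Hardy--Littlewood--Sobolev in $x$ ($|x|^{-1/2}$ maps $L^{4/3}$ to $L^{4}$) finishes the proof.

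The kernel bound is the main obstacle. I will prove it by splitting into a region where $|\xi|\lesssim|x|^{-1/2}$ relative to the stationary point, a region near the stationary point $\phi'(\xi_0)=-x/t$, and a region away from it. Near the stationary point, van der Corput with $|\phi''|\sim|\xi|$ gives a factor $(|t||\xi_{0}|)^{-1/2}|\xi_{0}|^{-1/2}$, and $|\xi_{0}|^{2}\sim|x|/|t|$ turns this into $C|x|^{-1/2}$. Away from it, I will integrate by parts using $|x+t\phi'(\xi)|\gtrsim|x|$ or $\gtrsim|t|\xi^{2}$, keeping track of the monotonicity of $|\xi|^{-1/2}\chi$. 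Since $|\xi|\geq8$, the term $\xi^{-1}$ in $\phi$ is a harmless perturbation: $\phi'(\xi)=3\xi^{2}+\xi^{-2}\sim\xi^{2}$ and $\phi''\sim\xi$ uniformly. Hence the kernel estimate for KdV from \cite{KPV1991} carries over verbatim with constants independent of $t$.

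For comparison, the alternative via \eqref{2.024} with $\gamma=4$ only yields $s=\tfrac14+\epsilon$. That is why the endpoint requires the $TT^{*}$ kernel argument above rather than the dyadic lemmas.
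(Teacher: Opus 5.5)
Your argument is correct, and it does more than the paper: the paper gives no proof of \eqref{5.01} and only refers to (2.2) of \cite{GHO}.

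\textbf{Comparison with the paper.} What you supply is the standard Kenig--Ponce--Vega endpoint argument. You linearize $t=t(x)$, apply $TT^{*}$, prove a kernel bound $|K(x,t)|\le C|x|^{-1/2}$ uniformly in $t$, and finish with Hardy--Littlewood--Sobolev $L^{4/3}\to L^{4}$ in $x$. You adapt this to the observation that on $|\xi|\ge 8$ one has $\phi'\sim\xi^{2}$ and $\phi''\sim\xi$, with $\phi''$ of fixed sign on each half-line. This makes the estimate self-contained and shows why the projection $P^{8}$ is needed: near the origin $\phi''$ vanishes at $|\xi|=3^{-1/4}$ and blows up like $\xi^{-3}$. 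The citation buys only brevity. You are also right that the dyadic route through \eqref{2.022} or \eqref{2.024} gives only $H^{1/4+\epsilon}$ (or a Besov space $B^{1/4}_{2,1}$). So the $TT^{*}$ argument is what reaches the endpoint $s=\frac14$.

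\textbf{Two details in the kernel estimate need fixing.}

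First, the trivially estimated low-frequency region must be $8\le|\xi|\lesssim|x|^{-1}$, not $|\xi|\lesssim|x|^{-1/2}$. With the cut at $|x|^{-1/2}$ and $|x|<1$, the non-stationary part of the complement only gets the first-derivative van der Corput bound $\sup|\psi|/|x|\lesssim|x|^{1/4}/|x|=|x|^{-3/4}$, which is too large. With the cut at $|x|^{-1}$, the trivial part is $\lesssim|x|^{-1/2}$ and the remainder gives $|x|^{1/2}/|x|=|x|^{-1/2}$.

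Second, the KdV kernel proof does not carry over verbatim, since it reduces to $|x|=1$ by scaling, and $\xi^{3}-\xi^{-1}$ is not homogeneous. Your direct case analysis does work, however. The derivative $\Phi'(\xi)=x+t\phi'(\xi)$ is monotone on each half-line $\pm\xi\ge 8$. So you may apply the second-derivative van der Corput lemma on $[\xi_{0}/2,2\xi_{0}]\cap\{|\xi|\ge 8\}$, where $|\Phi''|\gtrsim|t||\xi_{0}|$. Off that interval, apply the first-derivative lemma with $|\Phi'|\gtrsim|x|$. This also covers a would-be stationary point lying just below $8$, by using $[8,16]$ as the near region.

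With these corrections the bound is uniform in $t\in\R$. You therefore obtain the estimate globally in time, which covers the $\sup_{0<t<1}$ version used in Section 5.
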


For the proof of Lemma 5.1, we refer the readers to (2.2)  of \cite{GHO}.

 {\noindent} {\bf Proof of Theorem 1.2.} When $f$ is a rapidly decreasing function,
 from Theorem 1.1 of \cite{YZDY}, we know that
Theorem 1.1 is valid. When $f\in H^{\frac{1}{4}}(\R)$, $\forall \epsilon>0,$
by using the density Theorem \cite{Du}, there exists a rapidly decreasing function $g$
 and $h\in H^{s}(\R)$ with
$\|h\|_{H^{s}}<\epsilon$ such that $f=g+h$. For $x\in \R,$ there exists a $y_{0}\in \R$
 such that $x\in [y_{0}-\frac{1}{2}, y_{0}+\frac{1}{2}]:=B_{1}.$  By using the triangle
  inequality, we have
\begin{eqnarray}
&&\lim\limits_{t\longrightarrow0}\left|U(t)f-f\right|\leq \lim\limits_{t\longrightarrow0}
\left|U(t)g-g\right|
+\lim\limits_{t\longrightarrow0}\left|U(t)h-h\right|\leq
\lim\limits_{t\longrightarrow0}\left|U(t)h\right|+|h|\nonumber\\
&&\leq \lim\limits_{t\longrightarrow0}\left|U(t)P^{8}h\right|+\lim\limits_{t\longrightarrow0}
\left|U(t)P_{8}h\right|+|h|
.\label{5.02}
\end{eqnarray}
For any $\alpha>0$ (fixed), we  define
\begin{eqnarray}
&&{\rm E_{\alpha}}=\left\{x\in B_{1}: \lim\limits_{t\longrightarrow0}
\left|U(t)f-f\right|>\alpha\right\}.\label{5.03}
\end{eqnarray}
Combining (\ref{5.02}) with (\ref{5.03}), by using Lemma \ref{Lemma5.1}, (\ref{2.01})  and
(\ref{2.026}) as well as $H^{s}(\R)\hookrightarrow L^{4}(\R)(s\geq\frac{1}{4}),$ we have
\begin{eqnarray}
&&\left|{\rm E_{\alpha}}\right|\leq \left|\left\{x\in B_{1}: \sup\limits_{0<t<1}
\left|P^{8}h\right|>\frac{\alpha}{3}\right\}\right|
\nonumber\\&&\qquad\qquad +
\left|\left\{x\in B_{1}: \sup\limits_{0<t<1}\left|P_{8}\Psi(t)h\right|
>\frac{\alpha}{3}\right\}\right|+\left|\left\{x\in B_{1}:\left|h\right|>
\frac{\alpha}{3}\right\}\right|\nonumber\\
&&\qquad\leq \int_{\left\{x\in B_{1}: \sup\limits_{0<t<1}
\left|P^{8}h\right|>\frac{\alpha}{3}\right\}}\left|\frac{ \sup\limits_{0<t<1}
\left|P^{8}h\right|}{\frac{\alpha}{3}}\right|^{4}dx
+\int_{\left\{x\in B_{1}: \sup\limits_{0<t<1}\left|P_{8}\Psi(t)h\right|
>\frac{\alpha}{3}\right\}}\left|\frac{ \sup\limits_{0<t<1}
\left|P_{8}\Psi(t)h\right|}{\frac{\alpha}{3}}\right|dx\nonumber\\
&&\qquad\qquad+C\|h\|_{L_{x}^{4}}\nonumber\\
&&\leq C\left\|U(t)P^{8}f\right\|_{L_{x}^{4}L_{t}^{\infty}}+C\left|B_{1}\right|
\left\|U(t)P_{8}\Psi(t)f\right\|_{L_{xt}^{\infty}}+\|h\|_{H^{s}}\nonumber\\
&&\leq C\|h\|_{H^{s}}+C\|\Psi(t)U(t)h\|_{X_{0,b}}+C\|h\|_{H^{s}}\nonumber\\
&&\leq C\|h\|_{H^{s}}+C\|h\|_{L^{2}}\leq C\|h\|_{H^{s}}\leq \epsilon.
\end{eqnarray}
Since $\alpha>0$ is arbitrary, we have that $\left|{\rm E_{\alpha}}\right|=0.$

This completes the proof of Theorem  1.2.$\hfill\Box$

\bigskip

\section{Proof of Theorem 1.3}
\setcounter{equation}{0}
\setcounter{Theorem}{0}

\setcounter{Lemma}{0}

\setcounter{section}{6}
In this section, we establish the decomposition  of  the solution to
(\ref{1.01})-(\ref{1.02}) which is just Lemma 6.1 and show
 $\lim\limits_{t\longrightarrow0}\Psi(t)U(t)f_{N}=f_{N}$,
which is just Lemma 6.2.

To prove Theorem 1.3, we firstly prove Lemma 6.1.
\begin{Lemma}\label{Lemma6.1}(The decomposition  of  the solution to (\ref{1.01})-(\ref{1.02})).
Let $k\geq5$, $f\in H^{s_{1}}(\R)(s_{1}>\max\{\frac{1}{2}-\frac{2}{k},\frac{1}{4}\})$ and  $u$ be the solution to
(\ref{1.01})-(\ref{1.02}). Then,  $\forall \epsilon_{1}>0$ and sufficiently large  $N>0,$  we have
\begin{eqnarray*}
u=\Psi(t)U(t)f-\frac{\Psi_{\delta}(t)}{k+1}\int_{0}^{t}U(t-\tau)\partial_{x}(u^{k+1})
d\tau =F_{1N}(x,t)+F_{2}^{N}(x,t),t\in[-\delta,\delta].
\end{eqnarray*}
Here
\begin{eqnarray*}
&&(i):\Psi(t)U(t)f_{N}\in C([-\delta,\delta];C^{\infty}(\R)),\\
&&(ii):F_{1N}(x,t)=\Psi(t)U(t)f_{N}-\frac{\Psi_{\delta}(t)}{k+1}
\int_{0}^{t}U(t-\tau)P_{N}\partial_{x}(u_{N}^{k+1})d\tau\in C([-\delta,\delta];C^{\infty}(\R)),\\
&&(iii):F_{2}^{N}(x,t)=\Psi(t)U(t)f^{N}\nonumber\\&&\qquad\qquad
-\frac{\Psi_{\delta}(t)}{k+1}\left[\int_{0}^{t}U(t-\tau)\partial_{x}(u^{k+1})d\tau
-\int_{0}^{t}U(t-\tau)P_{N}\partial_{x}(u_{N}^{k+1})d\tau\right],
\end{eqnarray*}
where
\begin{eqnarray*}
&&u_{N}=\frac{1}{\sqrt{2\pi}}\int_{|\xi|\leq N}e^{ix\xi}\mathscr{F}_{x}u(\xi,t)d\xi,\\
&&f^{N}=\frac{1}{\sqrt{2\pi}}\int_{|\xi|\geq N}e^{ix\xi}\mathscr{F}_{x}f(\xi)d\xi,\\
&&\|F_{2}^{N}\|_{L_{x}^{4}L_{t}^{\infty}}<C\|F_{2}^{N}\|_{X_{s_{1},b}}<C\epsilon_{1}.
\end{eqnarray*}
\end{Lemma}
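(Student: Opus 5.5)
We take as given the local well-posedness theory of \cite{YY}: for $f\in H^{s_{1}}(\R)$ with $s_{1}>\max\{\frac{1}{2}-\frac{2}{k},\frac14\}$ there are $\delta>0$ and a unique $u\in X_{s_{1},b}$, $b=\frac12+\frac{\epsilon}{24}$, with
\[
u=\Psi(t)U(t)f-\frac{\Psi_{\delta}(t)}{k+1}\int_{0}^{t}U(t-\tau)\partial_{x}(u^{k+1})\,d\tau,\qquad t\in[-\delta,\delta].
\]
This rests on (\ref{2.02}) together with the $(k+1)$-linear estimate $\|\partial_{x}\prod_{j}\Psi u_{j}\|_{X_{s_{1},b'}}\leq C\prod_{j}\|u_{j}\|_{X_{s_{1},b}}$ for some $b'>-\frac12$, proved in \cite{YY}. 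Write $f=f_{N}+f^{N}$ with $f_{N}=P_{N}f$, and insert $\pm\int_{0}^{t}U(t-\tau)P_{N}\partial_{x}(u_{N}^{k+1})\,d\tau$ into the Duhamel term. This gives the algebraic identity $u=F_{1N}+F_{2}^{N}$ with $F_{1N}$ and $F_{2}^{N}$ as in (ii) and (iii). Everything then reduces to two claims: the regularity statements (i) and (ii), and the smallness of $F_{2}^{N}$.

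\emph{Regularity (i), (ii).} Both $\Psi(t)U(t)f_{N}$ and the truncated Duhamel term have spatial Fourier support in $|\xi|\leq N$. For $\Psi(t)U(t)f_{N}$, each $\partial_{x}^{m}$ costs at most $N^{m}$, so $\Psi(t)U(t)f_{N}\in X_{\sigma,b}$ for every $\sigma$ by (\ref{2.01}). For the truncated Duhamel term, $u_{N}\in X_{s_{1},b}$, hence $P_{N}\partial_{x}(u_{N}^{k+1})\in X_{\sigma,b'}$ for every $\sigma$ (multilinear estimate plus frequency truncation), and (\ref{2.02}) places this term in $X_{\sigma,b}$ for all $\sigma$. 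Lemma \ref{Lemma3.6} and Lemma \ref{Lemma3.7} then give membership in $C([-\delta,\delta];H^{\sigma})$ for all $\sigma$, hence in $C([-\delta,\delta];C^{m})$ for every $m$. Alternatively one can bound $\partial_{x}^{m}$ directly, using $\|\mathscr{F}_{x}\cdot\|_{L^{1}_{\xi}(|\xi|\leq N)}\leq CN^{1/2}\|\cdot\|_{L^{2}}$ and continuity in $t$. Either way this yields (i) and (ii).

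\emph{Smallness of $F_{2}^{N}$.} First, the bound $\|F_{2}^{N}\|_{L_{x}^{4}L_{t}^{\infty}}\leq C\|F_{2}^{N}\|_{X_{s_{1},b}}$ follows for $s_{1}>\frac14$. We split into dyadic high frequencies, controlled by (\ref{2.024}) with $\gamma=4$, which needs $s_{1}>\frac14$ after absorbing the $\epsilon$ loss, and low frequencies, controlled by (\ref{2.026})/(\ref{2.027}) combined with Hölder over a bounded time interval (the factor $\Psi$ is present). Next, $\|\Psi U(t)f^{N}\|_{X_{s_{1},b}}\leq C\|f^{N}\|_{H^{s_{1}}}\to0$ as $N\to\infty$ by dominated convergence. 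For the nonlinear difference we write
\[
\partial_{x}(u^{k+1})-P_{N}\partial_{x}(u_{N}^{k+1})=P^{N}\partial_{x}(u^{k+1})+P_{N}\partial_{x}\big(u^{k+1}-u_{N}^{k+1}\big).
\]
We telescope $u^{k+1}-u_{N}^{k+1}=\sum_{j}u_{N}^{j}\,(u-u_{N})\,u^{k-j}$, where each summand contains one factor $u^{N}=P^{N}u$. By (\ref{2.02}) and the multilinear estimate, the telescoped part is bounded by $C\|P^{N}u\|_{X_{s_{1},b}}\|u\|_{X_{s_{1},b}}^{k}$. The first term is $\|P^{N}w\|_{X_{s_{1},b'}}$ for the fixed function $w=\partial_{x}(\Psi u)^{k+1}\in X_{s_{1},b'}$, which tends to $0$ by dominated convergence. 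Finally, $\|P^{N}u\|_{X_{s_{1},b}}\to0$ as $N\to\infty$, again by dominated convergence since $u\in X_{s_{1},b}$. Choosing $N$ large therefore gives $\|F_{2}^{N}\|_{X_{s_{1},b}}<C\epsilon_{1}$.

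The main obstacle is that $\Psi_{\delta}$ multiplies the Duhamel integral, so the nonlinearity must be written with $\Psi_{\delta}u$ inside. One has to check that the multilinear estimate of \cite{YY} holds at regularity $s_{1}$, not just at the well-posedness threshold, with factors of the form $P^{N}u$. It does, since $s_{1}>\frac12-\frac2k$ and the estimate is multilinear and symmetric.
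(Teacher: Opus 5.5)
Your argument follows the paper's proof. It uses the same splitting $u=F_{1N}+F_{2}^{N}$. Regularity of $F_{1N}$ comes from putting both pieces in $X_{\sigma,b}$ for every $\sigma$ via (\ref{2.01}), (\ref{2.02}), frequency truncation and the multilinear estimate of \cite{YY}, followed by Lemma \ref{Lemma3.6}. Smallness of $\|F_{2}^{N}\|_{X_{s_{1},b}}$ comes from (\ref{2.01}), (\ref{2.02}), the multilinear estimate and $\|f^{N}\|_{H^{s_{1}}},\ \|P^{N}u\|_{X_{s_{1},b}}\to0$.

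The only structural difference is how you split the nonlinear remainder. The paper uses $P^{N}\partial_{x}(u_{N}^{k+1})+\partial_{x}(u^{k+1}-u_{N}^{k+1})$ and simply asserts (\ref{6.04}). Since $u_{N}$ changes with $N$, that is not a dominated-convergence statement about a fixed function. The paper's (\ref{6.02}) also ends with a bound by $\|f^{N}\|_{H^{s_{1}}}$, which is false in general: low-frequency nonlinear interactions feed the high frequencies of $u$. Your split $P^{N}\partial_{x}(u^{k+1})+P_{N}\partial_{x}(u^{k+1}-u_{N}^{k+1})$ puts $P^{N}$ on the fixed function $\partial_{x}((\Psi u)^{k+1})\in X_{s_{1},b'}$ and uses only $\|P^{N}u\|_{X_{s_{1},b}}\to0$. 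In your version this step is rigorous as stated.

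The step that does not go through as written is the low-frequency part of $\|F_{2}^{N}\|_{L_{x}^{4}L_{t}^{\infty}}\leq C\|F_{2}^{N}\|_{X_{s_{1},b}}$; the paper never proves this inequality either. H\"older in $t$ gives no $L^{4}$-integrability in $x$ on $\R$, and (\ref{2.026}) is only an $L_{xt}^{\infty}$ bound. (\ref{2.027}), and any interpolation of it with (\ref{2.026}), controls only $D_{x}^{\alpha}P_{2}\Psi u$ with some $\alpha>0$. A positive power of $D_{x}$ cannot be removed near $\xi=0$.

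Argue dyadically instead. Let $Q_{j}$ be the projection onto $2^{j-1}<|\xi|\leq2^{j}$, $j\leq0$.
\begin{itemize}
\item Cauchy--Schwarz in $\xi$ gives $\|\Psi Q_{j}u\|_{L_{xt}^{\infty}}\leq C2^{j/2}\|Q_{j}u\|_{X_{0,b}}$.
\item Apply (\ref{2.010}), with its exponent taken to be some $\theta\in(\frac14,\frac12)$, to $D_{x}^{-\theta}Q_{j}u$. This gives $\|\Psi Q_{j}u\|_{L_{x}^{2}L_{t}^{\infty}}\leq C2^{-\theta j}\|Q_{j}u\|_{X_{0,b}}$.
\item H\"older in $x$ then yields $\|\Psi Q_{j}u\|_{L_{x}^{4}L_{t}^{\infty}}\leq C2^{j(\frac14-\frac{\theta}{2})}\|Q_{j}u\|_{X_{0,b}}$, which sums over $j\leq0$ to $C\|u\|_{X_{0,b}}$.
\end{itemize}
The high-frequency part via (\ref{2.024}) with $\gamma=4$ is fine as you have it, and it needs no dyadic splitting.
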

\noindent{\bf Proof.} We define $b=\frac{1}{2}+\frac{\epsilon}{24}$
 and $b^{\prime}=-\frac{1}{2}+\frac{\epsilon}{12}$. By using Lemma 3.1 of \cite{YY}, we have
\begin{eqnarray}
&&\left\|\partial_{x}(u_{N}^{k+1})\right\|_{X_{s_{1},b^{\prime}}}
\leq C\|u_{N}\|_{X_{s_{1},b}}^{k+1}\leq \left( 2C\|f_{N}\|_{H^{s_{1}}}\right)^{k+1}\label{6.01},\\
&&\left\|\partial_{x}(u^{k+1}-u_{N}^{k+1})\right\|_{X_{s_{1},b^{\prime}}}\nonumber\\&&
\leq C\|u^{N}\|_{X_{s_{1},b}}\left[\|u_{N}\|_{X_{s_{1},b}}+
\|u\|_{X_{s_{1},b}}\right]^{k}\leq (4C\|f\|_{H^{s_{1}}})^{k}\|f^{N}\|_{H^{s_{1}}}\label{6.02}.
\end{eqnarray}
Since $f\in H^{s_{1}}(\R)$ and $u\in X_{s_{1},b}$, $\forall \epsilon_{1}>0$
 and sufficiently large  $N>0,$ we have
\begin{eqnarray}
&&\|f^{N}\|_{H^{s_{1}}}\leq\epsilon,\|u^{N}\|_{X_{s_{1},b}}\leq C\epsilon_{1}\label{6.03},\\
&&\left\|\partial_{x}P^{N}(u_{N}^{k+1})\right\|_{X_{s_{1},b^{\prime}}}\leq C\epsilon_{1}\label{6.04},\\
&&\left\|\partial_{x}(u^{k+1}-u_{N}^{k+1})\right\|_{X_{s_{1},b^{\prime}}}\leq C\epsilon_{1}\label{6.05}.
\end{eqnarray}
To prove that $F_{1N}\in C([-\delta,\delta];C^{\infty}(\R)),$
 it suffices to  prove
\begin{eqnarray}
F_{1N}\in C([-\delta,\delta];C^{m}(\R))\label{6.06}
\end{eqnarray}
for arbitrary  $m\in \mathbf{N}$.

\noindent From \cite{YY}, we know that (\ref{1.01})-(\ref{1.02})
possess a unique solution  $u$ on $[-\delta,\delta]$ which can be rewritten
as follows:
\begin{eqnarray}
&&u=\Psi(t)U(t)f-\frac{1}{k+1}\Psi_{\delta}(t)\int_{0}^{t}U(t-\tau)\partial_{x}(u^{k+1})d\tau \label{6.07}
\end{eqnarray}
and
\begin{eqnarray}
&&\|u\|_{X_{s_{1},b}}\leq 2C\|f\|_{H^{s_{1}}}.\label{6.08}
\end{eqnarray}
We define $F_{1N}(x,t)=\Psi(t)U(t)f_{N}-\frac{1}{k+1}\Psi_{\delta}(t)\int_{0}^{t}U(t)(t-\tau)P_{N}(u_{N}^{k+1})d\tau$.
We claim that $F_{1N}(x,t)\in X_{s,b}(s>{\rm max}\left\{m+1,s_{1}\right\})$.
Applying Lemma 3.1 of \cite{YY}, and (\ref{2.01})-(\ref{2.02}), 
\eqref{6.01} to $F_{1N}$ yields
\begin{eqnarray}
\left\|F_{1N}\right\|_{X_{s,b}}&&\leq \left\|\Psi(t)U(t)f_{N}\right\|_{X_{s,b}}+
\left\|\Psi_{\delta}(t)\int_{0}^{t}U(t-\tau)P_{N}\partial_{x}(u_{N}^{k+1})d\tau\right\|_{X_{s,b}}
\nonumber\\&&\leq
 C\|f_{N}\|_{H^{s}}+C\delta^{\frac{\epsilon}{12}}\left\|P_{N}\partial_{x}(u_{N}^{k+1})\right\|_{X_{s,b^{\prime}}}\nonumber\\
&&\leq CN^{s}\|f\|_{L^{2}}+CN^{s-s_{1}}\delta^{\frac{\epsilon}{12}}
\left\|\partial_{x}(u_{N}^{k+1})\right\|_{X_{s_{1},b^{\prime}}}\nonumber\\
&&\leq CN^{s}\|f\|_{L^{2}}+CN^{s-s_{1}}\delta^{\frac{\epsilon}{12}}
\left\|u_{N}\right\|_{X_{s_{1},b}}^{k+1}\nonumber\\
&&\leq CN^{s}\|f\|_{L^{2}}+CN^{s-s_{1}}\delta^{\frac{\epsilon}{12}}
\left\|u\right\|_{X_{s_{1},b}}^{k+1}\nonumber\\
&&\leq CN^{s}\|f\|_{L^{2}}+CN^{s-s_{1}}\delta^{\frac{\epsilon}{12}}
(2C\|f\|_{H^{s_{1}}})^{k+1}.\label{6.09}
\end{eqnarray}
Combining (\ref{6.09}) and Lemma \ref{Lemma3.6}, we have
\begin{eqnarray}
&&F_{1N}\in C([-\delta,\delta];H^{s}(\R)).\label{6.010}
\end{eqnarray}
From (\ref{6.010}), we have
\begin{eqnarray}
&&F_{1N}\in C([-\delta,\delta];C^{m}(\R)).\label{6.011}
\end{eqnarray}
We define $F_{2}^{N}(x,t)=I_{1}+I_{2}+I_{3}.$
Here
\begin{eqnarray}
&&I_{1}=\Psi(t)U(t)f^{N},\label{6.012}\\
&&I_{2}=-\frac{1}{k+1}\Psi_{\delta}(t)
\int_{0}^{t}U(t-\tau)P^{N}\partial_{x}(u_{N}^{k+1})d\tau,\label{6.013}\\
&&I_{3}=-\frac{1}{k+1}\Psi_{\delta}(t)
\int_{0}^{t}U(t-\tau)\partial_{x}(u^{k+1}-(u_{N})^{k+1})d\tau\label{6.014}.
\end{eqnarray}
From (\ref{2.01}) and (\ref{6.01})-(\ref{6.05}),  we have
\begin{eqnarray}
&&\|I_{1}\|_{X_{s_{1},b}}=\left\|\Psi(t)U(t)f^{N}\right\|_{X_{s_{1},b}}
\leq C\|f^{N}\|_{H^{s}}\leq C\epsilon_{1}\label{6.015},\\
&&\left\|I_{2}\right\|_{X_{s_{1},b}}\leq C\delta^{\frac{\epsilon}{12}}
\left\|P^{N}\partial_{x}(u_{N}^{k+1})\right\|_{X_{s_{1},b^{\prime}}}
\leq C\delta^{\frac{\epsilon}{12}}\epsilon_{1}\label{6.016},\\
&&\left\|I_{3}\right\|_{X_{s_{1},b}}\leq C\delta^{\frac{\epsilon}{12}}
\left\|\partial_{x}\left[u^{k+1}-(u^{N})^{k+1}\right]
\right\|_{X_{s_{1},b^{\prime}}}\nonumber\\
&&\qquad\qquad\leq C\delta^{\frac{\epsilon}{12}}\|u^{N}\|_{X_{s_{1},b}}
\left[\|u_{N}\|_{X_{s_{1},b}}+\|u\|_{X_{s_{1},b}}\right]^{k}\nonumber\\
&&\qquad\qquad\leq
 C\delta^{\frac{\epsilon}{12}}(4C\|f\|_{H^{s_{1}}})^{k}\epsilon_{1}\label{6.017}.
\end{eqnarray}

This ends the proof of Lemma 6.1.

\begin{Lemma}\label{Lemma6.2}
Let $f\in L^{2}(\R)$ and $N>0.$
Then, we have
\begin{eqnarray}
\lim\limits_{t\longrightarrow0}U(t)f_{N}=f_{N}.\label{6.018}
\end{eqnarray}
\end{Lemma}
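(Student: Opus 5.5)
The plan is to prove the stronger statement
\begin{eqnarray*}
\lim\limits_{t\longrightarrow0}\sup\limits_{x\in \R}\left|U(t)f_{N}(x)-f_{N}(x)\right|=0,
\end{eqnarray*}
working directly on the Fourier side. No maximal function estimate or density argument is needed, because the frequency truncation already makes $\mathscr{F}_{x}f_{N}$ integrable.

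\emph{Step 1 (integrability).} Since $f\in L^{2}(\R)$ and $\mathscr{F}_{x}f_{N}=\chi_{\{|\xi|\leq N\}}\mathscr{F}_{x}f$, the Cauchy--Schwarz inequality gives
\begin{eqnarray*}
\|\mathscr{F}_{x}f_{N}\|_{L_{\xi}^{1}}\leq (2N)^{1/2}\|f\|_{L^{2}}<\infty .
\end{eqnarray*}
Hence $f_{N}$ is a bounded continuous function given pointwise by its Fourier integral, and the same holds for $U(t)f_{N}$ for every $t$, since the multiplier $e^{it\phi(\xi)}$ is unimodular for $\xi\neq0$.

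\emph{Step 2 (reduction to an $x$-independent quantity).} For every $x\in\R$,
\begin{eqnarray*}
\left|U(t)f_{N}(x)-f_{N}(x)\right|=\frac{1}{\sqrt{2\pi}}\left|\int_{|\xi|\leq N}e^{ix\xi}\left(e^{it(\xi^{3}-\xi^{-1})}-1\right)\mathscr{F}_{x}f(\xi)d\xi\right|
\leq \frac{1}{\sqrt{2\pi}}\int_{|\xi|\leq N}\left|e^{it\phi(\xi)}-1\right|\left|\mathscr{F}_{x}f(\xi)\right|d\xi .
\end{eqnarray*}
The right-hand side does not depend on $x$, so it suffices to show that it tends to $0$ as $t\to0$.

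\emph{Step 3 (dominated convergence).} The main obstacle is the singularity of $\phi(\xi)=\xi^{3}-\xi^{-1}$ at $\xi=0$. Because of the term $\xi^{-1}$, the bound $|e^{it\phi(\xi)}-1|\leq|t||\phi(\xi)|$ is not uniform near the origin, so one cannot simply factor out $|t|$. I will instead use the dominated convergence theorem, in the spirit of the splitting used for $I_{11}$ and $I_{12}$ in Lemma \ref{Lemma3.6}:
\begin{itemize}
\item for each fixed $\xi\neq0$, $e^{it\phi(\xi)}-1\to0$ as $t\to0$, and the set $\{\xi=0\}$ has measure zero;
\item the integrand is dominated by $2|\mathscr{F}_{x}f|\chi_{\{|\xi|\leq N\}}$, which lies in $L^{1}$ by Step 1.
\end{itemize}
Hence the integral in Step 2 tends to zero, and this proves (\ref{6.018}), in fact uniformly in $x\in\R$.

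\emph{Optional quantitative version.} Split the integral into $|\xi|\leq\eta$ and $\eta\leq|\xi|\leq N$. The first piece is bounded by $2\|\chi_{\{|\xi|\leq\eta\}}\mathscr{F}_{x}f\|_{L^{1}}\leq 2(2\eta)^{1/2}\|f\|_{L^{2}}$. The second is bounded by $|t|(N^{3}+\eta^{-1})(2N)^{1/2}\|f\|_{L^{2}}$. Choosing $\eta$ small first and then $t$ small gives the same conclusion.
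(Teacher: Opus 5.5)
Your proof is correct and follows the paper's argument: the paper's proof is exactly your \emph{optional quantitative version}, splitting at $|\xi|=\delta_{1}$. It bounds the piece $|\xi|\leq\delta_{1}$ by Cauchy--Schwarz and the piece $\delta_{1}<|\xi|\leq N$ via $|e^{it\phi(\xi)}-1|\leq|t|(N^{3}+\delta_{1}^{-1})$, giving a bound independent of $x$. Your dominated-convergence route is the qualitative form of the same splitting, and it correctly gives convergence uniform in $x$.
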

\noindent{\bf Proof.} $\forall \epsilon_{1}>0,$ there exists
$\delta_{1}<\left(\frac{\epsilon_{1}}{4\|f\|_{L^{2}}}\right)^{2}$
 such that $4\delta_{1}^{\frac{1}{2}}\|f\|_{L^{2}}<\epsilon_{1}.$
By using a direct computation, we have
\begin{eqnarray}
\left|U(t)f_{N}-f_{N}\right|=C\left|\int_{|\xi|\leq N}e^{ix\xi}
\left[e^{it(\xi^{3}-\frac{1}{\xi})}-1\right]\mathscr{F}_{x}f(\xi)d\xi\right|\leq C(I_{1}+I_{2}),\label{6.019}
\end{eqnarray}
where
\begin{eqnarray*}
&&I_{1}=\left|\int_{|\xi|\leq \delta_{1}}e^{ix\xi}
\left[e^{it(\xi^{3}-\frac{1}{\xi})}-1\right]\mathscr{F}_{x}f(\xi)d\xi\right|,\nonumber\\&&
I_{2}=\left|\int_{\delta_{1}<|\xi|\leq N}e^{ix\xi}
\left[e^{it(\xi^{3}-\frac{1}{\xi})}-1\right]\mathscr{F}_{x}f(\xi)d\xi\right|.
\end{eqnarray*}
By using the Cauchy-Schwarz inequality and
 $\left|e^{it(\xi^{3}-\frac{1}{\xi})}-1\right|\leq2$, we have
\begin{eqnarray}
&&I_{1}=\left|\int_{|\xi|\leq \delta_{1}}e^{ix\xi}
\left[e^{it(\xi^{3}-\frac{1}{\xi})}-1\right]\mathscr{F}_{x}f(\xi)d\xi\right|\nonumber\\&&
\quad\leq 2\int_{|\xi|\leq \delta_{1}}|\mathscr{F}_{x}f(\xi)|d\xi\leq 2\delta_{1}^{\frac{1}{2}}\left[\int_{|\xi|\leq \delta_{1}}|\mathscr{F}_{x}f(\xi)|^{2}d\xi\right]^{\frac{1}{2}}<\frac{\epsilon_{1}}{2}.\label{6.020}
\end{eqnarray}
By using the Cauchy-Schwarz inequality and
$$\left|e^{it(\xi^{3}-\frac{1}{\xi})}-1\right|\leq
|t|\left[|\xi|^{3}+\frac{1}{|\xi|}\right]\leq |t|\left[N^{3}+\frac{1}{\delta_{1}}\right],$$
when $|t|\leq\frac{2\delta_{1}^{\frac{3}{2}}}{N^{\frac{1}{2}}(N^{3}\delta_{1}+1)}$,
we have
\begin{eqnarray}
&&I_{2}=\left|\int_{\delta_{1}<|\xi|\leq N}e^{ix\xi}
\left[e^{it\xi^{2}}-1\right]\mathscr{F}_{x}f(\xi)d\xi\right|\nonumber\\&&
\leq |t|\left[N^{3}+\frac{1}{\delta_{1}}\right]\int_{\delta_{1}<|\xi|\leq N}|\mathscr{F}_{x}f(\xi)|d\xi\nonumber\\
&&\leq |t|\left[N^{3}+\frac{1}{\delta_{1}}\right]N^{\frac{1}{2}}
\left[\int_{\delta_{1}<|\xi|\leq N}|\mathscr{F}_{x}f(\xi)|^{2}d\xi\right]^{\frac{1}{2}}\nonumber\\
&&\leq 2\delta_{1}^{\frac{1}{2}}\|f\|_{L^{2}}<\frac{\epsilon_{1}}{2}.\label{6.021}
\end{eqnarray}
Combining (\ref{6.019}) with (\ref{6.020})-(\ref{6.021}), we have
\begin{eqnarray}
\left|U(t)f_{N}-f_{N}\right|<\epsilon_{1}\label{6.022}.
\end{eqnarray}

This ends the proof of Lemma 6.2.

\begin{Lemma}\label{Lemma6.3}
Let $f\in L^{2}(\R)$.
Then, we have
\begin{eqnarray}
\lim\limits_{t\longrightarrow0}\Psi(t)U(t)f_{N}=f_{N}.\label{6.023}
\end{eqnarray}
\end{Lemma}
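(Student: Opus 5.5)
The plan is to reduce the statement to Lemma \ref{Lemma6.2} by controlling the cutoff factor $\Psi(t)$ separately. Since $\Psi\in C^{\infty}(\R)$ and $\Psi(t)=1$ for $|t|\leq 1$, for every $t$ with $|t|\leq 1$ we have exactly $\Psi(t)U(t)f_{N}=U(t)f_{N}$. Hence only the behaviour of $U(t)f_{N}$ as $t\to0$ matters, and the limit in \eqref{6.023} agrees with the limit in \eqref{6.018}.

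Concretely, I would write
\begin{eqnarray*}
\left|\Psi(t)U(t)f_{N}-f_{N}\right|\leq |\Psi(t)|\left|U(t)f_{N}-f_{N}\right|+|\Psi(t)-1||f_{N}|.
\end{eqnarray*}
The second term vanishes identically for $|t|\leq 1$. The first term is bounded by $|U(t)f_{N}-f_{N}|$, because $0\leq\Psi\leq1$. Given $\epsilon_{1}>0$, I would choose $\delta_{1}$ as in the proof of Lemma \ref{Lemma6.2}. Then for
\[
|t|\leq\min\left\{1,\frac{2\delta_{1}^{3/2}}{N^{1/2}(N^{3}\delta_{1}+1)}\right\}
\]
the bound \eqref{6.022} gives $|\Psi(t)U(t)f_{N}-f_{N}|<\epsilon_{1}$ uniformly in $x\in\R$. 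Here $f_{N}$ is pointwise well defined, since $\mathscr{F}_{x}f\chi_{\{|\xi|\leq N\}}\in L^{1}$ by Cauchy--Schwarz.

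The only point requiring care is the low-frequency singularity of the phase $\xi^{3}-\xi^{-1}$ at $\xi=0$, where $|e^{it\phi(\xi)}-1|$ is not small uniformly in $\xi$. This is already handled in Lemma \ref{Lemma6.2} by splitting off $|\xi|\leq\delta_{1}$ and using the $L^{2}$ bound there. So nothing new arises, and I expect no genuine obstacle. The convergence obtained is in fact uniform in $x$, which is stronger than the pointwise statement.
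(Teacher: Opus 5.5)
Your proposal is correct and follows essentially the paper's route: a triangle inequality that reduces everything to Lemma \ref{Lemma6.2}, together with the fact that $\Psi(t)=1$ near $t=0$. The only cosmetic difference is where the factor $\Psi(t)-1$ goes. The paper attaches it to $U(t)f_{N}$ and bounds that term by $\|U(t)f_{N}\|_{H^{1}}\leq CN\|f\|_{L^{2}}$, implicitly using $H^{1}\hookrightarrow L^{\infty}$. You attach it to $f_{N}$ and note that it vanishes identically for $|t|\leq 1$, so that extra bound is not needed.
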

\noindent{\bf Proof.} By using the triangle inequality and Lemma \ref{Lemma6.2}, we have
\begin{eqnarray}
&&\lim\limits_{t\longrightarrow0}\left|\Psi(t)U(t)f_{N}-f_{N}\right|
\leq \lim\limits_{t\longrightarrow0}\left|(\Psi(t)-1)U(t)f_{N}\right|+
\lim\limits_{t\longrightarrow0}\left|U(t)f_{N}-f_{N}\right|\nonumber\\
&&\leq \lim\limits_{t\longrightarrow0}\left|(\Psi(t)-1)U(t)f_{N}\right|
\nonumber\\
&&\leq\lim\limits_{t\longrightarrow0}\left|(\Psi(t)-1)\right|
\left\|U(t)f_{N}\right\|_{H^{1}}\nonumber\\
&&\leq CN\lim\limits_{t\longrightarrow0}\left|(\Psi(t)-1)\right|\|f\|_{L^{2}}
=0.
\end{eqnarray}

This ends the proof of Lemma 6.3.

Now we use Lemmas 6.1-6.3  to prove Theorem 1.3.

\noindent{\bf Proof of Theorem 1.3.}

Obviously, by using Lemmas \ref{Lemma6.1}, \ref{Lemma6.3}, we have
\begin{eqnarray}
&&\lim_{t\rightarrow0}|u-f|\leq \lim_{t\rightarrow0}
|F_{1N}-f_{N}|+\lim_{t\rightarrow0}|F_{2}^{N}-f^{N}|\nonumber\\
&&\leq \lim_{t\rightarrow0}|F_{1N}-\Psi(t)U(t)f_{N}|+\lim_{t\rightarrow0}
|\Psi(t)U(t)f_{N}-f_{N}|+|F_{2}^{N}-f^{N}|\nonumber\\
&&\leq \lim_{t\rightarrow0}|F_{2}^{N}|+|f^{N}|.\label{6.024}
\end{eqnarray}
For $\lambda>0$, by using the Chebyshev inequality and Sobolev embedding
 $H^{s_{1}}(\R)\hookrightarrow L^{4}(\R)(s_{1}\geq\frac{1}{4})$ as well as Lemma \ref{Lemma6.1}, we have
\begin{eqnarray}
&&\left|\left\{x\in \R:\lim\limits_{t\rightarrow0}|u-f|>\lambda\right\}\right|\leq \left|\left\{x\in \R:\sup_{0<t<\delta}|F_{2}^{N}|>\frac{\lambda}{2}\right\}\right|\nonumber\\
&&+\left|\left\{x\in \R:|f^{N}|>\frac{\lambda}{2}\right\}\right|
\leq C\lambda^{-4}\left\|\sup_{0< t<\delta}|F_{2}^{N}|\right\|_{L_{x}^{4}}^{4}
+C\lambda^{-4}\left\|f^{N}\right\|_{L_{x}^{4}}^{4}\nonumber\\
&&\leq C\lambda^{-4}\left\|F_{2}^{N}\right\|_{L_{x}^{4}L_{t}^{\infty}}^{4}
+C\lambda^{-4}\left\|P^{N}f\right\|_{H^{s_{1}}}^{4}\nonumber\\
&&\leq C\lambda^{-4}\left\|F_{2}^{N}\right\|_{X_{s_{1},b}}^{4}
+C\lambda^{-4}\left\|P^{N}f\right\|_{H^{s_{1}}}^{4}\nonumber\\
&&\leq C\lambda^{-4}\epsilon_{1}^{4}.\label{6.025}
\end{eqnarray}
From (\ref{6.025}), we have
\begin{eqnarray}
\left|\left\{x\in \R:\lim\limits_{t\rightarrow0}|u-f|>\lambda\right\}\right|=0.
\end{eqnarray}
Since $\lambda$  is arbitrary, we have
\begin{eqnarray}
\lim\limits_{t\longrightarrow 0}u(x,t)=f(x)\label{6.026}.
\end{eqnarray}

This completes the proof of Theorem  1.3.$\hfill\Box$

\bigskip

\section{Proof of Theorem 1.4}
\setcounter{equation}{0}
\setcounter{Theorem}{0}

\setcounter{Lemma}{0}

\setcounter{section}{7}

In this section,  by using the idea of \cite{Compaan},  we will prove Theorem 1.4.

\noindent{\bf Proof of Theorem 1.4.}
By using Duhamel's formula, we can rewrite  the local solution to (\ref{1.01})-(\ref{1.02})  as follows.
\begin{eqnarray}
u=U(t)f+\frac{1}{k+1}\int_{0}^{t}U(t-\tau)\partial_{x}(u^{k+1})d\tau,\label{7.01}
\end{eqnarray}
from Theorem 1.1 of \cite{YY}, we know that when 
$f\in H^{s}(\R)(s\geq \frac{1}{2}-\frac{2}{k+1}+88\epsilon)(k\geq6)$,
(\ref{7.01}) possesses a unique solution.
For $t\in [-\delta,\delta]$, from Lemmas \ref{Lemma2.1} and \ref{Lemma4.1}, we have
$u_{1}:=U(t)f\in H^{s}(\R)$
 and
 $u_{2}:=\frac{1}{k+1}\int_{0}^{t}U(t-\tau)\partial_{x}(u^{k+1})
 d\tau\in X_{\frac{1}{2}+\epsilon,b_{1}}\subset
  C([-\delta,\delta]; H^{\frac{1}{2}+\epsilon}(\R))$ with the aid of Lemma \ref{Lemma3.6}.

This completes the proof of Theorem  1.4.$\hfill\Box$

\bigskip

\section{Proof of Theorem 1.5}
\setcounter{equation}{0}
\setcounter{Theorem}{0}

\setcounter{Lemma}{0}

\setcounter{section}{8}

In this section,  by using the idea of \cite{Compaan},  we will prove Theorem 1.5.

\noindent{\bf Proof of Theorem 1.5.} We define
$b:=\frac{1}{2}+\frac{\epsilon}{24},b_{1}=\frac{1}{2}-\frac{\epsilon}{12}.$
By using Duhamel's formula, we can rewrite  the local solution to (\ref{1.01})-(\ref{1.02})  as follows.
\begin{eqnarray}
u=\Psi(t)U(t)f+\frac{1}{k+1}\Psi\left(\frac{t}{\delta}\right)
\int_{0}^{t}U(t-\tau)\partial_{x}(u^{k+1})d\tau.\label{8.01}
\end{eqnarray}
When $t\in [-\delta,\delta]$, we have
\begin{eqnarray}
u=U(t)f+\frac{1}{k+1}\int_{0}^{t}U(t-\tau)\partial_{x}(u^{k+1})d\tau.\label{8.02}
\end{eqnarray}
We define
\begin{eqnarray}
V(t)=u-U(t)f, F(t)=\|V(t)\|_{L_{x}^{\infty}}.\label{8.03}
\end{eqnarray}
For $k\geq6$, and $s\geq \frac{1}{2}-\frac{2}{k+1}+88\epsilon,$   
by using Theorem \ref{Theorem4}, Lemma \ref{Lemma3.7} and
 $H^{\frac{1}{2}+\epsilon}\hookrightarrow L^{\infty}$,  since  $V(t) \in X_{\frac{1}{2}+\epsilon,b}(b>1/2),$ we have
\begin{eqnarray}
&&|F(t_{1})-F(t_{2})|=|\|V(t_{1})\|_{L_{x}^{\infty}}-\|V(t_{2})\|_{L_{x}^{\infty}}|
\leq \|V(t_{1})-V(t_{2})\|_{L_{x}^{\infty}}\nonumber\\
&&\leq C\|V(t_{1})-V(t_{2})\|_{H^{\frac{1}{2}+\epsilon}}\longrightarrow0
\end{eqnarray}
as $t_{1}\longrightarrow t_{2}.$

This completes the proof of Theorem  1.5.$\hfill\Box$

\bigskip

\section{Proof of Theorem 1.6}
\setcounter{equation}{0}
\setcounter{Theorem}{0}

\setcounter{Lemma}{0}

\setcounter{section}{9}
This section is devoted to establishing Theorem 1.6 with the aid of \cite{YLHHY}.

\begin{Lemma}\label{Lemma9.1}
(Riemann-Lebesgue) Let $g\in L^{1}(\R)$.
Then, we have
\begin{eqnarray}
\lim\limits_{|x|\longrightarrow+\infty}\mathscr{F}_{x}g=0.\label{9.01}
\end{eqnarray}
\end{Lemma}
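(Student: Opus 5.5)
The plan is the classical density argument for the Riemann--Lebesgue lemma, using only that $\mathscr{F}_{x}$ maps $L^{1}(\R)$ into $L^{\infty}(\R)$ with
\begin{eqnarray*}
\|\mathscr{F}_{x}g\|_{L^{\infty}}\leq \frac{1}{\sqrt{2\pi}}\|g\|_{L^{1}}.
\end{eqnarray*}
Here the variable of $\mathscr{F}_{x}g$ is denoted $x$, as in (\ref{9.01}). First I treat the model case where $g=\chi_{[a,b]}$ is the indicator of a bounded interval. A direct computation gives
\begin{eqnarray*}
\mathscr{F}_{x}\chi_{[a,b]}(x)=\frac{1}{\sqrt{2\pi}}\,\frac{e^{-iax}-e^{-ibx}}{ix},
\end{eqnarray*}
whose modulus is at most $\frac{2}{\sqrt{2\pi}|x|}$. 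This tends to $0$ as $|x|\rightarrow+\infty$. By linearity the same decay holds for every step function $\sum_{i=1}^{r}c_{i}\chi_{[a_{i},b_{i}]}$, since a finite sum of terms each bounded by $C_{i}/|x|$ is bounded by $C/|x|$.

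Next I use that step functions with bounded interval supports are dense in $L^{1}(\R)$. This is the $L^{1}$ analogue of the density statement in Lemma 3.1, and is standard measure theory: approximate by simple functions, then approximate finite-measure sets by finite unions of intervals. Given $\epsilon>0$, I choose a step function $h$ with $\|g-h\|_{L^{1}}<\epsilon$. Then for all $x$,
\begin{eqnarray*}
|\mathscr{F}_{x}g(x)|\leq |\mathscr{F}_{x}(g-h)(x)|+|\mathscr{F}_{x}h(x)|\leq \frac{\epsilon}{\sqrt{2\pi}}+\frac{C_{h}}{|x|}.
\end{eqnarray*}
Hence $\limsup_{|x|\rightarrow+\infty}|\mathscr{F}_{x}g(x)|\leq \epsilon/\sqrt{2\pi}$. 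Since $\epsilon>0$ is arbitrary, (\ref{9.01}) follows.

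The only step with any content is the density of step functions in $L^{1}$. I regard it as routine: by inner/outer regularity of Lebesgue measure, each finite-measure set differs from a finite union of intervals by a set of small measure. As an alternative I could use density of $C_{c}^{\infty}(\R)$ in $L^{1}$, as in the proof of Lemma 3.4. An integration by parts then gives $|\mathscr{F}_{x}h(x)|\leq \|h'\|_{L^{1}}/(\sqrt{2\pi}|x|)$, and the same $\epsilon$-argument closes the proof.
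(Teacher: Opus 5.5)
Your proof is correct: it is the standard density argument, giving explicit $O(1/|x|)$ decay for indicators of intervals and step functions, then using density of step functions in $L^{1}$ together with $\|\mathscr{F}_{x}g\|_{L^{\infty}}\leq \frac{1}{\sqrt{2\pi}}\|g\|_{L^{1}}$. The paper gives no proof of its own and simply cites Theorem 1.2 of Stein--Weiss, where the lemma is proved by essentially this same argument.
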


Lemma 9.1 can be found in Theorem 1.2 in \cite{SW}.

\begin{Lemma}\label{Lemma9.2}
Let $g\in H^{s}(\R^{n})(s>\frac{n}{2})$.
Then, we have
\begin{eqnarray}
\lim\limits_{|x|\longrightarrow+\infty}g=0.\label{9.02}
\end{eqnarray}
\end{Lemma}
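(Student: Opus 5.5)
The plan is to reduce the statement to the Riemann--Lebesgue lemma (Lemma \ref{Lemma9.1}). The key fact is that for $s>\frac{n}{2}$ the Fourier transform of $g$ is integrable. By the Cauchy--Schwarz inequality,
\begin{eqnarray*}
\int_{\SR^{n}}|\mathscr{F}_{x}g(\xi)|d\xi\leq \left(\int_{\SR^{n}}\langle\xi\rangle^{-2s}d\xi\right)^{\frac{1}{2}}\left(\int_{\SR^{n}}\langle\xi\rangle^{2s}|\mathscr{F}_{x}g(\xi)|^{2}d\xi\right)^{\frac{1}{2}}\leq C\|g\|_{H^{s}(\SR^{n})}.
\end{eqnarray*}
The first factor is finite exactly because $2s>n$. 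So $\mathscr{F}_{x}g\in L^{1}(\R^{n})$, i.e.\ $g\in\hat{L}^{\infty}$.

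Next I will justify the Fourier inversion formula pointwise. Since $\mathscr{F}_{x}g\in L^{1}\cap L^{2}$, the function
\begin{eqnarray*}
\tilde g(x)=(2\pi)^{-\frac{n}{2}}\int_{\SR^{n}}e^{ix\cdot\xi}\mathscr{F}_{x}g(\xi)d\xi
\end{eqnarray*}
is continuous, and it coincides with $g$ almost everywhere by $L^{2}$ Plancherel inversion. I will therefore identify $g$ with this continuous representative, which is the only sense in which the pointwise limit $\lim_{|x|\to\infty}g$ is meaningful.

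Then $\tilde g(x)=(2\pi)^{-\frac{n}{2}}\int e^{ix\cdot\xi}h(\xi)d\xi$ with $h=\mathscr{F}_{x}g\in L^{1}$. Up to the reflection $x\mapsto -x$ and a constant, this is the Fourier transform of an $L^{1}$ function. The Riemann--Lebesgue lemma, in its $n$-dimensional form (Lemma \ref{Lemma9.1} is stated in one dimension, but the proof in \cite{SW} is dimension-free), gives $\tilde g(x)\to0$ as $|x|\to\infty$. If one prefers not to quote it, the standard argument works directly. Approximate $h$ in $L^{1}$ by some $\varphi\in C_{c}^{\infty}$ with $\|h-\varphi\|_{L^{1}}<\epsilon$. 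The inverse transform of $\varphi$ is Schwartz, hence tends to zero at infinity. The error is bounded uniformly in $x$ by $\|h-\varphi\|_{L^{1}}$.

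The only delicate point is the identification of $g$ with its continuous representative. I will make it explicit in the statement of the proof: the limit is understood for the continuous version of $g$, which is available because $\mathscr{F}_{x}g\in L^{1}$. The Sobolev step and the Riemann--Lebesgue step are both routine.
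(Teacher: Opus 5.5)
Your proposal is correct and follows the same route as the paper: show $\mathscr{F}_{x}g\in L^{1}(\R^{n})$ from $s>\frac{n}{2}$, then apply the Riemann--Lebesgue lemma (Lemma \ref{Lemma9.1}) to the inversion formula via the reflection $x\mapsto -x$. Your version also makes explicit three points the paper leaves implicit: the Cauchy--Schwarz step, the use of the continuous representative of $g$, and the $n$-dimensional form of Riemann--Lebesgue (Lemma \ref{Lemma9.1} is stated only on $\R$).
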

\noindent {\bf Proof.} Since $g\in H^{s}(\R^{n})(s>\frac{n}{2})$, it follows that
\begin{eqnarray}
\mathscr{F}_{x}g\in L^{1}(\R^{n}).\label{9.03}
\end{eqnarray}
Combining (\ref{9.02}) with (\ref{9.01}), we have that
\begin{eqnarray}
\lim\limits_{|x|\longrightarrow+\infty}g(-x)=\lim\limits_{|x|\longrightarrow+\infty}g(x)=0.\label{9.04}
\end{eqnarray}

This completes the proof of Lemma 9.2.

Now we are in a position to prove Theorem 1.6.
When $f\in H^{s}(\R)(s>\frac{1}{2}-\frac{2}{k+1})(k\geq6),$ by using Lemmas 
\ref{Lemma4.1}, \ref{Lemma3.6}, we have that
\begin{eqnarray}
&&u_{2}=:\frac{1}{k+1}\int_{0}^{t}U(t-\tau)
\partial_{x}(u^{k+1})d\tau \in X_{\frac{1}{2}+\epsilon,\>b_{1}}
\subset C([-\delta,\delta]);H^{\frac{1}{2}+\epsilon}(\R))
\label{9.05}.
\end{eqnarray}
From (\ref{9.05}), we know that
\begin{eqnarray}
\sup\limits_{t\in [-\delta,\delta]}\|u_{2}\|_{H^{\frac{1}{2}+\epsilon}}<+\infty.\label{9.06}
\end{eqnarray}
From (\ref{9.06}), for $\forall t\in[-\delta,\delta],$ we have that
\begin{eqnarray}
u_{2}\in H^{\frac{1}{2}+\epsilon}.\label{9.07}
\end{eqnarray}
Combining  (\ref{9.07}) with Lemma \ref{Lemma9.2}, for $\forall t\in[-\delta,\delta],$ we have that
\begin{eqnarray}
\lim\limits_{|x|\longrightarrow+\infty}u_{2}=0.\label{9.08}
\end{eqnarray}
When $f\in \hat{L}^{\infty}(\R)$,
since
\begin{eqnarray}
\|\mathscr{F}_{x} (U(t)f)\|_{L^{1}}=\|\mathscr{F}_{x}f\|_{L^{1}}=\|f\|_{\hat{L}^{\infty}(\SR)}.\label{9.09}
\end{eqnarray}
From (\ref{9.010}), we have that for $\forall t\in [-\delta,\delta],$ we have
\begin{eqnarray}
U(t)f \in \hat{L}^{\infty}(\R).\label{9.010}
\end{eqnarray}
Combining Lemma \ref{Lemma9.1} with (\ref{9.010}), we have that
\begin{eqnarray}
\lim\limits_{|x|\rightarrow \infty}U(t)f=0.\label{9.011}
\end{eqnarray}
Combining  (\ref{9.08}) with  (\ref{9.011}),  we have that
\begin{eqnarray*}
\lim\limits_{|x|\longrightarrow+\infty}u=0.\label{9.012}
\end{eqnarray*}

We have finished the proof of Theorem 1.6.

\bigskip
\bigskip

\leftline{\large \bf Acknowledgments}
Wei Yan was supported by Natural Science Foundation of Henan province (No. 262300421062)  and Meihua Yang was
 supported by NSFC grants (No. 11971184).

\end{document}